\newcommand{\e}{{\mathrm{e}}}
\newcommand{\Bin}{{\mathrm{Bin}}} 
\newcommand{\BinMix}{{\mathrm{BinMix}}}
\newcommand{\Bino}[1]{\mathrm{Bin}\left(#1\right)}
\newcommand{\BinoMix}[1]{\mathrm{BinMix}\left(#1\right)}
\newcommand{\smallO}[1]{\ensuremath{\mathop{}\mathopen{}{\scriptstyle\mathcal{O}}\mathopen{}\left(#1\right)}}
\newcommand{\reels}{\mathbb{R}} \newcommand{\relatifs}{\mathbb{Z}}
\newcommand{\esp}{\mathbb{E}} \newcommand{\proba}{\mathbb{P}}
\newcommand{\nat}{\mathbb{N}} \newcommand{\pk}{P_K}
 \newcommand{\qk}{Q_K}
\newcommand{\Proba}[1]{\mathbb{P}\left(#1\right)}
\newcommand{\Esp}[1]{\mathbb{E}\left(#1\right)}
\newtheorem{theoreme}{Theorem}[section]
\newtheorem{proposition}[theoreme]{Proposition}
\newtheorem{corollaire}[theoreme]{Corollary}
\newtheorem{assumption}[theoreme]{Assumption}
\newtheorem{example}[theoreme]{Example}
\newtheorem{definition}[theoreme]{Definition} 
\newtheorem{lem}[theoreme]{Lemma}
\newtheorem{remarque}[theoreme]{Remark}
\newtheorem{nota}[theoreme]{Notation}
\newcommand{\SOS}[1]{\ifthenelse{\boolean{DisplaySOS}}{{\textcolor{blue}{\textbf{Note:} [#1]}}}{}} \makeatletter \def\bbordermatrix#1{\begingroup \m@th \@tempdima 4.75\p@ \setbox\z@\vbox{%
		\def\cr{\crcr\noalign{\kern2\p@\global\let\cr\endline}}%
		\ialign{$##$\hfil\kern2\p@\kern\@tempdima&\thinspace\hfil$##$\hfil
			&&\quad\hfil$##$\hfil\crcr \omit\strut\hfil\crcr\noalign{\kern-\baselineskip}%
			#1\crcr\omit\strut\cr}}%
	\setbox\tw@\vbox{\unvcopy\z@\global\setbox\@ne\lastbox}%
	\setbox\tw@\hbox{\unhbox\@ne\unskip\global\setbox\@ne\lastbox}%
	\setbox\tw@\hbox{$\kern\wd\@ne\kern-\@tempdima\left[\kern-\wd\@ne
		\global\setbox\@ne\vbox{\box\@ne\kern2\p@}%
		\vcenter{\kern-\ht\@ne\unvbox\z@\kern-\baselineskip}\,\right]$}%
	\null\;\vbox{\kern\ht\@ne\box\tw@}\endgroup} \makeatother
\title{	A Mathematical Analysis of Memory Lifetime in a simple Network Model
	of Memory}
\author{Pascal Helson\footnote{pascal.helson@inria.fr}} \date{\today}
\begin{document} \maketitle \tableofcontents
	\section*{Abstract} 
	We study the learning of an external signal by a neural network and the time to
	forget it when this network is submitted to noise.
	The presentation of an external stimulus to the recurrent network of binary neurons may change the
	state of the synapses.
	Multiple presentations of a unique signal leads to its learning.
	Then, during the forgetting time, the presentation of other signals (noise) may also modify the synaptic weights.
	We construct an estimator of the initial signal using the synaptic currents
	and define by this way a probability of error.
	In our model, these synaptic currents evolve as Markov chains.
	We study the dynamics of these Markov chains and obtain a lower bound on 
	the number of external stimuli that the network can receive 
	before the initial signal is considered as forgotten
	(probability of error above a given threshold).
	Our results hold for finite size networks as well as in the large size asymptotic.
	Our results are based on a finite time analysis rather than large time asymptotic.
	We finally present numerical illustrations of our results.
	
	\section{Introduction} 
	
	\cite{amit_learning_1994} proposed a model to study the
	memory capacity of neural networks. The main novelty of their work was the online
	learning and forgetting of a sequence of random signals.
	Indeed, in previous models (e.g.~\cite{willshaw_non-holographic_1969}
	or~\cite{hopfield_neural_1982}), signals are stored in a fixed weight matrix. 
	This matrix is determined as a function of signals to learn.
	These models are called associative or attractor neural network (ANN) models: a stimulus is said
	to be stored if its neural representation is an attractor of the neural dynamics.
	The maximum storage capacity of ANN models have been widely studied.
	\cite{gardner1988optimal} computed this capacity for the optimal synaptic weight matrix.
	They showed that maximal storage is obtained for sparse coding.
	Moreover, there has been study of the robustness to noise in the synaptic weight matrix and in
	the initial input. \cite{sommer_bayesian_1998} proposed Bayesian retrieval processes for a
	stochastic version of the Willshaw model.
	However, beyond the maximum number of stimuli learnt, blackout catastrophe (forgetting of all memories)
	appears in ANN models.
	This blackout can be avoided by allowing the plasticity of the synapses.
	
	\indent \cite{amit_learning_1994} proposed the following experimental protocol:
	a neural network, with both binary synapses and binary neurons, receives
	and learns new random stimuli while forgetting the previous ones.
	Every signal may affect the synaptic weights. After a certain amount of time,
	the first stimulus is presented again (priming) and the ability of the network to
	recognize it is questioned: how many stimuli can be presented before it forgets
	the initial signal? To provide an answer, Amit and Fusi
	performed a signal-to-noise ratio
	(SNR) analysis. The signal under consideration is the sum of the synaptic currents onto one neuron
	when the network receives the priming. 
	As~\cite{gardner1988optimal} found in the case of the ANN models,
	Amit and Fusi concluded that the coding of the stimuli needs to be sparse in order to
	optimise the storage capacity.
	They proposed a scaling of the coding level $f$ (probability that a neuron is selective to a signal)
	as a function of the size $N$ of
	the network. According to their retrieval criterion, the optimal coding level is on the order of
	$f \sim \frac{\log(N)}{N}$.
	In the large N asymptotic, what they called the storage capacity is then on the order of $\frac{1}{f^2}$ for depression probabilities proportional to $f$. 
	
	Extensions and approaches different from SNR have then been studied.
	First,~\cite{brunel1998slow} studied a different protocol:
	they fixed the number of random stimuli and presented them randomly multiple times.
	Their analysis relied on the comparison of two quantities: the mean potentiation (MP) and the
	intra-class potentiation (ICP). MP is the mean of synaptic weights.
	ICP is the mean of synaptic weights among synapses involved in the learning of a stimulus.
	Intuitively, when ICP is much larger than MP,
	the trace of a stimulus in the synaptic weights is still non negligible.
	They found two possible loading regimes, a low-loading (resp. high-loading)
	regime with a memory capacity on the order $\frac{1}{f}$ (resp. $\frac{1}{f^2}$).
	\cite{dubreuil_memory_2014} did a deeper analysis of the multiple presentations model of~\cite{brunel1998slow} and the one shot learning model of~\cite{amit_learning_1994}, under the assumption $N$ large and $f$ small.
	Then,~\cite{elliott_memory_2014} considered the mean number of signals presented before
	the synaptic current crosses a fixed threshold: the mean first passage time (MFPT).
	More complex and biologically plausible models have been proposed and analysed numerically
	in the following studies:~\cite{amit_spike-driven_2003,miller_neural_2012,zenke_memory_2014}.
	Finally, to the best of our knowledge, the first article to present a precise way to retrieve
	stimuli is the one of~\cite{amit_precise_2010}.
	In this article, they insisted on the role played by the synaptic correlations
	and proposed a way to compute numerically an approximation of the distributions of the synaptic currents.
	It enables them to introduce a new retrieval criterion based on what they called
	retrieval probabilities.
	
	Inspired by this last article, we study here a statistical test based on the synaptic currents.
	In particular, we study the probability of error associated to this test.
	Such an error has been studied before under some additive assumptions
	on the distribution of synaptic currents.
	\cite{amit_precise_2010} did a numerical analysis with a Gaussian approximation. \cite{dubreuil_memory_2014} gave an analytical result on the probability of no error
	in the large $N$ asymptotic, assuming independence of synapses
	(which leads the synaptic currents to follow Binomial distributions).
	Here, we perform an analytical study of this error without such approximations and we manage to control it
	by extending previous analytical studies of~\cite{amit_learning_1994,amit_precise_2010} on some points.
	First, we give properties of the synaptic current process such as the spectrum of its transition matrix
	(Propositions~\ref{prop-spec-learning} and ~\ref{prop-spec}).
	Moreover, we study the case of multiple presentations of the signal to be learnt.
	Finally, we give in Remark~\ref{rem:main-result_q_fix} and Theorem~\ref{theo:main_result_q_N}
	explicit bounds on the time during which
	a given signal is kept in memory (probability of error below a given threshold).
	These results deal with a broader range of depression probabilities than in the previous
	studies. We summarize our asymptotic results in Remark~\ref{rem:main-result1}.
	
	The rest of the paper is organised as follows.
	We expose the model and the statistical test in Section~\ref{sec-2}.
	After learning one specific signal, the network is submitted to random
	signals responsible for its forgetting.
	The statistical test consists in estimating the initial signal from the pre-synaptic inputs caused
	by priming (using a threshold estimator).
	We measure whether the signal is still in memory by computing the error associated to this test.
	After the formal definition of this error, the main results are presented.
	Then, Section~\ref{sec-disc-analysis} is devoted to deriving a lower bound on the
	maximum number of stimuli one can present while reasonably remembering the
	initial signal.
	This derivation relies on the fact that, asymptotically, as time goes to infinity,
	synaptic currents converge in law to a Binomial mixture (Corollary~\ref{cor:mes-inv-binmix}).
	We assume that, before learning, the synaptic currents follow their stationary distributions.
	Afterwards, the learning phase splits the network in two groups: the neurons activated by the
	signal and the others.
	Then, during the forgetting phase, the laws of the synaptic currents of these two
	groups are shown to remain Binomial mixtures with an explicit dynamics on
	their mixing distributions (Proposition~\ref{prop:dyn-h}).
	In the second part, we evaluate the probability of error of the test and the
	maximum number of stimuli one can present before the test fails
	(Remark~\ref{rem:main-result_q_fix} and Theorem~\ref{theo:main_result_q_N}).
	The computations are based on estimates on the support and on the tail of the mixing distributions.
	Then, we perform numerical simulations in Section~\ref{sec-4}. Finally, technical results
	are proved in the Appendix~\ref{app:proofs}.
	
	\section{The model and the estimator}\label{sec-2} 
	First, we present the neural network model and the protocol followed for learning and forgetting. 
	Then, we define the estimator, derive the equations describing the dynamics of the synaptic
	currents and detail the main assumptions.
	Finally, we present typical numerical simulations at the end of this section.
	
	\subsection{The neural network and the protocol} 
	In order to ease the introduction of the different variables, we suggest the reader to see the model
	as describing an experiment on a person's ability to learn a stimulus.
	In particular, we ask for how long a learnt signal can persist in memory when the person
	is presented some other signals which we termed loosely as noise.
	
	Let us assume that we present a sequence of external stimuli to a network of $N+1$ neurons.
	Thus, we sum over $N$ external synaptic currents to get the total synaptic current onto one neuron.
	We do not study the dynamics of the membrane potential nor the firing rate of neurons,
	but rather we consider their neural activities, $\xi\in \{0,1\}^{N+1}$.
	Hence, the neurons do not have their own dynamics but instead they follow the dynamics
	of the signals.
	We say that the neuron $i$ is selective (resp. not selective) to a signal if its neural response is	
	$\xi^i=1$
	(resp. $\xi^i=0$).
	We assume that a given signal uniquely determines the neural response.
	Therefore, we refer in an equivalent way to stimulus/signal or neural response in the following.
	Signals are assumed to be random and we denote by $(\cdots,\xi_{-1},\xi_{0}, \xi_{1}\cdots)$
	the corresponding sequence.
	We call $t$ the time at which the $t^{th}$ signal after $\xi_0$ is shown.
	We assume that the $\xi_t$s are independent and identically distributed
	(\textit{i.i.d.}) random variables (\textit{r.v.}) in \(\{0,1\}^{N+1}\).
	Moreover, for each \(t\), the components \(\xi^1_t, \cdots, \xi^{N+1}_t\) of \(\xi_t\) are
	themselves \textit{i.i.d.} with Bernoulli distribution with parameter \(f\):
	
	\(
	\qquad \forall i\in \llbracket 1,N+1\rrbracket, \qquad \proba\left(\xi_t^i=1\right)= f
	= 1- \proba\left(\xi_t^i=0\right).
	\)
	
	The synaptic weight from neuron $j$ to neuron $i$ at time $t$ is denoted by $J_t^{ij}$.
	It can only take two values $J_-<J_+$ and we denote by $J_{t}=\{J_t^{ij},i\neq j\}\in
	\{J_-,J_+\}^{N(N+1)}$ the matrix of synaptic weights.
	We consider a plasticity rule which can be viewed as a classic Hebbian rule.
	The law of $J_{t+1}$ only depends on $J_{t}$ and $\xi_t$.
	The corresponding transition probabilities are 
	\begin{itemize} 
		\item
		$\mathbb{P}\left(J_{t+1}^{ij}=J_+|J_t^{ij}=J_-,(\xi_t^i,\xi_t^j)=(1,1)\right) =
		q^+$,
		\item
		$\mathbb{P}\left(J_{t+1}^{ij}=J_-|J_t^{ij}=J_+,(\xi_t^i,\xi_t^j)=(0,1)\right)
		= q_{01}^-$, 
		\item
		$\mathbb{P}\left(J_{t+1}^{ij}=J_-|J_t^{ij}=J_+,(\xi_t^i,\xi_t^j)=(1,0)\right)
		= q_{10}^-$. 
	\end{itemize}
	The transition probabilities not mentioned here and involving the change
	of state of a synaptic weight are set to zero.
	For example, $\mathbb{P}\left(J_{t+1}^{ij}=J_-|J_t^{ij}=J_+,(\xi_t^i,\xi_t^j)=(0,0)\right) = 0$.
	In order to simplify the notation and without loss of generality, we set:
	
	\( 
	\qquad \qquad \qquad J_- = 0  \text{ (weak synapse) and } J_+ = 1 \text{ (strong synapse)}.
	\)
	
	Moreover, in order to avoid critical cases, we also assume that
	\begin{equation}\label{ass-param}
	f,q^-_{01},q^-_{10},q^+\in (0,1]. 
	\end{equation}
	The parameters $q^-_{01}$ and $q^-_{10}$ represent respectively the homosynaptic and heterosynaptic
	depressions, see~\cite{brunel1998slow}. 
	
	We now give the protocol to learn and then forget a signal.
	We denote by $\xi_0$ the signal to be learnt.
	Before presenting it, we assume that the network has received a lot of random signals thereby driving
	the law of the synaptic weights matrix in its ``stable'' state at time $t=-r+1$
	(we prove in Proposition~\ref{prop:mes-inv-total-proc} that there is a unique invariant measure).
	The learning phase consists in performing $r$ presentations of $\xi_0$.
	In order to be consistent with the previous  description, the sequence of presented stimuli is then
	$(\cdots,\xi_{-r},\underbrace{\xi_{0},\cdots,\xi_{0}}_{r\ \text{ times}},  \xi_{1}, \xi_{2}\cdots)$
	that is \(\xi_{t} =  \xi_{0}\) for \(t\in \llbracket-r+1,0\rrbracket\).
	The presentation of the subsequent signals leads to the forgetting of $\xi_0$.
	
	\subsection{Presentation of the estimator}\label{subsec-pres-estimator}
	We study the consistency through time of the response of one neuron to the initial signal. 
	To do so, we consider the previous protocol.
	After the repetitive presentation of $\xi_0$, the signal has left a certain footprint
	in the matrix $J_1$.
	This trace is subsequently erased by the presentation of the following signals.
	How much information from a stimulus learnt is left at time $t$?
	As an answer, we define a probability of error.
	This error is associated to a decision rule based on the projection of $J_t$ on $\xi_0$.
	For the neuron $i$, such a projection at time \(t\) is given by $\sum_{j\neq i}J_{t}^{ij}\xi_0^j$.
	In this framework, neurons are similar.
	Hence, in order to simplify the notation and without loss of generality,
	our study focuses on neuron $i=1$.
	We denote by $h_t$,
	\begin{equation}\label{eq:def-h_t}
	h_t=\sum_{j=2}^{N+1}J_{t}^{1j}\xi_0^j,
	\end{equation}
	the synaptic current onto neuron $1$ when presenting again $\xi_0$ at time $t$.
	In this framework, the initial signal is presented in a fictive way.
	This means that the synaptic weights do not change following this fictitious presentations. 
	Note that the process $\left(h_t\right)_{t\geq 0}$ strongly depends on the initial
	number $K$ of active neurons 
	\begin{align}\label{def:K}
	K = \sum_{j=2}^{N+1} \xi_0^j.
	\end{align}
	We denote by $h_{t,K}$ the process equal in law to $h_t$ knowing $K$:
	$h_{t,K}\overset{\mathcal{L}}{=}\left(h_t|K\right)$.
	The process $h_{t,K}$ is Markovian, see Proposition~\ref{prop-Markov}.
	
	We define a threshold estimator
	$\hat{\xi}: \nat^* \times \llbracket  0,N \rrbracket  \rightarrow \{0,1\}$ such that
	$\hat{\xi}(t,\theta) = \mathbbm{1}_{h_{t}>\theta}$ with associated probability of errors:
	\begin{align*}
	p_e^0(t,\theta) &= \proba\left(\hat{\xi}(t,\theta) = 1\ | \ \xi_0^1=0\right)= \proba\left(h_t > \theta\ | \ \xi_0^1=0\right), \\
	p_e^1(t,\theta) &= \proba\left(\hat{\xi}(t,\theta) = 0\ |
	\ \xi_0^1=1\right) = \proba\left(h_t \leq \theta\ | \ \xi_0^1=1\right).
	\end{align*}
	\begin{nota}\label{not:h-tK-y}
		We denote by
		\(h_t^y \overset{\mathcal{L}}{=}\left(h_t | \
		\xi_0^1=y\right)
		\quad \text{and} \quad
		h_{t,K}^y\overset{\mathcal{L}}{=}\left(h_t|\xi_0^1=y,K\right)
		.\)
	\end{nota}
	\noindent In the following, we shall use the plural "distributions of $h_{t,K}^y$"
	to say distributions of $h_{t,K}^0$ and $h_{t,K}^1$.
	The probability of error $p_e^0(t,\theta)=\proba\left(h_t^0>\theta\right)$
	(resp. $p_e^1(t,\theta)=\proba\left(h_t^1\leq\theta\right)$) corresponds to the probability that
	the estimator responds positively (resp. negatively) to the priming presented at time $t>0$
	whereas the neuron was not activated (resp. activated) initially.  
	We aim at evaluating these errors: for fixed $\delta\in (0,1)$, we estimate the largest time \(t_*\)
	such that both $p_e^0$ and $p_e^1$ are smaller than \(\delta\) up to time \(t_*\), 
	\begin{equation}\label{eq:deftetoile}
	t_*(\delta,r,N) := \max_{\theta \in \llbracket  0,N \rrbracket} 
	\left(\inf\left\{t\geq 1,\, p_e^0(t,\theta) \vee p_e^1(t,\theta) \geq \delta\right\}\right),
	\end{equation}
	where $x \vee y = \max(x,y)$ and $x \wedge y = \min(x,y)$.
	
	\subsubsection*{Main Results (informal)}
	
	\textit{
		For any fixed error $\delta \in (0,1)$, there is an unbounded set of couples
		$(N,r) \in {\nat^*}^2$ for which we show the existence of a threshold
		$\theta_{\delta,r,N} \in \{ 0, 1, \hdots, N \}$ ensuring 
		\[
		t_{*}(\delta,r,N) \geq \inf\left\{t\geq 1,\, p_e^0(t,\theta_{\delta,r,N}) \vee
		p_e^1(t,\theta_{\delta,r,N}) \geq \delta\right\} \geq \hat{t}(\delta,r,N),
		\]
		where an explicit formula of $\hat{t}$ is given in Remark~\ref{rem:main-result_q_fix} for
		fixed potentiation and depression probabilities.
		Another formula of $\hat{t}$ is given in Theorem~\ref{theo:main_result_q_N}
		for depression probabilities depending on $N$.
		In particular, assuming that the depression probabilities are proportional to the coding level $f$, we obtain that $\hat{t}(\delta,r,N)$ is on the order of $\frac{1}{f^2}$.
	}
	
	\noindent The proofs of these results rely on the study of the Markov chains
	$\left(h_{t,K}\right)_{t\geq 1}$ and $\left(h_{t,K}^y\right)_{t\geq 1}$.
	\begin{proposition}\label{prop-Markov}
		The chains $\left(h_{t,K}\right)_{t\geq 1}$ and $\left(h_{t,K}^y\right)_{t\geq 1}$ are Markovian.
		At the end of the learning phase, we have
		\begin{align}\label{def-h-t0-Bin} 
		\begin{split}
		h_{1,K} \overset{\mathcal{L}}{=} h_{-r+1,K} &+ \xi^1_{0}\Bin\left(K-h_{-r+1,K},
		1-(1-q^+)^r\right)
		\\
		&-(1-\xi^1_{0})\Bin\left(h_{-r+1,K}, 1-(1-q_{01}^-)^r\right)
		\end{split}
		\end{align}
		where, conditionally on $h_{-r+1,K}$, the two Binomial random variables are independent.
		And during the forgetting phase, for all $t\geq 1$:
		\begin{multline}\label{def-h-Bin}
		h_{t+1,K}
		\overset{\mathcal{L}}{=} h_{t,K} + \xi^1_{t}\left[\Bin\left(K-h_{t,K},f
		q^+\right)-\Bin\left(h_{t,K},(1 - f) q_{10}^-\right)\right]\\
		-(1-\xi^1_{t})\Bin\left(h_{t,K},f q_{01}^-\right) 
		\end{multline}
		where, conditionally on $h_{t,K}$, the three Binomial random variables are independent.
		
		The Markov chains $\left(h_{t,K}^y\right)_{t\geq 1}$ for $y\in\{0,1\}$, satisfy the
		equation~\eqref{def-h-t0-Bin} with $\xi_0^1 = y$ and the
		equation~\eqref{def-h-Bin}.
	\end{proposition}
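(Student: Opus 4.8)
The plan is to work not with $h_{t,K}$ directly but with the configuration of the ``relevant'' synapses $\sigma_t := (J_t^{1j})_{j\,:\,\xi_0^j=1}$, an element of $\{0,1\}^K$ once we condition on $K$, for which $h_{t,K} \overset{\mathcal{L}}{=} (|\sigma_t| \mid K)$. The key structural fact is that the plasticity rule is \emph{local}: $J_{t+1}^{1j}$ is a function of $J_t^{1j}$, of $\xi_t^1$, of $\xi_t^j$, and of an independent coin attached to the triple $(1,j,t)$, these coins being i.i.d.\ and independent of all the signals. Consequently, conditionally on $J_t$ and on $\xi_t^1$, the random variables $(J_{t+1}^{1j})_j$ are independent, and — because the $\xi_t^j$ for $j$ with $\xi_0^j=1$ are i.i.d.\ $\mathrm{Bernoulli}(f)$ — the relevant synapses are statistically interchangeable: a relevant synapse in state $J_-$ moves to $J_+$ with a probability depending only on $\xi_t^1$, and likewise for $J_+\to J_-$. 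I would begin by computing these one-synapse transition probabilities, after marginalising over $\xi_t^j$, in the two regimes. During the forgetting phase: a relevant weak synapse turns strong with probability $fq^+$ if $\xi_t^1=1$ and $0$ if $\xi_t^1=0$; a relevant strong synapse turns weak with probability $(1-f)q_{10}^-$ if $\xi_t^1=1$ and $fq_{01}^-$ if $\xi_t^1=0$. During the learning phase, where $\xi_t=\xi_0$ so that $\xi_t^j=1$ for every relevant $j$: if $\xi_0^1=1$ a relevant synapse can only move $J_-\to J_+$ (probability $q^+$ per presentation) and $J_+$ is absorbing; if $\xi_0^1=0$ it can only move $J_+\to J_-$ (probability $q_{01}^-$) and $J_-$ is absorbing.

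To obtain \eqref{def-h-t0-Bin}: since one of the two states is absorbing for every relevant synapse throughout the $r$ presentations, the event ``synapse $j$ has changed state between times $-r+1$ and $1$'' is, conditionally on $J_{-r+1}$, a $\mathrm{Bernoulli}\!\left(1-(1-q^+)^r\right)$ when $\xi_0^1=1$ and a $\mathrm{Bernoulli}\!\left(1-(1-q_{01}^-)^r\right)$ when $\xi_0^1=0$, independently across $j$ because the coins are. Conditioning on $h_{-r+1,K}$ fixes the number $K-h_{-r+1,K}$ of relevant weak synapses and the number $h_{-r+1,K}$ of relevant strong ones; summing the corresponding indicators produces the two Binomials, and since $\xi_0^1\in\{0,1\}$ selects exactly one of them, writing $h_{1,K}=h_{-r+1,K}+\xi_0^1 B_+ - (1-\xi_0^1) B_-$ gives the stated identity. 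The two Binomials $B_+,B_-$ can be coupled to be independent given $h_{-r+1,K}$ precisely because only one of them is ever used.

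To obtain \eqref{def-h-Bin}: conditionally on $\sigma_t$ and on $\xi_t^1$, the number of relevant weak synapses turning strong is $\Bin\!\left(K-|\sigma_t|,\,fq^+\mathbbm{1}_{\xi_t^1=1}\right)$, the number of relevant strong synapses turning weak is $\Bin\!\left(|\sigma_t|,\,(1-f)q_{10}^-\mathbbm{1}_{\xi_t^1=1}+fq_{01}^-\mathbbm{1}_{\xi_t^1=0}\right)$, and the two are independent because they concern disjoint sets of synapses and disjoint coins; hence $h_{t+1,K}=|\sigma_t|+(\text{up})-(\text{down})$. Specialising to $\xi_t^1=1$ and to $\xi_t^1=0$ and recombining with the indicators $\xi_t^1$ and $1-\xi_t^1$ yields \eqref{def-h-Bin} (the third Binomial being active only when $\xi_t^1=0$). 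The Markov property then follows at once: with $\mathcal{F}_t:=\sigma(\sigma_1,\dots,\sigma_t)\supseteq\sigma(h_{1,K},\dots,h_{t,K})$ and $t\geq 1$, the step-$t$ data $(\xi_t,\text{coins})$ is independent of $\mathcal{F}_t$, so $\proba(h_{t+1,K}=k\mid\mathcal{F}_t)=\psi(\sigma_t)$ with $\psi(s)=\proba(h_{t+1,K}=k\mid\sigma_t=s)$; by the previous computation $\psi(s)$ depends on $s$ only through $|s|$, i.e.\ $\psi(s)=g(|s|)$, so $\proba(h_{t+1,K}=k\mid\mathcal{F}_t)=g(h_{t,K})$ and the tower property gives $\proba(h_{t+1,K}=k\mid h_{1,K},\dots,h_{t,K})=g(h_{t,K})$, with transition kernel read off from \eqref{def-h-Bin}. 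For the chains $h_{t,K}^y$ the same argument is run after further conditioning on $\{\xi_0^1=y\}$: during forgetting $\xi_t^1$ ($t\geq1$) is independent of $\xi_0^1$, so \eqref{def-h-Bin} is unchanged, while \eqref{def-h-t0-Bin} holds verbatim with the value $y$ in place of the random $\xi_0^1$ (using that $\xi_0^1$ is independent of $(h_{-r+1,K},K)$).

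The step I expect to require the most care is the Markov property, and within it the single assertion that the one-step conditional law of $h_{t+1,K}$ collapses to a function of $h_{t,K}$ alone. Everything hinges on two points that must be spelled out cleanly: that the relevant synapses are exchangeable, so a synapse's transition probability does not depend on its label (only on its current value and on $\xi_t^1$); and that the randomness used at step $t$ is fresh, i.e.\ independent of $\mathcal{F}_t$. Once these are in place, the remainder is routine bookkeeping with independent Bernoulli and Binomial variables.
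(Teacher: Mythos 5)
Your proposal is correct and follows essentially the same route as the paper's proof: restrict attention to the $K$ relevant synapses $J_t^{1j}$ with $\xi_0^j=1$, observe that given $\xi_t^1$ and the current state each of them flips independently with the marginal probabilities $f q^+$, $(1-f)q_{10}^-$, $f q_{01}^-$ (and $1-(1-q^+)^r$, $1-(1-q_{01}^-)^r$ over the $r$ learning presentations, using that one state is absorbing), and sum the resulting Bernoullis into the stated Binomials. The only difference is that you make explicit, via the coin representation and the filtration $\mathcal{F}_t$, the exchangeability and fresh-randomness arguments underlying the Markov property, which the paper asserts without elaboration; this is a welcome but not substantively different addition.
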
 
	\begin{proof}
		In order to study the jump from $h_{t,K}$ to $h_{t+1,K}$, we count the synapses
		that potentiate and the ones that depress upon presenting a signal $\xi_t$.
		From the definitions~\eqref{eq:def-h_t} and~\eqref{def:K} of $h_t$ and $h_{t,K}$,
		we only need to consider the $K$ synapses $J_t^{1j}$ with $j\geq 2$ such that $\xi_0^j=1$.
		At time $t$, there are $h_{t,K}$ strong synapses and $K-h_{t,K}$ weak synapses.
		Given $\xi_t^1$ and $h_{t,K}$, every synapse evolves independently following a Bernoulli law. 
		
		From time $-r+1$ to $1$, if $\xi_0^1=0$, every strong synapse is $r$ times candidate to
		depression so it has probability \(1 - (1-q_{01}^-)^r\) to depress.
		If $\xi_0^1=1$, every weak synapse is $r$ times candidate to potentiation so it has probability 
		$1 - (1-q^+)^r$ to potentiate.
		Equation~\eqref{def-h-t0-Bin} follows.
		
		From time $t\geq 1$ to $t+1$, if $\xi_t^1=0$,
		the probability that a strong synapse depresses is $f q_{01}^-$.
		If $\xi_t^1=1$, the probability that a weak synapse potentiates is $fq^+$ and
		the probability that a strong synapse depresses is $(1-f) q_{10}^-$.
		Equation~\eqref{def-h-Bin} follows.
		
		By definition of $h_{t,K}^y$, the chain satisfies the equations~\eqref{def-h-t0-Bin}
		with $\xi_0^1 = y$ and \eqref{def-h-Bin}.
	\end{proof}
	
	\begin{corollaire}\label{prop-conv-mes-inv}
		Assume that~\eqref{ass-param} holds.
		Then, for all $K \in \llbracket0,N\rrbracket$, the Markov chain \((h_{t,K})_{t\geq 1}\)
		admits a unique invariant measure \(\pi_K\) with support in $\llbracket0,K\rrbracket$.
		Moreover, for any initial condition \(h_{0,K}\), the Markov chain \((h_{t,K})_{t \geq 1}\)
		converges in law to \(\pi_K\).
		In addition, the chain $\left(h_t\right)_{t\geq 1}$ converges in law to
		$\pi_{\infty} = \sum_{K=0}^{N} \Proba{\hat{K} = K} \pi_{K}$ where
		$\hat{K} = \sum_{j=2}^{N+1} \xi_0^j$.
	\end{corollaire}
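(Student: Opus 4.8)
The plan is to fix $K\in\llbracket0,N\rrbracket$, view $(h_{t,K})_{t\geq 1}$ as a time-homogeneous Markov chain on the \emph{finite} state space $\llbracket0,K\rrbracket$ — legitimate because by~\eqref{def-h-Bin} the potentiation term adds at most $K-h_{t,K}$ and the two depression terms subtract at most $h_{t,K}$, so the value never leaves $\llbracket0,K\rrbracket$ — and then to establish a one-step Doeblin (uniform minorization) condition towards the single state $K$, from which existence, uniqueness and convergence follow by the standard coupling argument. The mixture statement for $(h_t)_{t\geq1}$ will then be a one-line conditioning on $\hat K$.

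Concretely, I would show that there is $\varepsilon>0$ with $\Proba{h_{t+1,K}=K\mid h_{t,K}=h}\geq\varepsilon$ for every $h\in\llbracket0,K\rrbracket$. Using~\eqref{def-h-Bin} and the strict positivity~\eqref{ass-param} of the parameters: on the event $\{\xi_t^1=1\}$ (probability $f>0$), let all the $\Bin(K-h,fq^+)$ candidates potentiate and none of the $\Bin(h,(1-f)q_{10}^-)$ candidates depress; conditionally on $h_{t,K}$ and $\xi_t^1$ these Binomials are independent, so this joint event has probability at least $(fq^+)^{K-h}\bigl(1-(1-f)q_{10}^-\bigr)^{h}>0$ (note $(1-f)q_{10}^-<1$ since $f>0$), and on it $h_{t+1,K}=K$. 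Taking the minimum over the finitely many values of $h$ gives $\varepsilon:=f\cdot\min_{0\leq h\leq K}(fq^+)^{K-h}\bigl(1-(1-f)q_{10}^-\bigr)^{h}>0$. This minorization means that at each step the chain ``regenerates'' at $K$ with probability at least $\varepsilon$, independently of its current position; the standard coupling argument for Doeblin chains then shows $(h_{t,K})_{t\geq1}$ has a unique invariant measure $\pi_K$ (supported in the state space $\llbracket0,K\rrbracket$) and that $\mathcal{L}(h_{t,K})$ converges to $\pi_K$ (geometrically in total variation) from any initial condition.

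For the last assertion, recall that by definition $(h_t\mid\hat K=K)$ has the law of $h_{t,K}$, hence for every $m\in\llbracket0,N\rrbracket$
\[
\Proba{h_t=m}=\sum_{K=0}^{N}\Proba{\hat K=K}\,\Proba{h_{t,K}=m}.
\]
Letting $t\to\infty$, each of the finitely many terms converges, by the previous step, to $\Proba{\hat K=K}\,\pi_K(\{m\})$, so $\Proba{h_t=m}\to\pi_\infty(\{m\})$ for every $m$; on the finite space $\llbracket0,N\rrbracket$ this is exactly convergence in law of $(h_t)_{t\geq1}$ to $\pi_\infty$.

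The argument is essentially routine and I do not expect a genuine obstacle; the only point worth a moment's care is that the minorizing state must be taken to be $K$ (rather than, say, $0$), because when $f=1$ the chain~\eqref{def-h-Bin} degenerates into a pure-birth chain absorbed at $K$, so the states below $K$ are transient and only $K$ is visited by every trajectory. Choosing $K$ keeps the minorization valid across the whole admissible parameter range~\eqref{ass-param}, including that boundary case and the trivial case $K=0$ (where $h_{t,0}\equiv0$ and $\pi_0$ is the point mass at $0$).
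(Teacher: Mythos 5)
Your proof is correct, and for the first assertion it takes a genuinely different route from the paper. The paper simply observes that under~\eqref{ass-param} the chain is irreducible and aperiodic on the finite state space $\llbracket0,K\rrbracket$ and invokes the standard ergodic theorem for finite chains; you instead verify an explicit one-step Doeblin minorization $\Proba{h_{t+1,K}=K\mid h_{t,K}=h}\geq f\,(fq^+)^{K-h}\bigl(1-(1-f)q_{10}^-\bigr)^{h}>0$ and conclude by coupling. Your computation of the minorization constant is right (conditional independence of the two Binomials given $\xi_t^1=1$ and $h_{t,K}$, and $(1-f)q_{10}^-<1$ because $f>0$). What your version buys is robustness at the boundary of~\eqref{ass-param}: when $f=1$ the depression term $\Bin(h,(1-f)q_{10}^-)$ vanishes and the chain becomes a pure-birth chain absorbed at $K$, so it is \emph{not} irreducible and the paper's stated justification does not literally apply there, even though the conclusion (unique invariant measure $\delta_K$ and convergence to it) still holds; your minorization toward the state $K$ covers this case uniformly, and in addition yields geometric convergence in total variation rather than bare convergence. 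The price is a slightly longer argument for what is, away from that boundary case, a textbook fact. The second part of your proof (decomposing $\Proba{h_t=m}$ by conditioning on $\hat K$ and passing to the limit term by term in the finite sum) is exactly the paper's argument.
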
 
	\begin{proof} 
		By~\eqref{ass-param}, the Markov chain $\left(h_{t,K}\right)_{t\geq 1}$ is irreducible and 
		aperiodic on a finite state space.
		Thus, it admits a unique invariant measure towards which it converges.
		
		Let $\hat{K} = \sum_{j=2}^{N+1} \xi_0^j$.
		From the Bayes' formula we get that for all $l\in \llbracket 0,N \rrbracket$,
		\begin{align*}
		\lim\limits_{t \rightarrow \infty}\Proba{h_t=l} &= \lim\limits_{t \rightarrow \infty}
		\left(\sum_{K=0}^N\Proba{\hat{K} = K} \Proba{h_{t,K}=l}\right)
		= \sum_{K=0}^{N} \Proba{\hat{K} = K} \pi_{K}(l).
		\end{align*}
	\end{proof}
	
	\begin{remarque}\label{rem:20190821} 
		The Markov chains $\left(h^y_{t,K}\right)_{t\geq 1}$ have the same transition matrix as
		$\left(h_{t,K}\right)_{t\geq 1}$.
		They differ by their distribution at time $t=1$.
		Hence, they all converge to $\pi_K$.
		Moreover, both $\left(h_t^0\right)_{t\geq 1}$ and
		$\left(h_t^1\right)_{t\geq 1}$ converge in law to $\pi_{\infty}$.
	\end{remarque}
	
	\begin{proposition}\label{prop:mes-inv-total-proc}
		Under the assumption~\eqref{ass-param}, the process $\left(\xi_{t},J_t\right)_{t\geq 1}$
		converges to its unique invariant measure.
		We denote it by $\rho_{\infty}$.
	\end{proposition}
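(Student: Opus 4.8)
The plan is to recognise $(\xi_t,J_t)_{t\geq 1}$ as a time‑homogeneous Markov chain on the finite state space $\{0,1\}^{N+1}\times\{J_-,J_+\}^{N(N+1)}$ and then to invoke the standard convergence theorem for finite chains. Indeed, given the past, $J_{t+1}$ is obtained from $(J_t,\xi_t)$ by flipping each synapse independently according to the stated transition probabilities, while $\xi_{t+1}$ is drawn afresh and independently of everything; hence the law of $(\xi_{t+1},J_{t+1})$ given the history depends only on $(\xi_t,J_t)$, and for $t\geq 1$ the $\xi_t$ are i.i.d., so the kernel does not depend on $t$. The learning phase merely fixes the distribution of the initial state $(\xi_1,J_1)$ and is irrelevant to the long‑time behaviour. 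For a finite Markov chain an invariant measure always exists, and it is unique and attracts every initial distribution as soon as the chain has a single recurrent communicating class which is aperiodic; I would establish this via the convenient sufficient condition that there is one state $s^\star$ accessible from every state and carrying a self‑loop of positive probability. (Accessibility of $s^\star$ from every state forces every recurrent state to reach $s^\star$; a recurrent class being closed, all recurrent states then lie in the class of $s^\star$, which is therefore the unique recurrent class, and the self‑loop makes it aperiodic.)

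The one point needing an argument is the existence of such an $s^\star$. When $f<1$, I would take $s^\star$ to be the configuration ``all neurons silent, all synapses weak''. Starting from an arbitrary state, present in turn the $N+1$ one‑hot activity patterns $e_1,\dots,e_{N+1}$ and then the all‑zero pattern; each of these has positive probability since $f\in(0,1)$. Under the pattern $e_k$, a synapse $J^{kj}$ (resp.\ $J^{ik}$) is a candidate for depression through $(\xi^k,\xi^j)=(1,0)$ (resp.\ $(\xi^i,\xi^k)=(0,1)$) with probability $q^-_{10}>0$ (resp.\ $q^-_{01}>0$), while no synapse can be potentiated, potentiation requiring two active neurons; hence with positive probability every synapse incident to $k$ becomes weak, and it stays weak under all later one‑hot patterns. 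After the pattern $e_{N+1}$ every synapse has been turned weak, and the terminal all‑zero pattern leaves the matrix untouched while setting the activity to the silent pattern, so $s^\star$ is reached. The same computation shows $s^\star$ loops to itself with probability $(1-f)^{N+1}>0$.

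When $f=1$ the signal is constantly the all‑ones pattern, only potentiation can occur, and the configuration ``all neurons active, all synapses strong'' is reached from any matrix in a single step with probability $(q^+)^{m}>0$, where $m$ is the number of currently weak synapses, and is absorbing; this serves as $s^\star$. In both cases the chain converges in law, from any initial condition, to its unique invariant measure, which we denote $\rho_\infty$. I do not expect a serious obstacle here; the only subtlety is that for $N+1>2$ no single stimulus can depress all synapses simultaneously — there is no activity pattern under which every ordered pair is ``split'' — which is exactly why the construction depresses the synapses neuron by neuron, exploiting that a weak synapse cannot be re‑potentiated while at most one neuron is active.
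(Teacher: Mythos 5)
Your proof is correct and takes essentially the same route as the paper, whose entire printed proof is ``same argument as for Corollary~\ref{prop-conv-mes-inv}'', i.e.\ the ergodic theorem for an irreducible aperiodic chain on a finite state space. You additionally supply the accessibility construction that the paper leaves implicit, and your weaker sufficient condition (a single accessible aperiodic recurrent class rather than irreducibility) is in fact the more accurate one, since for $f=1$, which is permitted by~\eqref{ass-param}, the chain is not irreducible---the all-strong configuration is absorbing---a boundary case your argument covers and the paper's one-liner glosses over.
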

	\begin{proof}
		Same argument as for Corollary~\ref{prop-conv-mes-inv}.
	\end{proof}
	We now give the main assumptions.
	\begin{assumption}\label{ass-main} \hspace{2em}
		\begin{enumerate}[label=\ref{ass-main}.\arabic*] \item
			$
			\left(\xi_{0},J_{-r+1}\right) \overset{\mathcal{L}}{=}   \rho_{\infty} \quad
			\text{ and in particular } \quad h_{-r+1,K}, h_{-r+1,K}^0,h_{-r+1,K}^1
			\overset{\mathcal{L}}{=}   \pi_K.
			$
			\label{ass-main:init-cond}
			\item Assume that $f$ depends on $N$.
			Let us denote it by $f_N$ such that \(\lim\limits_{N\infty} f_N = 0\)
			and \(\lim\limits_{N\infty} Nf_N = +\infty\). \label{ass-main:on-f}
			\item Let $q_{01,N}^- = a_Nf_N \text{ and } q_{10,N}^- = b_N f_N$
			with $a_N,b_N: \nat^*\rightarrow \reels$ such that $a_N,\ b_N$ both converge in $[0,+\infty)$.
			However, we assume that at least one of the two limits is not $0$
			and\label{ass-main:on-q10-q01}
			\[
			\lim\limits_{N \infty}
			\ q_{01,N}^- = \lim\limits_{N \infty}\ q_{10,N}^-
			= \lim\limits_{N \infty} \frac{b_N^2}{N f_N a_N}
			= \lim\limits_{N \infty} \frac{b_N}{N f_N}
			= 0, \quad
			\lim\limits_{N \infty} N f_N a_N = +\infty.
			\]
		\end{enumerate} 
	\end{assumption} 
	We consider a general paradigm in which before receiving the stimulus $\xi_0$,
	many stimuli have already been sent \((\cdots, \xi_{-r-2}, \xi_{-r-1}, \cdots)\). 
	We assume that the process $\left(\xi_{t},J_t\right)_{t\leq -r+1}$ has reached
	its invariant measure at time \(t=-r+1\) by Assumption~\ref{ass-main:init-cond}. 
	Then, one key parameter is the coding level $f$.
	We assume that it depends on $N$ in the analysis of the large $N$ asymptotic:
	Assumption~\ref{ass-main:on-f}.
	This assumption refers to sparse coding as $f_N$ tends to $0$.
	An additional constraint put forward is that the mean number of selective neurons,
	$Nf_N$, needs to be large enough: Assumption~\ref{ass-main:on-f}.
	In this context, we are interested to see how the dependence on $N$ of the depression
	probabilities can affect the memory lifetime, see Assumption~\ref{ass-main:on-q10-q01}.
	This assumption gives conditions on the large $N$ asymptotic behaviours of the depression
	probabilities.
	
	\subsubsection*{First illustrations}\label{subsec-first-int} 
	In this subsection, we illustrate the dynamics of $\left(h^y_{t,K}\right)_{t\geq 0}$
	and $\left(h_{t,K}\right)_{t\geq 0}$.
	In particular, we are interested in the effects of the coding level $f_N$ on these synaptic currents.
	Let us assume that the signal $\xi_0$ is of size $K=\lfloor f_NN \rfloor$,
	where the floor function $\lfloor x \rfloor$ is equal to $k\in \relatifs$ if $k\leq x < k+1$.
	Let us have a look at the expected size of jumps of $h_{t,K}$
	from the formulas~\eqref{def-h-t0-Bin}, \eqref{def-h-Bin}.
	\begin{align*}
	\text{For } t=1,\quad &\esp\left[h_{1,K}-h_{-r+1,K}|h_{-r+1,K}, \xi_0^1=0\right] 
	= -h_{-r+1,K}(1 - (1- q_{01}^-)^r),
	\\
	&\esp\left[h_{1,K}-h_{-r+1,K}|h_{-r+1,K}, \xi_0^1=1\right] 
	= (K-h_{-r+1,K})(1 - (1- q^+)^r),
	\\
	\forall t>1,\quad &\esp\left[h_{t+1,K}-h_{t,K}|h_{t,K}\right]
	= (K - h_{t,K})f_N^2q^+ -h_{t,K}f_N(1-f_N)(q_{10}^- + q_{01}^- ).
	\end{align*}
	From these equations, we note that the average jump size strongly depends on $f_N$.
	When $f_N$ is close to $1$, the reception of $\xi_0$ has a large impact on the weight matrix,
	easy to	detect.
	However, the following average jump size are close to the initial one.
	Thus, as soon as some other stimuli are presented, the initial signal is forgotten:
	the distributions of $h_{t,K}^0$ and $h_{t,K}^1$ quickly overlap.
	Conversely, when $f_N$ is close to $0$,
	the average jump size is significantly different between the learning (relatively big jumps)
	and the forgetting (relatively small jumps) phases.
	As a consequence, the convergence to the stationary distribution, and thus forgetting, is slower.
	However, the learning still occurs: the initial jump is still big.
	In order to illustrate these phenomena, we plot simulation results obtained with a high coding level,
	$f_N = 0.8$ in Figure~\ref{image.f-big}, and a low coding level,
	$f_N = 0.1$ in Figure~\ref{image.f-small}.
	
	Figure~\ref{sub:trajectory-f08} shows that the size of jumps is effectively big for $f_N = 0.8$,
	just after learning as well as during forgetting time.
	Figure~\ref{sub:init-f08} illustrates the separation between the initial distributions of
	$h_{t,K}^0$ and $h_{t,K}^1$.
	Indeed, at time $t=-r+1=0$, both $h^0_{0,K}$ and $h^1_{0,K}$ follow
	the invariant measure plotted in black.
	Then, after the reception of $\xi_0$, the distribution of $h^0_{1,K}$ is
	shifted to the left and the distribution of $h^1_{1,K}$ to the right.
	Initially, the signal is learnt because the distributions are well separated,
	see Figure~\ref{sub:init-f08}.
	Figures~\ref{sub:time2-f08} and \ref{sub:time4-f08} exhibit the fast overlapping
	of these two distributions.
	Indeed, following the learning phase, the reception of new stimuli makes
	the two distributions converge back quickly to the invariant distribution.
	At time $t=5$, the signal is already forgotten.
	\begin{figure}[h]
		\centering
		\subfloat[]{
			\includegraphics[width=0.45\textwidth]{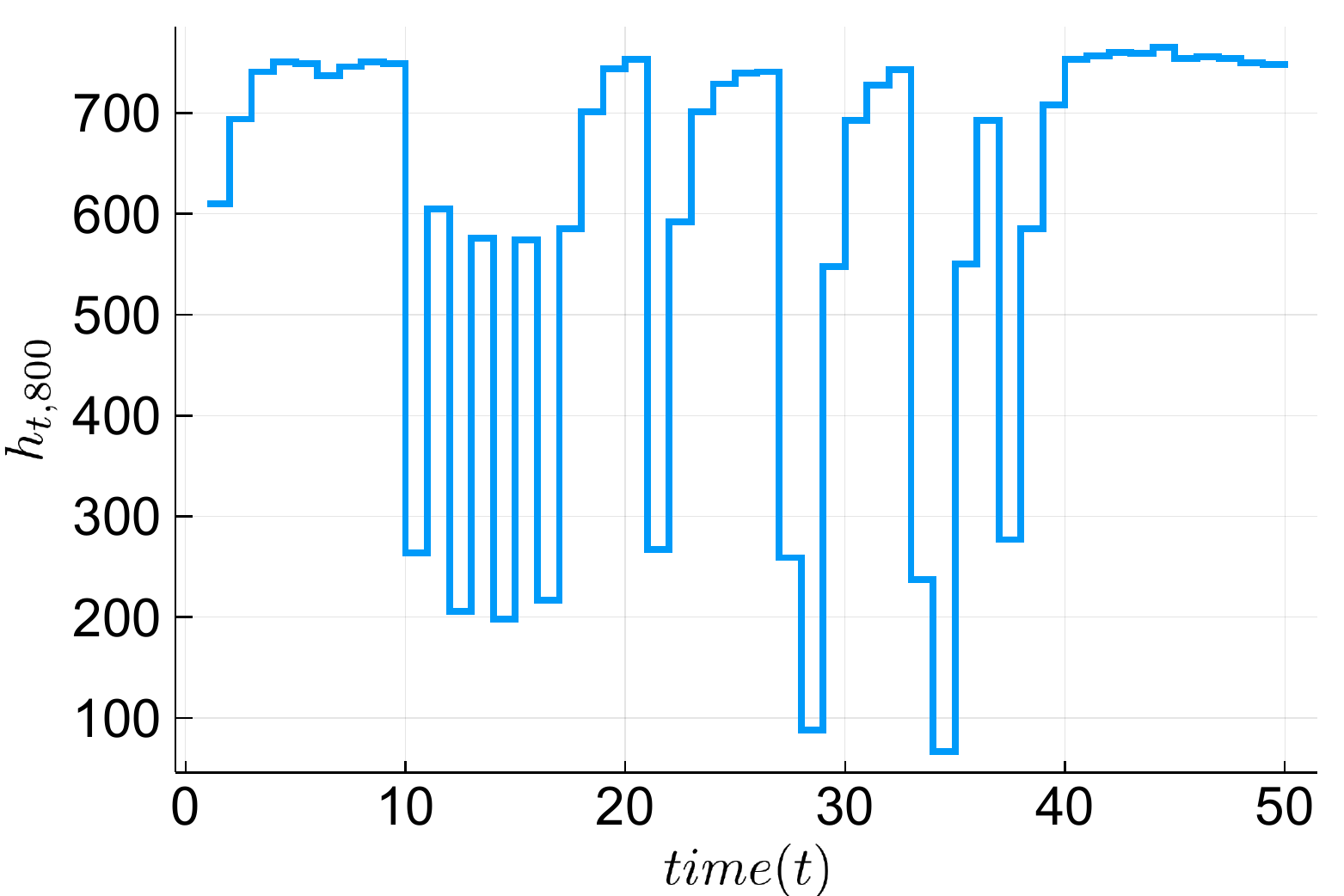} \label{sub:trajectory-f08} }
		\subfloat[]{ \includegraphics[width=0.45\textwidth]{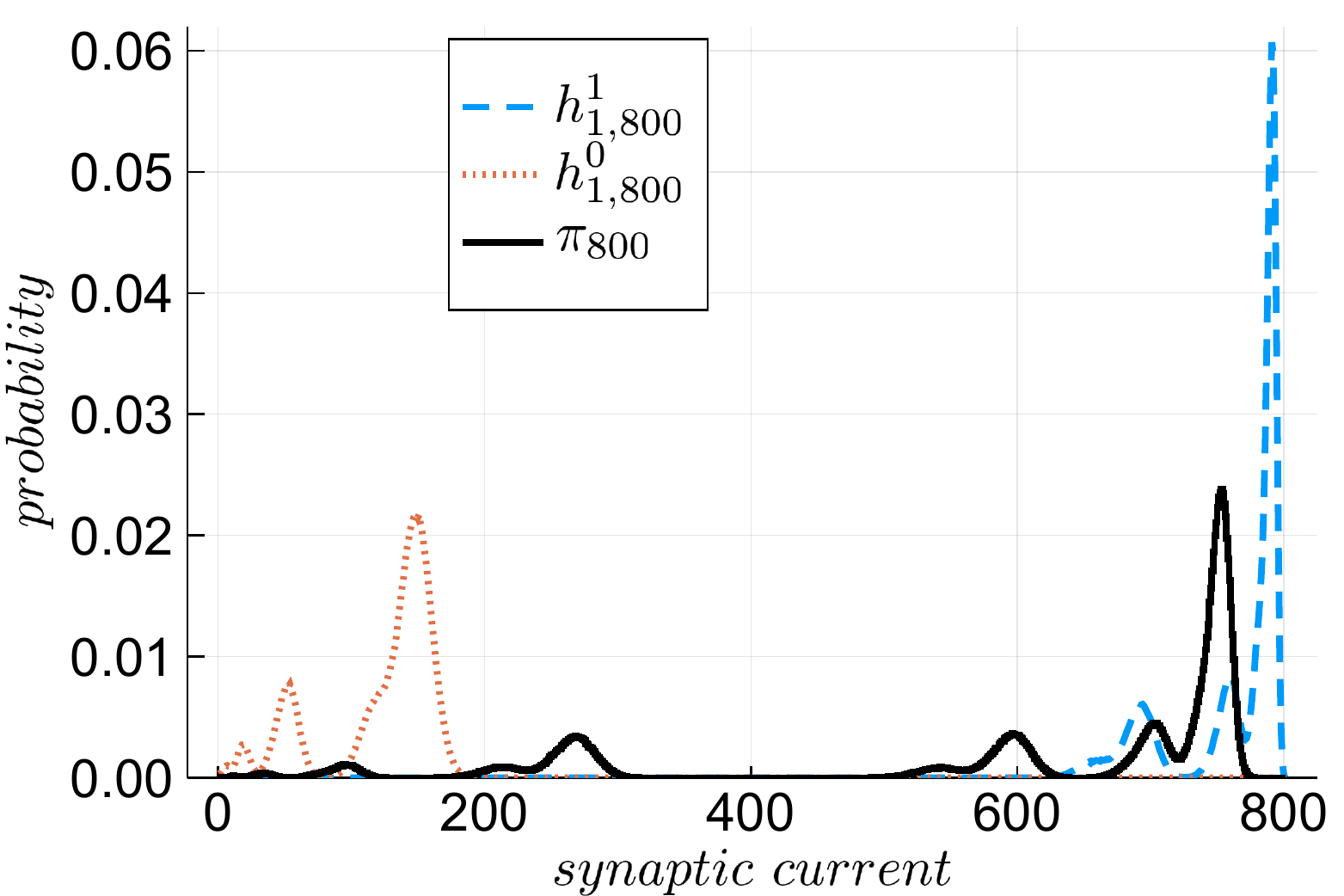} \label{sub:init-f08} }
		\\ 
		\subfloat[]{ \includegraphics[width=0.45\textwidth]{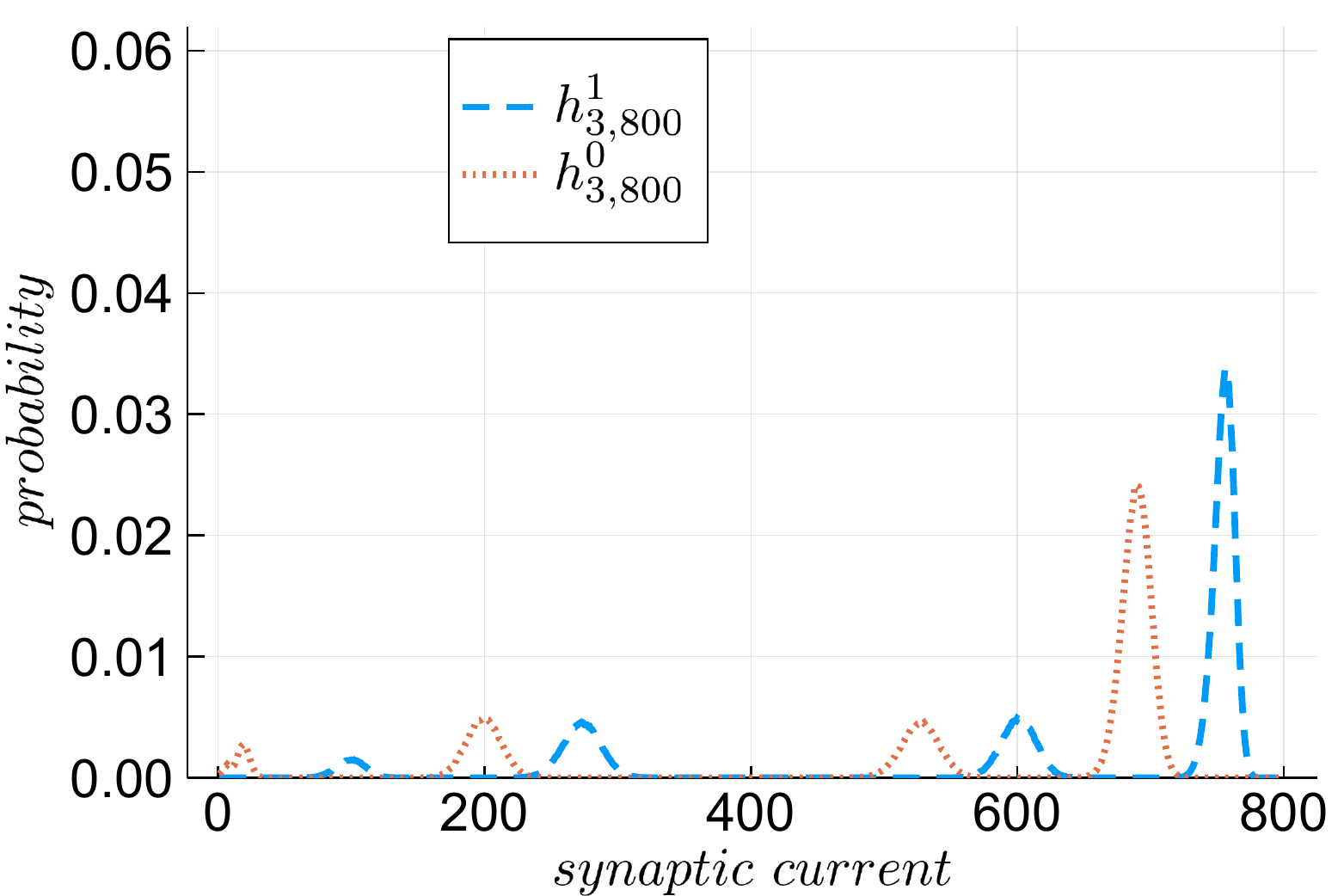}
			\label{sub:time2-f08} } \subfloat[]{
			\includegraphics[width=0.45\textwidth]{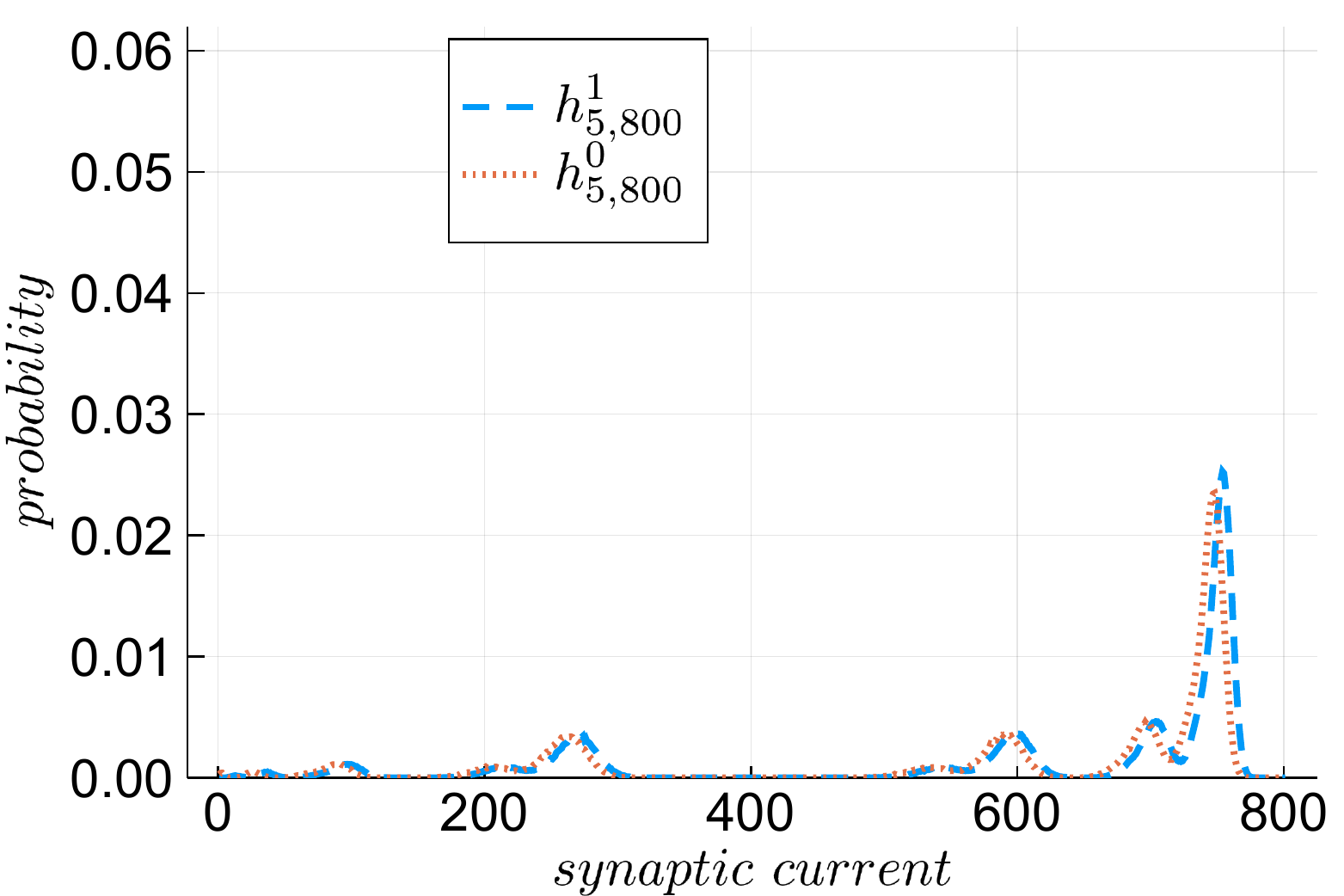} \label{sub:time4-f08} }
		\caption{
			\eqref{sub:trajectory-f08}
			A typical trajectory of $h_{t,800}$.
			\eqref{sub:init-f08}
			The distributions of $h^y_{1,800}$ and the invariant measure $\pi_{800}$.
			\eqref{sub:time2-f08},\eqref{sub:time4-f08}
			The distributions of $h^y_{t,800}$ at time $t=3$ and $t=5$.
			Parameters: $r=1$, $N = 1000$, $K=800$, $f_N=0.8$, 
			$q^+ = 0.8,\ q^-_{01} = 0.8$ and $q^-_{10} = 0.2$.
		}\label{image.f-big} 
	\end{figure}
	Figure~\ref{image.f-small} illustrates the advantages of a low coding level.
	Indeed, even at time $t=20$, the two distributions do not overlap a lot
	and they remained uni-modal.
	This makes the choice of a threshold estimator reasonable.
	Moreover, such an estimator allows a tractable analysis.
	\newpage
	\begin{figure}[h] 
		\centering
		\subfloat[]{ \includegraphics[width=0.45\textwidth]{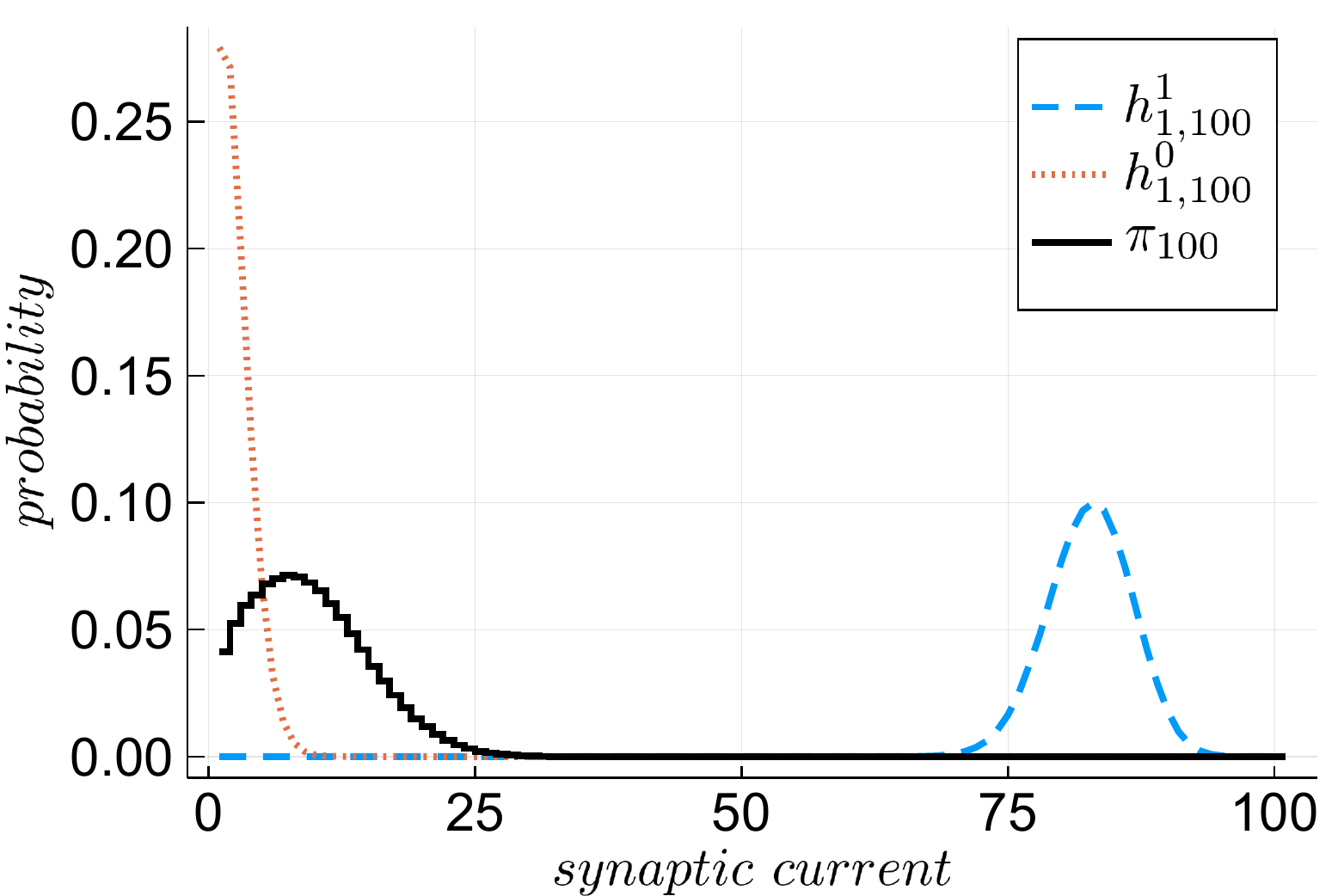}
			\label{sub:init-f01-modif-ink} } \subfloat[]{
			\includegraphics[width=0.45\textwidth]{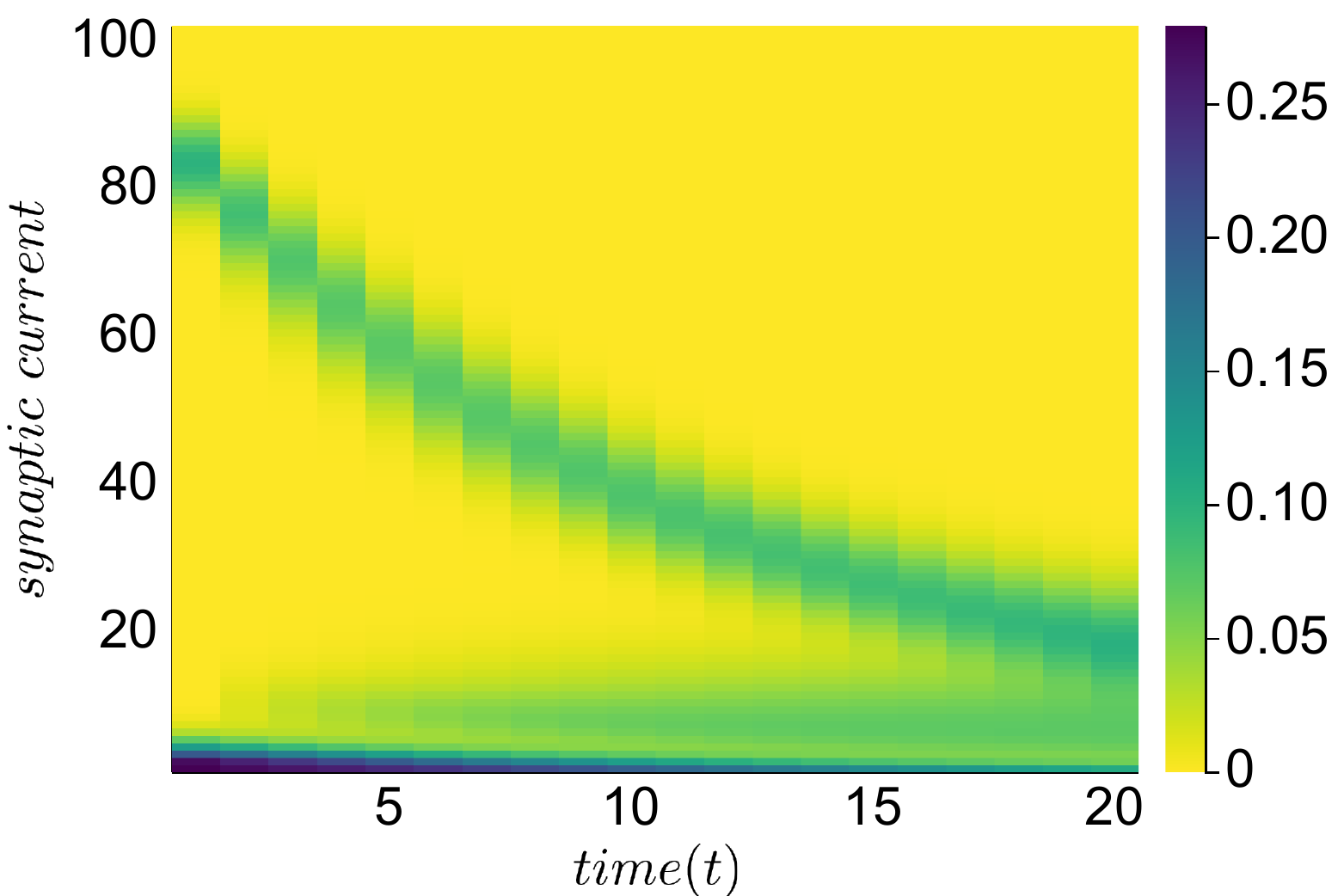} \label{sub:time20-f01} }
		\caption{
			\eqref{sub:init-f01-modif-ink} 
			The distributions of $h^y_{t,K}$ at time $t=1$ and the invariant measure $\pi_{100}$.
			\eqref{sub:time20-f01}
			The sum of the two distributions $h^0_{t,K}$ and $h^1_{t,K}$ for $t\in [1,20]$.
			The colour bar gives the probability values.
			Parameters: $r=1$, $N = 1000$, $K=100$, $f_N=0.1,\ q^+ =
			0.8,\ q^-_{01} = 0.8,$ and $ q^-_{10} = 0.2$.
		}\label{image.f-small} 
	\end{figure}
	
	\section{Results}\label{sec-disc-analysis}
	In this section, we first give some properties satisfied by the distributions
	of $h^y_{t,K}$, see Notation~\ref{not:h-tK-y}, and the invariant measure $\pi_K$.
	They enable us to prove Theorem~\ref{theo:main-result}, and then our main results, in the second part.
	\subsection{Binomial mixture}
	We denote by $F_{[0,1]}$ the set of cumulative distribution functions
	associated to  the set $\mathcal{P}([0,1])$ of probability measures on $[0,1]$. 
	\begin{definition}\label{def-bin-mix}
		The distribution of $X$ is said to be a Binomial mixture with mixing distribution
		$g\in \mathcal P([0,1])$ and size parameter $K$, denoted by $\BinMix(K,g)$, if 
		\begin{align*}	
		\forall j\in \llbracket 0,K\rrbracket,
		\qquad
		\proba\left(X=j\right) = \displaystyle\binom{K}{j} \int_{0}^{1}u^j(1-u)^{K-j}g(du).
		\end{align*}
	\end{definition}
	\begin{remarque}\label{rem-equivalence-BinMix-Bin} \hspace{2em}
		\begin{itemize} 
			\item
			$X\overset{\mathcal{L}}{=} \BinMix(K,g)$ is equivalent to $X|Y\overset{\mathcal{L}}{=} \Bin(K,Y)$
			where $Y$ is a random variable independent of the Binomial and with law $g$.
			Indeed 
			\begin{align*}	
			\proba\left(X=j\right) =
			\int_{0}^{1}\proba(X=j|Y=u)g(du)=\displaystyle \binom{K}{j}
			\int_{0}^{1}u^j(1-u)^{K-j}g(du).
			\end{align*}
			We use both notations $X\overset{\mathcal{L}}{=} \BinMix(K,g)$
			and $X\overset{\mathcal{L}}{=} \BinMix(K,Y)$.
			\item The law of \(X\) is fully characterized by the moments
			\(\mathbb{E}(Y), \mathbb{E}(Y^2),\cdots, \mathbb{E}(Y^K)\).
			Hence, if $\tilde{g} \in \mathcal P([0,1])$ is such that
			$\forall k\in \llbracket0,K\rrbracket,\ \ \int_0^1 u^k\tilde{g}(du)=\int_0^1 u^kg(du),$
			then $\BinMix(K,\tilde{g})\overset{\mathcal{L}}{=}\BinMix(K,g)$.
		\end{itemize} 
	\end{remarque}
	First, we show that the set of Binomial mixtures is stable by the Markov chain \(h_{t,K}\):
	assume that $h_{t,K}\overset{\mathcal{L}}{=} \BinMix(K,g_t)$ for some \(g_t\in \mathcal P([0,1])\),
	then there exists a probability \(g_{t+1}\), function of \(g_t\),
	such that $h_{t+1,K}\overset{\mathcal{L}}{=} \BinMix(K,g_{t+1})$.
	Moreover, denoting by \(G_t\) the cumulative distribution function associated to \(g_t\),
	we show that for all $t\geq 1$, $G_{t+1}(x) = \mathcal{R}(G_t)(x)$ where 
	\begin{nota}\label{func_eq}
		For all $\Gamma\in F_{[0,1]}$ and $u\in \reels,$ $\mathcal{R}$ is defined by
		\begin{equation*} 
		\mathcal{R}(\Gamma)(u) \overset{def}{=}
		f_N \Gamma\left(\frac{u-f_N q^+}{1-(1 - f_N) q_{10}^- - f_N q^+}\right)
		+ (1-f_N) \Gamma\left(\frac{u}{1-f_Nq_{01}^-}\right). 
		\end{equation*}
	\end{nota} 
	
	\begin{proposition}\label{prop:dyn-h} 
		Let us assume that $h_{-r+1,K}\overset{\mathcal{L}}{=} \BinMix(K,g_{-r+1})$,
		for $g_{-r+1}\in \mathcal P([0,1])$.
		Then for all $t\geq 1$, $\exists g_t,\ g_t^0,\ g_t^1 \in \mathcal P([0,1])$ such that
		$h_{t,K}\overset{\mathcal{L}}{=} \BinMix(K,g_t)$ and
		$h_{t,K}^y\overset{\mathcal{L}}{=} \BinMix(K,g_t^y)$ for $y=0,1$.
		Moreover, at time $t=1$,
		\begin{align}
		G_{1}(u) &= f_N G_{-r+1}\left(\frac{u-1}{(1-q^+)^r}+1\right) +
		(1-f_N)G_{-r+1}\left(\frac{u}{(1-q^-_{01})^r}\right),\label{eq:recursive-h-t0}\\
		G_{1}^1(u) &= G_{-r+1}\left(\frac{u-1}{(1-q^+)^r}+1\right)\quad
		\text{and}\quad G_{1}^0(u) =
		G_{-r+1}\left(\frac{u}{(1-q^-_{01})^r}\right),\label{eq-cdf-H-2}\\
		\text{and }\ \forall t\geq &1, \quad
		G_{t+1}(u) = \mathcal{R}(G_t)(u) \quad \text{and} \quad G_{t+1}^y(u) = \mathcal{R}(G_t^y)(u).\label{eq-cdf-H-1}
		\end{align}
	\end{proposition}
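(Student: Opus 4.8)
The plan is to carry along, at every step, an explicit latent random variable in $[0,1]$ playing the role of the mixing distribution, and to track how the one-step recursions of Proposition~\ref{prop-Markov} act on its law. The algebraic engine, which I would state once and reuse, is the following elementary fact. Suppose $X$ satisfies $X\,|\,Y \overset{\mathcal{L}}{=} \Bin(K,Y)$ for a $[0,1]$-valued random variable $Y$, and suppose that, conditionally on $X$, $X' \overset{\mathcal{L}}{=} \Bin(X,a) + \Bin(K-X,b)$ with the two Binomials independent. Then $X'\,|\,Y \overset{\mathcal{L}}{=} \Bin(K,\, aY + b(1-Y))$. One sees this on the $K$ relevant synapses: given $Y$, each is ``strong'' independently with probability $Y$; conditionally on being strong it is retained with probability $a$, conditionally on being weak it is promoted with probability $b$; hence each ends up strong independently with probability $aY+b(1-Y)$. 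In particular $\BinMix(K,\cdot)$ is stable under such a move, and the new mixing variable is the affine image $y \mapsto b + (a-b)y$ of the old one.

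\medskip
\noindent\textbf{Forgetting step.} Fix $t\geq 1$ and assume $h_{t,K} \overset{\mathcal{L}}{=} \BinMix(K,Y_t)$ with $Y_t \sim g_t$. I would condition first on $\xi_t^1$. On $\{\xi_t^1 = 1\}$, using that $n - \Bin(n,p) \overset{\mathcal{L}}{=} \Bin(n,1-p)$, equation~\eqref{def-h-Bin} reads $h_{t+1,K} \overset{\mathcal{L}}{=} \Bin(h_{t,K},\,1-(1-f_N)q_{10}^-) + \Bin(K-h_{t,K},\,f_N q^+)$ with the two Binomials independent given $h_{t,K}$; on $\{\xi_t^1 = 0\}$ it reads $h_{t+1,K} \overset{\mathcal{L}}{=} \Bin(h_{t,K},\,1-f_N q_{01}^-) + \Bin(K-h_{t,K},0)$. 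Applying the fact above with $Y=Y_t$ gives, conditionally on $(\xi_t^1,Y_t)$, $h_{t+1,K} \overset{\mathcal{L}}{=} \Bin(K, \phi_{\xi_t^1}(Y_t))$, where $\phi_1(y) = f_N q^+ + (1-(1-f_N)q_{10}^- - f_N q^+)\,y$ and $\phi_0(y) = (1-f_N q_{01}^-)\,y$. Averaging over $\xi_t^1 \sim \mathrm{Bernoulli}(f_N)$, independent of $Y_t$, yields $h_{t+1,K} \overset{\mathcal{L}}{=} \BinMix(K, Y_{t+1})$ with $Y_{t+1} = \phi_1(Y_t)$ on an independent event of probability $f_N$ and $Y_{t+1} = \phi_0(Y_t)$ otherwise; in particular $g_{t+1}$ exists. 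Since $\phi_1,\phi_0$ are increasing affine maps, inverting them gives $\proba(\phi_1(Y_t)\leq u) = G_t((u-f_N q^+)/(1-(1-f_N)q_{10}^- - f_N q^+))$ and $\proba(\phi_0(Y_t)\leq u) = G_t(u/(1-f_N q_{01}^-))$, hence $G_{t+1} = \mathcal{R}(G_t)$, which is~\eqref{eq-cdf-H-1}. (If a denominator vanishes, a boundary case of~\eqref{ass-param}, the corresponding $\phi$ is constant and the affected term degenerates to a Dirac mass, so the formula still holds with the usual cdf conventions.)

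\medskip
\noindent\textbf{Learning step.} For $t\in\llbracket -r+1,0\rrbracket$ one has $\xi_t^1 \equiv \xi_0^1$ and $\xi_t^j \equiv \xi_0^j = 1$, so across the $r$ presentations a weak synapse stays weak with probability $(1-q^+)^r$ when $\xi_0^1 = 1$, while a strong synapse stays strong with probability $(1-q_{01}^-)^r$ when $\xi_0^1 = 0$. By Assumption~\ref{ass-main:init-cond}, $\xi_0^1$ is independent of $h_{-r+1,K}$ and $h_{-r+1,K}^y \overset{\mathcal{L}}{=} h_{-r+1,K} \overset{\mathcal{L}}{=} \BinMix(K, Y_{-r+1})$ with $Y_{-r+1}\sim g_{-r+1}$. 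Conditioning on $\xi_0^1 = y$ and applying the same fact to~\eqref{def-h-t0-Bin}: for $y=1$ (only potentiation), $h_{1,K}^1\,|\,Y_{-r+1} \overset{\mathcal{L}}{=} \Bin(K, \psi_1(Y_{-r+1}))$ with $\psi_1(y) = 1 - (1-y)(1-q^+)^r$; for $y=0$ (only depression), $h_{1,K}^0\,|\,Y_{-r+1} \overset{\mathcal{L}}{=} \Bin(K, \psi_0(Y_{-r+1}))$ with $\psi_0(y) = (1-q_{01}^-)^r\,y$. Inverting the increasing affine maps $\psi_1,\psi_0$ gives~\eqref{eq-cdf-H-2}, and averaging over $\xi_0^1 \sim \mathrm{Bernoulli}(f_N)$ gives $h_{1,K} \overset{\mathcal{L}}{=} \BinMix(K, Y_1)$ with $Y_1 \in \{\psi_1(Y_{-r+1}),\psi_0(Y_{-r+1})\}$, whence $G_1 = f_N G_1^1 + (1-f_N)G_1^0$, which is~\eqref{eq:recursive-h-t0}.

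\medskip
\noindent I would then assemble everything by induction on $t\geq 1$: the learning step provides the base case $t=1$ for $h_{1,K}$ and $h_{1,K}^y$; the forgetting step, applied to $h_{t,K}$ and to $h_{t,K}^y$ (which by Remark~\ref{rem:20190821} has the same transition matrix), promotes $\BinMix(K,g_t)$ to $\BinMix(K,g_{t+1})$ and $\BinMix(K,g_t^y)$ to $\BinMix(K,g_{t+1}^y)$, yielding $G_{t+1} = \mathcal{R}(G_t)$ and $G_{t+1}^y = \mathcal{R}(G_t^y)$. The main obstacle, and the only genuinely non-formal point, is that once $\xi_t^1$ is averaged out the per-synapse updates are \emph{not} independent, so one must condition on $\xi_t^1$ (and, in the learning phase, on $\xi_0^1$) before collapsing the two or three Binomials of Proposition~\ref{prop-Markov} into a single one; this collapse is exactly the identity of the first paragraph, and it works precisely because the latent $Y$ renders the number of strong synapses a Binomial thinning of $K$, and two successive Binomial thinnings compose.
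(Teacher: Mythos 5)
Your proposal is correct and follows essentially the same route as the paper: your ``elementary fact'' is exactly the paper's Lemma~\ref{lem-still-BinMix} (with $aY+b(1-Y)=b+(a-b)Y$), and the rest of the argument --- condition on $\xi_t^1$ (resp.\ $\xi_0^1$), rewrite the two or three Binomials of Proposition~\ref{prop-Markov} as a thinning $\Bin(h,a)+\Bin(K-h,b)$, apply the lemma, then average over the Bernoulli$(f_N)$ --- is the paper's proof in Appendix~\ref{app:prop-dyn-h}. The only difference is that the paper establishes the key lemma by an explicit coupling with uniform random variables and an area computation, where you give the equivalent per-synapse conditional-independence argument; this is the same idea at a slightly less formal level of execution.
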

	\begin{remarque}
		First, we note that $g_t$ does not depend on $K$.
		This is crucial for the proof of Theorems~\ref{theo:main-result}
		and~\ref{theo:main_result_q_N}.
		Then, let the assumptions of the previous Proposition hold and
		denote by $Y_t$ a random variable with distribution $g_{t}$.
		Knowing $Y_t$, we have $h_{t,K} \overset{\mathcal{L}}{=} \Bin\left(K,Y_t\right)$.
		In particular, the mean synaptic current is given by
		$\esp\left(h_{t,K}\right) = K\esp\left(Y_t\right) = K\proba\left( J_t^{1j} = 1 \right) $.
		Indeed, $\esp\left(h_{t,K}\right)$ is the mean number of strong synapses
		$J_t^{1j}$ (with $j$ such that $\xi_0^j = 1$) at time $t$.
	\end{remarque}
	Finally, we show that $\mathcal{R}$ is contracting and characterises $\pi_K$.
	\begin{proposition}\label{prop-R-contracting} 
		The application $\mathcal{R}$ acting on $F_{[0,1]}$ is contracting for the norm \({L^1(0,1)}\).
		Moreover, there exists a unique \(G^*\in F_{[0,1]}\) invariant for $\mathcal{R}$. 
	\end{proposition}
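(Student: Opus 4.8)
The plan is to recognise $\mathcal{R}$ as the transfer operator of an iterated function system and to deduce the $L^1$ contraction by an elementary change of variables, after which Banach's fixed point theorem gives the invariant $G^{*}$.

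Set $b := f_N q^+$, $c_1 := 1-(1-f_N)q_{10}^- - f_N q^+$ and $c_2 := 1-f_N q_{01}^-$, so that
\[
\mathcal{R}(\Gamma)(u) = f_N\,\Gamma\!\left(\tfrac{u-b}{c_1}\right) + (1-f_N)\,\Gamma\!\left(\tfrac{u}{c_2}\right).
\]
By~\eqref{ass-param} one has $0<c_2\le 1$, $0\le c_1\le 1$ and $b+c_1 = 1-(1-f_N)q_{10}^-\le 1$ (in the knife-edge case $c_1=0$, i.e.\ $q_{10}^-=q^+=1$ with $f_N<1$, the first summand is to be read as $f_N\mathbbm{1}_{u\ge b}$, which only makes the estimate below stronger; the case $f_N=1$ is analogous). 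First I would check that $\mathcal{R}$ maps $F_{[0,1]}$ into itself: $\mathcal{R}(\Gamma)$ is the cumulative distribution function of the mixture of $c_1Y+b$ and $c_2Y$ (with weights $f_N$ and $1-f_N$) for $Y\sim\Gamma$; hence it is nondecreasing, right-continuous, equal to $0$ on $(-\infty,0)$ and to $1$ on $[1,\infty)$.

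For the contraction, take $\Gamma_1,\Gamma_2\in F_{[0,1]}$, apply the triangle inequality to $\mathcal{R}(\Gamma_1)(u)-\mathcal{R}(\Gamma_2)(u)$, integrate over $u\in(0,1)$, and change variables in each term ($v=u/c_2$ in the second, $v=(u-b)/c_1$ in the first), using that $\Gamma_1-\Gamma_2$ vanishes outside $[0,1]$ to restore the integration range to $(0,1)$. This yields
\[
\|\mathcal{R}(\Gamma_1)-\mathcal{R}(\Gamma_2)\|_{L^1(0,1)} \;\le\; \lambda\,\|\Gamma_1-\Gamma_2\|_{L^1(0,1)},
\quad
\lambda := f_N c_1+(1-f_N)c_2 = 1 - f_N\bigl[(1-f_N)(q_{01}^-+q_{10}^-)+f_N q^+\bigr].
\]
Then I would verify $\lambda\in[0,1)$: clearly $\lambda=f_Nc_1+(1-f_N)c_2\ge 0$, while the bracket is strictly positive by~\eqref{ass-param} (it contains $q_{01}^->0$ when $f_N<1$, and equals $q^+>0$ when $f_N=1$), so $\lambda<1$. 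Thus $\mathcal{R}$ is a strict contraction on $F_{[0,1]}$ for the $L^1(0,1)$ norm.

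Finally, $F_{[0,1]}$ is a closed subset of the Banach space $L^1(0,1)$, hence a complete metric space: if $\Gamma_n\to\Gamma$ in $L^1$ then a subsequence converges almost everywhere, monotonicity and the boundary values $0$ on $(-\infty,0)$ and $1$ on $[1,\infty)$ pass to the limit, and the right-continuous modification of $\Gamma$ is the cumulative distribution function of a probability measure on $[0,1]$. Banach's fixed point theorem then provides a unique $G^{*}\in F_{[0,1]}$ with $\mathcal{R}(G^{*})=G^{*}$. I expect the only delicate point to be this last identification --- checking that the abstract $L^1$ fixed point is genuinely a cumulative distribution function of a probability measure on $[0,1]$; the change-of-variables estimate and the bound $\lambda<1$ are routine once the parameter constraints~\eqref{ass-param} are used.
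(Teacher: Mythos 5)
Your proposal is correct and follows essentially the same route as the paper: the same triangle-inequality-plus-change-of-variables computation yielding the contraction constant $f_N\Lambda_1+(1-f_N)\Lambda_0=\lambda_1<1$, and a fixed-point argument that is the Banach-theorem packaging of the paper's explicit Cauchy-sequence construction, including the same key step of taking an a.e.\ limit and passing to its right-continuous modification to land back in $F_{[0,1]}$. Your explicit treatment of the degenerate case $\Lambda_1=0$ and of the strict positivity of $1-\lambda_1$ is slightly more careful than the paper's, but the substance is identical.
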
 
	Propositions~\ref{prop:dyn-h} and \ref{prop-R-contracting}
	are proved in the Appendices~\ref{app:prop-dyn-h} and~\ref{app-R-contracting}.
	\begin{corollaire}\label{cor:mes-inv-binmix}
		Let \(G^*\) be the unique fixed point of \(\mathcal{R}\) and \(g^*\) its associated distribution.
		The invariant measure \(\pi_K\) of the Markov chain \(h_{t,K}\) satisfies $\pi_K = \BinMix(K,g^*)$.
		In addition, the invariant measure \(\pi_\infty\) of \(h_t\) is given by 
		$\pi_\infty = \BinoMix{\hat{K},g^*}$, where \(\hat{K}\) has a Binomial law with parameters \(N\)
		and \(f_N\), the two random variables being independent.
		Moreover, the smallest interval $[m_\infty, M_\infty]$ containing the support
		of $g^*$ verifies
		\begin{align}\label{eq:sup-g-star}
		Supp(g^*)\subset\left[0,\frac{f_N q^+}{f_N q^+ + (1-f_N) q^-_{10}}\right]
		:=\left[m_\infty, M_{\infty}\right].
		\end{align}
	\end{corollaire}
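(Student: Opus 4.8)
The plan is to read the statement off Proposition~\ref{prop:dyn-h}, Proposition~\ref{prop-R-contracting} and Corollary~\ref{prop-conv-mes-inv}; the only step that is not pure bookkeeping is a soft continuity argument turning $L^1$‑convergence of the mixing cumulative distribution functions into convergence of the associated Binomial mixtures. \emph{Identifying $\pi_K$.} First I would apply Proposition~\ref{prop:dyn-h} with the degenerate initial mixing law $g_{-r+1}=\delta_0$ (so that $h_{-r+1,K}\equiv 0$, trivially a Binomial mixture): this gives $h_{t,K}\overset{\mathcal L}{=}\BinMix(K,g_t)$ for all $t\ge1$, and by~\eqref{eq-cdf-H-1} the cumulative distribution functions satisfy $G_{t+1}=\mathcal R(G_t)$, so $G_t=\mathcal R^{\,t-1}(G_1)\to G^*$ in $L^1(0,1)$ by Proposition~\ref{prop-R-contracting}. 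On the other hand Corollary~\ref{prop-conv-mes-inv}, valid for any initial condition, says $h_{t,K}$ converges in law to $\pi_K$, which — the state space $\llbracket0,K\rrbracket$ being finite — just means $\proba(h_{t,K}=j)\to\pi_K(j)$ for every $j$. It therefore suffices to show $\BinMix(K,g_t)(j)\to\BinMix(K,g^*)(j)$. Writing $\phi_j(u)=u^j(1-u)^{K-j}\in C^1([0,1])$ and using the identity $\int_{[0,1]}\phi\,d\mu=\phi(1)-\int_0^1\phi'(u)F(u)\,du$, valid for the cumulative distribution function $F$ of any probability measure $\mu$ on $[0,1]$, the $\phi_j(1)$ boundary terms cancel, so $\big|\BinMix(K,g_t)(j)-\BinMix(K,g^*)(j)\big|=\binom{K}{j}\big|\int_0^1\phi_j'(u)\,(G_t-G^*)(u)\,du\big|\le\binom{K}{j}\,\|\phi_j'\|_\infty\,\|G_t-G^*\|_{L^1(0,1)}$, which tends to $0$. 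Hence $\pi_K(j)=\binom{K}{j}\int_0^1u^j(1-u)^{K-j}g^*(du)$ for all $j$, i.e.\ $\pi_K=\BinMix(K,g^*)$.

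\emph{Identifying $\pi_\infty$ and the support of $g^*$.} Corollary~\ref{prop-conv-mes-inv} gives $\pi_\infty=\sum_{K=0}^N\Proba{\hat K=K}\,\pi_K$ with $\hat K\overset{\mathcal L}{=}\Bin(N,f_N)$; plugging in $\pi_K=\BinMix(K,g^*)$ and reading Remark~\ref{rem-equivalence-BinMix-Bin} the other way round (draw $\hat K$, then $Y\sim g^*$ independently, then a $\Bin(\hat K,Y)$) yields $\pi_\infty=\BinoMix{\hat K,g^*}$. For the support, write $M_\infty=\frac{f_Nq^+}{f_Nq^++(1-f_N)q_{10}^-}$ and $D=1-(1-f_N)q_{10}^--f_Nq^+$, which is $\ge0$ and, under Assumption~\ref{ass-main:on-q10-q01}, $>0$ (the boundary case $D=0$, where $\mathcal R$ degenerates, being recovered by continuity in the parameters). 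Let $\mathcal C$ be the set of cumulative distribution functions in $F_{[0,1]}$ of measures supported in $[0,M_\infty]$, i.e.\ $\Gamma\equiv1$ on $(M_\infty,1)$; it is nonempty and closed in $F_{[0,1]}$ for the $L^1(0,1)$ norm. I would then check that $\mathcal R(\mathcal C)\subset\mathcal C$: the two arguments of $\Gamma$ inside $\mathcal R(\Gamma)(u)$ are $\psi_1(u)=\frac{u-f_Nq^+}{D}$ and $\psi_0(u)=\frac{u}{1-f_Nq_{01}^-}$, with $\psi_1$ increasing and $\psi_1(M_\infty)=M_\infty$ — this is exactly the identity $M_\infty(1-D)=f_Nq^+$ defining $M_\infty$, i.e.\ $M_\infty$ is the fixed point of the affine potentiation map $y\mapsto Dy+f_Nq^+$ — and $\psi_1(u)<0$ for $u<0$, while $\psi_0(u)\ge u$; hence for $\Gamma\in\mathcal C$ one gets $\mathcal R(\Gamma)(u)=f_N+(1-f_N)=1$ for $u\ge M_\infty$ and $\mathcal R(\Gamma)(u)=0$ for $u<0$, so $\mathcal R(\Gamma)\in\mathcal C$. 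Since $\mathcal R$ is a contraction on $F_{[0,1]}$ (Proposition~\ref{prop-R-contracting}) stabilising the closed nonempty set $\mathcal C$, its unique fixed point $G^*$ must lie in $\mathcal C$ — iterate $\mathcal R$ from any element of $\mathcal C$ and pass to the $L^1$‑limit — which is precisely~\eqref{eq:sup-g-star}. Finally, applying the invariance $G^*=\mathcal R(G^*)$ to $\sup Supp(g^*)$ and $\inf Supp(g^*)$ and using that $y\mapsto(1-f_Nq_{01}^-)y$ is a strict contraction towards $0$, one pins these endpoints down to $M_\infty$ and $0$, so $[0,M_\infty]$ is indeed the smallest interval containing $Supp(g^*)$.

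\emph{Main obstacle.} The one genuinely analytic point is the passage, in the first step, from $L^1$‑convergence of the $G_t$ to convergence of the discrete laws $\BinMix(K,g_t)$; it is settled by the integration‑by‑parts estimate against the $C^1$ polynomials $\phi_j$ above (atoms of $g_t$ and $g^*$ cause no trouble because the test functions are $C^1$ up to the boundary). Everything else is bookkeeping, the only thing to watch being to recognise $M_\infty$ as the fixed point of the affine ``potentiation'' branch $y\mapsto Dy+f_Nq^+$ of $\mathcal R$, which is what makes the support bound come out with this exact constant.
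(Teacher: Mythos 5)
Your proof is correct, but it reaches the first and third claims by routes that differ from the paper's. For $\pi_K=\BinMix(K,g^*)$, the paper simply observes that $\BinMix(K,g^*)$ is a fixed point of the transition (apply Proposition~\ref{prop:dyn-h} once: $G^*=\mathcal{R}(G^*)$ gives $g_{t+1}=g_t=g^*$) and invokes uniqueness of the invariant measure from Corollary~\ref{prop-conv-mes-inv}; you instead run the dynamics from $\delta_0$, get $G_t\to G^*$ in $L^1$ by the contraction, and convert $L^1$-convergence of the mixing CDFs into convergence of the mixture laws via integration by parts against the $C^1$ test functions $\phi_j$. Your continuity lemma is sound (and is a nice observation in its own right, essentially a quantitative version of the moment characterisation in Remark~\ref{rem-equivalence-BinMix-Bin}), but it is work the paper's two-line invariance-plus-uniqueness argument avoids. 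For the support, the paper directly writes the relations $m_\infty=m_\infty\Lambda_0\wedge(m_\infty\Lambda_1+f_Nq^+)$ and $M_\infty=M_\infty\Lambda_0\vee(M_\infty\Lambda_1+f_Nq^+)$ satisfied by the endpoints of the convex hull of $Supp(g^*)$ and solves them; your detour through the closed $\mathcal{R}$-invariant set $\mathcal{C}$ of CDFs supported in $[0,M_\infty]$ proves the inclusion~\eqref{eq:sup-g-star} cleanly (and arguably more rigorously than the paper's terse endpoint identities), after which your endpoint-pinning step coincides with the paper's computation; the key identity $\psi_1(M_\infty)=M_\infty$ you isolate is exactly the relation the paper uses. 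The $\pi_\infty$ part is identical in both. The only soft spot is your parenthetical treatment of the degenerate case $\Lambda_1=0$ "by continuity in the parameters", but the paper does not address that boundary case either.
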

	\begin{proof} 
		Let $g^*\in \mathcal{P}([0,1])$ be a probability distribution such that its cumulative
		distribution function $G^*$ satisfies $\mathcal{R}(G^*) = G^*$.
		Then, by Proposition~\ref{prop:dyn-h}, $\BinMix(K,g^*)$ is invariant
		for $\left(h_{t,K}\right)_{t\geq 1}$.
		The result on $\pi_\infty$ follows from Corollary \ref{prop-conv-mes-inv}.
		
		Now, let $[m_{\infty}, M_{\infty}]$ be the convex envelop of the support of $g^*$,
		then $Supp(g^*) \subset [m_{\infty}, M_{\infty}]\subset [0,1]$.
		Thus by the equation \(\mathcal{R}(G^*) = G^*\), we get
		\begin{align*}		
		&m_{\infty} = m_{\infty}(1-f_N q^-_{01})\wedge
		\left(m_{\infty}\left(1-(1-f_N) q^-_{10}-f_N q^+\right)+f_N q^+\right),\\
		&M_{\infty} = 
		M_{\infty}(1-f_N q^-_{01})\vee
		\left(M_{\infty}\left(1-(1-f_N) q^-_{10}-f_N q^+\right)+f_N q^+\right).
		\end{align*}
		As $(1-f_N q^-_{01}) < 1$, the first equation implies that
		$0\leq m_{\infty} \leq  m_{\infty}(1-f_N q^-_{01})$ so $m_{\infty}=0$, and the second equation implies that $M_{\infty} = M_{\infty}\left(1-(1-f_N) q^-_{10}-f_N q^+\right)+f_N q^+$,
		thus $M_{\infty}=\frac{f_N q^+}{f_N q^+ + (1-f_N) q^-_{10}}$.
	\end{proof}
	\begin{remarque}
		Propositions~\ref{prop:dyn-h}, ~\ref{prop-R-contracting}, and
		the first part of Corollary~\ref{cor:mes-inv-binmix} are in~\cite{amit_precise_2010}
		with $q_{10}^-=0$ and $r=1$. We prove them here with a different method.
	\end{remarque}
	\subsection{Main results}
	The learning and the forgetting phases are both described by Markov chains.
	We first give the spectrum of the transition matrices associated to these chains
	and then we give our main results on $t_*$.
	
	\subsubsection*{Spectrum}
	Let $P_{y,K}$ be the transition matrix of the synaptic current
	$\left(h_{t,K}^y\right)_{-r+1 < t \leq 1}$.
	We denote by $\nu_{t,K}^y =\left[\nu_{t,K}^y(0),\ \nu_{t,K}^y(1),\ \hdots,\ \nu_{t,K}^y(K)\right]$
	the distribution of $h_{t,K}^y$.
	We can then write
	\(
	\nu_{1,K}^y = \nu_{0,K}^y P_{y,K} =\nu_{-r+1,K}^y\left(P_{y,K}\right)^r.
	\)
	\begin{proposition}\label{prop-spec-learning} 
		The spectra of $P_{0,K}$ and $P_{1,K}$ are
		\[
		\Sigma\left(P_{0,K}\right)=\left\{\left(1-q_{01}^-\right)^i,\ 0\leq i\leq K\right\} \ \ \text{and} \ \ \Sigma\left(P_{1,K}\right)=\left\{\left(1-q^+\right)^i,\ 0\leq i\leq K\right\}.
		\]
	\end{proposition}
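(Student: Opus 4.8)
The plan is to write the two transition matrices $P_{0,K}$ and $P_{1,K}$ explicitly, straight out of (the proof of) Proposition~\ref{prop-Markov}, and then to observe that in the natural ordering of the state space $\llbracket 0,K\rrbracket$ each of them is triangular, so that its spectrum is read off the diagonal.

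First I would record the one-step dynamics of the learning phase. If $\xi_0^1=0$, a single presentation of $\xi_0$ depresses each of the $h$ currently strong synapses (among the $K$ relevant ones) independently with probability $q_{01}^-$, so $h\mapsto h-\Bin(h,q_{01}^-)$; hence $P_{0,K}(i,j)=\binom{i}{i-j}(q_{01}^-)^{i-j}(1-q_{01}^-)^{j}$ for $j\le i$ and $P_{0,K}(i,j)=0$ for $j>i$. Symmetrically, if $\xi_0^1=1$, a single presentation potentiates each of the $K-h$ weak synapses independently with probability $q^+$, so $h\mapsto h+\Bin(K-h,q^+)$, giving $P_{1,K}(i,j)=\binom{K-i}{j-i}(q^+)^{j-i}(1-q^+)^{K-j}$ for $j\ge i$ and $0$ for $j<i$. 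Ordering the states $0<1<\cdots<K$, the matrix $P_{0,K}$ is then lower triangular with diagonal entries $P_{0,K}(i,i)=(1-q_{01}^-)^i$, and $P_{1,K}$ is upper triangular with diagonal entries $P_{1,K}(i,i)=(1-q^+)^{K-i}$. Since the eigenvalues of a triangular matrix are exactly its diagonal entries, we obtain $\Sigma(P_{0,K})=\{(1-q_{01}^-)^i:0\le i\le K\}$ and, after re-indexing $i\leftrightarrow K-i$, $\Sigma(P_{1,K})=\{(1-q^+)^i:0\le i\le K\}$.

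As a complement that may be convenient for later spectral computations, I would also exhibit explicit right eigenvectors using the identity $\esp\!\left[\binom{\Bin(n,p)}{k}\right]=\binom{n}{k}p^k$: the functions $i\mapsto\binom{i}{k}$, $0\le k\le K$, satisfy $P_{0,K}\binom{\cdot}{k}=(1-q_{01}^-)^k\binom{\cdot}{k}$, because conditionally on starting at state $i$ the next state is distributed as $\Bin(i,1-q_{01}^-)$; likewise $i\mapsto\binom{K-i}{k}$ are right eigenvectors of $P_{1,K}$ with eigenvalues $(1-q^+)^k$. These are $K+1$ polynomials in $i$ of pairwise distinct degrees, hence linearly independent, which shows that the listed values exhaust the spectrum. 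I do not expect any genuine obstacle here; the only point to keep in mind is that~\eqref{ass-param} permits $q^+$ or $q_{01}^-$ to equal $1$, in which case several diagonal entries coincide, but the statement remains correct since it only describes the spectrum as a set.
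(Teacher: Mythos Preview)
Your argument is correct and follows the same route as the paper: both observe that $P_{0,K}$ is lower triangular and $P_{1,K}$ upper triangular in the natural ordering, and read the spectrum off the diagonal. You are in fact more careful than the paper about the $P_{1,K}$ diagonal, correctly obtaining $P_{1,K}(i,i)=(1-q^+)^{K-i}$ and then re-indexing (the paper writes $(1-q^+)^i$ directly, which is the right spectral set but the wrong diagonal entry); your added eigenvector complement via $\esp\!\left[\binom{\Bin(n,p)}{k}\right]=\binom{n}{k}p^k$ is correct and not in the paper, and nicely confirms the result without relying on triangularity.
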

	\begin{proof}
		The dynamics give for all $j>i$, $P_{0,K}^{ij} = P_{1,K}^{ji}= 0$.
		So the matrices are triangular. Their spectra are given by the diagonal elements:
		\[
		\forall i, \quad
		P_{0,K}^{ii} = (1-q_{01}^-)^i
		\quad \text{and} \quad
		P_{1,K}^{ii} = (1-q^+)^i.
		\] 
	\end{proof}
	
	\begin{proposition}\label{prop-spec} 
		The spectrum of the transition matrix $\pk$ of $\left(h_{t,K}\right)_{t \geq 1}$ is 
		\[
		\Sigma\left(\pk\right)=\left\{(1-f_N)\left(1-f_Nq_{01}^-\right)^i
		+f_N\left(1-(1-f_N)q_{10}^--f_Nq^+\right)^i,\ 0\leq i\leq K\right\}.
		\]
	\end{proposition}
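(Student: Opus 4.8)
The plan is to follow the same strategy that worked for Proposition~\ref{prop-spec-learning}, namely to find a basis in which $\pk$ is triangular, but now the dynamics of $\left(h_{t,K}\right)_{t\geq 1}$ mixes up-jumps and down-jumps so $\pk$ is \emph{not} triangular in the natural basis $(e_0,\dots,e_K)$. Instead I would diagonalize by exploiting the Binomial-mixture structure from Proposition~\ref{prop:dyn-h}. The key observation is that the map $\mathcal{R}$ on cumulative distribution functions acts, at the level of \emph{moments} of the mixing distribution, as an affine map; equivalently, the action of $\pk$ on the $(K+1)$-dimensional space of distributions on $\llbracket 0,K\rrbracket$ is conjugate, via the ``moment coordinates'', to a triangular map whose diagonal entries are exactly the claimed eigenvalues.

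Concretely, I would proceed as follows. First, recall from Remark~\ref{rem-equivalence-BinMix-Bin} that a distribution $\nu$ on $\llbracket 0,K\rrbracket$ is determined by the numbers $\mu_i(\nu) := \esp\big[\binom{Y}{\,\cdot\,}\big]$ — more precisely by the ``factorial-moment'' functionals $\nu\mapsto \sum_{j} \binom{j}{i}\nu(j)$ for $i=0,\dots,K$, which when $\nu = \BinMix(K,g)$ equal $\binom{K}{i}\esp(Y^i)$. These $K+1$ linear functionals form a basis of the dual space (the change of basis from $\nu(0),\dots,\nu(K)$ to these functionals is triangular and invertible). Next I would compute how one step of the chain transforms $\esp(Y_t^i)$ into $\esp(Y_{t+1}^i)$: from Notation~\ref{func_eq}, $Y_{t+1}$ equals $q^+ f_N + (1-(1-f_N)q_{10}^- - f_N q^+)Y_t$ with probability $f_N$ and $(1-f_N q_{01}^-)Y_t$ with probability $1-f_N$ (this is just reading off the two branches of $\mathcal{R}$). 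Hence
\[
\esp\left(Y_{t+1}^i\right) = f_N\,\esp\left[\left(q^+ f_N + \left(1-(1-f_N)q_{10}^- - f_N q^+\right)Y_t\right)^i\right] + (1-f_N)\left(1-f_N q_{01}^-\right)^i \esp\left(Y_t^i\right).
\]
Expanding the $i$-th power by the binomial theorem, the coefficient of $\esp(Y_t^i)$ is
\[
f_N\left(1-(1-f_N)q_{10}^- - f_N q^+\right)^i + (1-f_N)\left(1-f_N q_{01}^-\right)^i,
\]
and all other terms involve only $\esp(Y_t^k)$ with $k<i$. So in the ordered basis of factorial-moment functionals (equivalently, acting on the transpose / left-action), $\pk$ is upper (or lower) triangular with precisely these diagonal entries for $i=0,\dots,K$. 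Since a triangular matrix has spectrum equal to its diagonal, and the $K+1$ values obtained are the claimed set (they are genuinely $K+1$ distinct points under assumption~\eqref{ass-param} because the base is in $(0,1)$ unless $q_{10}^- = q^+ = q_{01}^- \cdot$something degenerate, but distinctness is not needed for the spectrum as a set), we are done. One should note that Proposition~\ref{prop:dyn-h} only asserts the Binomial-mixture property propagates forward from a Binomial-mixture initial condition; to get the spectrum of $\pk$ as an operator on \emph{all} distributions I would instead phrase the moment computation purely as a linear identity on $\reels^{K+1}$ — the recursion above for $\esp(Y^i)$ is really the statement that the $(K+1)\times(K+1)$ matrix expressing $\pk$ in the factorial-moment basis is triangular, and that holds regardless of positivity of the ``mixing measure''.

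The main obstacle is making the change of basis clean: one must verify that the factorial-moment functionals $\nu \mapsto \sum_j \binom{j}{i}\nu(j)$, $i=0,\dots,K$, really do form a basis of the dual of $\reels^{K+1}$ (they do — the matrix $\binom{j}{i}$ is unitriangular) and, more delicately, that the one-step recursion for these functionals under $\pk$ is genuinely triangular with the stated diagonal. The computation of the diagonal coefficient is the only place where real work happens, and it is exactly the binomial expansion displayed above; everything else is bookkeeping. An alternative, possibly shorter route is to exhibit explicit right eigenvectors of $\pk$: guess that the polynomials $p_i(h)$ obtained as suitable images of $h\mapsto \binom{h}{i}\big/\binom{K}{i}$ under the mixture correspondence are eigenfunctions, plug into $\pk p_i = \lambda_i p_i$ using the jump representation~\eqref{def-h-Bin}, and read off $\lambda_i$; this avoids dualizing but requires the same binomial identity, so I would present whichever is more transparent.
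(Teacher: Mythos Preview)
Your proposal is correct and is essentially the paper's own argument: the paper also passes to moment coordinates of the mixing distribution (via an explicit change-of-basis matrix $Q_K^{ij}=\binom{K}{i}\binom{i}{j}(-1)^{i-j}$, which is the inverse direction of your factorial-moment basis), invokes the moment recursion you wrote out (packaged there as Lemma~\ref{lem-mom-g}) to see that the action on moments is an upper-triangular matrix $\tilde P_K$ with diagonal $\lambda_i$, and concludes $P_K$ and $\tilde P_K$ are similar. Your remark that the triangular identity is a purely linear-algebraic statement on $\reels^{K+1}$, not contingent on the initial law being a genuine Binomial mixture, is exactly the point the paper makes by checking $Q_K P_K=\tilde P_K Q_K$ directly.
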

	In the following, we denote by $\Lambda_0 = 1-f_Nq_{01}^-$,
	$\Lambda_1 = 1-(1-f_N)q_{10}^--f_Nq^+$ and 
	\[
	\forall i\in \llbracket 0,N \rrbracket,
	\qquad \lambda_i=(1-f_N)\Lambda_0^i + f_N\Lambda_1^i.
	\]
	We prove the previous proposition using the
	\begin{lem}\label{lem-mom-g}
		Let $X$ and $Y$ be two random variables in $[0,1]$ with cumulative distribution
		functions $G_X$ and $G_Y$. 
		We assume that there exist $\eta \in [0,1]$, $a, \bar{a} \in [0,1)$ and  $b, \bar{b} \in(0,1]$
		with $a+b \leq 1$, \(\bar{a}+\bar{b} \leq 1\) such that
		\begin{equation}\label{eq:trans_lineaire}
		G_Y(u) = \eta G_X\left(\frac{u-a}{b}\right)
		+ (1 - \eta) G_X\left(\frac{u-\bar{a}}{\bar{b}}\right).
		\end{equation}
		Then $\forall k \in \nat,\ \esp\left[Y^k\right] = \eta \esp\left[(a + bX)^k\right] + 
		(1 - \eta)\esp\left[(\bar{a} + \bar{b}X)^k\right].$
	\end{lem}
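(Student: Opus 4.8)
The plan is to prove the identity on moments directly from the cumulative-distribution-function relation~\eqref{eq:trans_lineaire}, using the fact that for a random variable $Z$ supported in $[0,1]$ one has $\esp[Z^k] = \int_0^1 \proba(Z^k > s)\, ds$, or more conveniently the layer-cake / integration-by-parts formula expressing $\esp[Z^k]$ as an integral against $1 - G_Z$. The key observation is that the two terms on the right-hand side of~\eqref{eq:trans_lineaire} are themselves (up to the weights $\eta$ and $1-\eta$) cumulative distribution functions of the affinely transformed variables $a + bX$ and $\bar a + \bar b X$: indeed, since $b>0$, the map $x \mapsto a + bx$ is increasing, so the c.d.f.\ of $a+bX$ is $u \mapsto \proba(a+bX \le u) = \proba(X \le (u-a)/b) = G_X((u-a)/b)$, and similarly for $\bar a + \bar b X$. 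The constraints $a,\bar a \in [0,1)$, $b,\bar b \in (0,1]$, $a+b\le 1$, $\bar a + \bar b \le 1$ guarantee that $a+bX$ and $\bar a + \bar b X$ again take values in $[0,1]$, so all the integrals below are over $[0,1]$.

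First I would fix $k \in \nat$ and write, for any $[0,1]$-valued variable $Z$ with c.d.f.\ $G_Z$, the formula
\begin{equation*}
\esp[Z^k] = \int_0^1 k\, u^{k-1}\bigl(1 - G_Z(u)\bigr)\, du,
\end{equation*}
which follows from Fubini applied to $Z^k = \int_0^Z k u^{k-1}\, du$ (valid because $Z \ge 0$), together with $\proba(Z > u) = 1 - G_Z(u)$ for $u \in [0,1)$ and the fact that the single point $u=1$ contributes nothing. Applying this to $Z = Y$ and substituting $G_Y(u) = \eta G_X((u-a)/b) + (1-\eta) G_X((u-\bar a)/\bar b)$, and writing $1 = \eta + (1-\eta)$, I get
\begin{equation*}
\esp[Y^k] = \eta \int_0^1 k u^{k-1}\Bigl(1 - G_X\bigl(\tfrac{u-a}{b}\bigr)\Bigr) du
+ (1-\eta) \int_0^1 k u^{k-1}\Bigl(1 - G_X\bigl(\tfrac{u-\bar a}{\bar b}\bigr)\Bigr) du.
\end{equation*}
Then, recognizing $u \mapsto G_X((u-a)/b)$ as the c.d.f.\ of the $[0,1]$-valued variable $a+bX$ (and similarly for the other term), the same layer-cake formula run in reverse identifies the first integral as $\esp[(a+bX)^k]$ and the second as $\esp[(\bar a + \bar b X)^k]$, which is exactly the claimed identity.

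I do not anticipate a genuine obstacle here; the statement is essentially a reformulation of "a mixture of c.d.f.'s corresponds to a mixture of laws, and moments are linear in the law." The only points requiring a little care are the bookkeeping at the endpoints $u=0$ and $u=1$ (ensuring the argument $(u-a)/b$ of $G_X$ stays in a range where $G_X$ is defined and the layer-cake identity applies — handled by the hypotheses on $a,b,\bar a,\bar b$), and the direction of the affine maps (monotone increasing since $b,\bar b>0$), which is what lets us pass between c.d.f.'s of $X$ and of $a+bX$. An alternative, slightly slicker route — which I would mention as a remark rather than carry out — is to note that~\eqref{eq:trans_lineaire} says precisely that $Y \overset{\mathcal L}{=} \eta\text{-mixture of } (a+bX) \text{ and } (\bar a + \bar b X)$, i.e.\ $Y$ equals $a+bX'$ with probability $\eta$ and $\bar a + \bar b X''$ with probability $1-\eta$ (for independent copies $X', X''$ of $X$), whence the moment identity is immediate by conditioning on which branch of the mixture is taken; the c.d.f.\ computation above is just the explicit verification that this distributional reading of~\eqref{eq:trans_lineaire} is correct.
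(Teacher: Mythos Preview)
Your proof is correct and follows essentially the same route as the paper's: both identify $G_X\bigl((u-a)/b\bigr)$ as the c.d.f.\ of $a+bX$ and then pass from the convex combination of c.d.f.'s to the same convex combination of moments. The only cosmetic difference is that the paper does the last step by ``differentiation, multiplication by $z^k$ and integration'', whereas you use the layer-cake formula $\esp[Z^k]=\int_0^1 k\,u^{k-1}\bigl(1-G_Z(u)\bigr)\,du$; your version has the minor advantage of not tacitly assuming any regularity of $G_X$.
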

	\begin{proof}
		First, note that \(G_X\left(\frac{u-a}{b}\right)\) is the cumulative distribution function
		of \(a + b X\).
		Second, for all random variables \(U,V, W\), we have 
		\[
		G_U(z) = \eta G_{V}(z) + (1-\eta)G_{W}(z)\Longrightarrow
		\esp[U^k] = \eta \esp[V^k] + (1-\eta)\esp[W^k].
		\]
		This last result is obtained by differentiation, multiplication by $z^k$ and integration.
		It ends the proof of the lemma.
	\end{proof}
	In the proof below, we use the classical conventions
	$
	\displaystyle \binom{i}{j}=0 \text{ when } j>i \text{ or } j<0.
	$
	\begin{proof}[Proof of Proposition~\ref{prop-spec}]
		We denote by $\nu_{t,K} =\left[\nu_{t,K}(0),\ \nu_{t,K}(1),\ \hdots,\ \nu_{t,K}(K)\right]$
		the distribution of $h_{t,K}$.
		Its transition matrix $\pk=\left(P_K^{ij}\right)_{0 \leq i,j \leq K}$ can be
		derived from Proposition~\ref{prop-Markov}:
		\begin{align*}
		&P_K^{ij}
		= (1-f_N) \displaystyle \binom{i}{i-j}(f_Nq_{01}^-)^{i-j}(1-f_Nq_{01}^-)^{j}
		\\
		&+
		f_N \sum_{l=0}^{i}\displaystyle \binom{i}{l}((1-f_N)q_{10}^-)^{l}(\Lambda_1 + f_Nq^+)^{i-l}
		\displaystyle \binom{K-i}{j-i+l}(f_Nq^+)^{j-i+l}(1-f_Nq^+)^{K-j-l}.
		\end{align*}
		Let us define the two matrices $\tilde{P}_K$ and $\qk$ such that for all $0 \leq i,j \leq K$:
		\[
		\tilde{P}_K^{ij}=
		f_N\displaystyle \binom{j}{i}\Lambda_1^{i}(f_Nq^+)^{j-i}
		+(1-f_N)\delta_{ij}\Lambda_0^{i}
		\quad \text{and} \quad
		Q_K^{ij} = \displaystyle \binom{K}{i}\displaystyle \binom{i}{j}(-1)^{i-j}.
		\]
		Then, assuming that $\nu_{t,K} \overset{\mathcal{L}}{=} \BinMix(K,g_t)$ and denoting by
		$U_{t} = \left[U_{t}^0,\ U_{t}^1,\ \hdots,\ U_{t}^K\right]$
		with $U_{t}^k = \int u^k g_t(du)$, we get by definition~\ref{def-bin-mix}:
		$ \nu_{t,K} = U_{t} Q_K.$
		Moreover, by Lemma~\ref{lem-mom-g} we have $U_{t+1} = U_{t} \tilde{P}_K$.
		Finally, by definition we have $\nu_{t+1,K} = \nu_{t,K} P_K$, so we obtain:
		\[
		\nu_{t+1,K} = U_{t+1}Q_K = U_{t} \tilde{P}_K Q_K
		= U_{t} Q_K Q_K^{-1} \tilde{P}_K Q_K 
		= \nu_{t,K} Q_K^{-1} \tilde{P}_K Q_K  = \nu_{t,K} P_K.
		\]
		A straightforward computation shows that $ Q_K P_K = \tilde{P}_K Q_K$.
		Thus $P_K$ and $\tilde{P}_K$ have the same spectrum.
		Finally, $\tilde{P}_K$ is a triangular matrix with $\lambda_i$ as diagonal elements.
	\end{proof}
	We deduce from Proposition~\ref{prop-spec} the rate of convergence of the law of \(h_{t,K}\).
	\begin{corollaire}\label{cor:conv-inv-binmix} 
		For all $0\leq K\leq N$, the sequence of the distributions 
		of the synaptic currents, $\left(\nu_{t,K}\right)_{t\geq1}$, converges exponentially
		fast to the unique invariant measure $\pi_K$.
		In particular, there exists $c\in \reels^+$ such that the distance in total
		variation between $\nu_{t,K}$ and $\pi_K$ satisfies:
		\[ 
		\forall t\geq 1,\ \ ||\nu_{t,K}-\pi_K||_{TV} 
		:= \frac{1}{2}\sum\limits_{l=0}^K |\nu_{t,K}(l)-\pi_K(l)|
		\leq c \lambda_1^{t}.
		\]
	\end{corollaire}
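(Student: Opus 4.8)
The plan is to exploit the change of basis $Q_K$ already introduced in the proof of Proposition~\ref{prop-spec}, which triangularises $P_K$: recall $Q_K P_K = \tilde P_K Q_K$, where $Q_K$ is lower triangular with diagonal $Q_K^{ii}=\binom{K}{i}\neq 0$ (hence invertible) and $\tilde P_K$ is upper triangular with diagonal entries $\lambda_0,\dots,\lambda_K$. Set $U_t := \nu_{t,K}Q_K^{-1}$ and $U^\ast := \pi_K Q_K^{-1}$. Since $\nu_{t,K}$ and $\pi_K$ are probability vectors one gets $U_t^0=U^{\ast 0}=1$ (no assumption on the law of $h_{-r+1,K}$ is needed here), while $\nu_{t+1,K}=\nu_{t,K}P_K$ and $\pi_K P_K=\pi_K$ translate, via the conjugation, into $U_{t+1}=U_t\tilde P_K$ and $U^\ast\tilde P_K=U^\ast$. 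Hence $d_t:=U_t-U^\ast$ satisfies $d_t^0=0$ and $d_{t+1}=d_t\tilde P_K$.

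Next I would dispose of the degenerate case $\lambda_1=0$. Expanding, $\lambda_1=(1-f_N)(1-f_Nq_{01}^-)+f_N(1-(1-f_N)q_{10}^--f_Nq^+)$, and under~\eqref{ass-param} this vanishes only when $f_N=q^+=1$; in that case $\xi_0^1=1$ and $\xi_t^1=1$ almost surely, and~\eqref{def-h-t0-Bin}--\eqref{def-h-Bin} force $h_{t,K}=K$ a.s. for every $t\geq 1$, so $\nu_{t,K}=\pi_K=\delta_K$ and the inequality is trivially true. Assume henceforth $\lambda_1>0$. From $\lambda_i-\lambda_{i+1}=(1-f_N)\Lambda_0^i(1-\Lambda_0)+f_N\Lambda_1^i(1-\Lambda_1)$ with $\Lambda_0\in[0,1)$ and $\Lambda_1\in[0,1]$ (again by~\eqref{ass-param}), one checks that $\lambda_1>0$ forces this difference to be strictly positive for every $i\geq 0$, so that $1=\lambda_0>\lambda_1>\dots>\lambda_K\geq 0$.

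The core of the proof is then a coordinatewise bound on $d_t$. Since $\tilde P_K$ is upper triangular with diagonal $\lambda_j$, the recursion reads $d_{t+1}^j=\lambda_j d_t^j+\sum_{i<j}\tilde P_K^{ij}d_t^i$. I would prove by induction on $j\in\llbracket 0,K\rrbracket$ that there is a constant $C_j$ with $|d_t^j|\leq C_j\lambda_1^{t-1}$ for all $t\geq 1$: the case $j=0$ is trivial, $j=1$ gives exactly $d_t^1=\lambda_1^{t-1}d_1^1$, and for $j\geq 2$ the induction hypothesis bounds the forcing term $e_t:=\sum_{i<j}\tilde P_K^{ij}d_t^i$ by $C\lambda_1^{t-1}$; solving the scalar recursion yields $d_t^j=\lambda_j^{t-1}d_1^j+\sum_{s=1}^{t-1}\lambda_j^{\,t-1-s}e_s$, and using $\lambda_j<\lambda_1$ together with $\sum_{s=0}^{t-1}\lambda_j^{\,t-1-s}\lambda_1^{s}\leq \lambda_1^{t}/(\lambda_1-\lambda_j)$ gives $|d_t^j|\leq C_j\lambda_1^{t-1}$. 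Translating back through $Q_K$, one has $\nu_{t,K}(l)-\pi_K(l)=\sum_{i=1}^K d_t^i Q_K^{il}$ (the $i=0$ term vanishes), whence $\|\nu_{t,K}-\pi_K\|_{TV}\leq \tfrac12\sum_{i,l}|d_t^i|\,|Q_K^{il}|\leq c\,\lambda_1^{t-1}=(c/\lambda_1)\lambda_1^{t}$, which is the claimed bound.

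The step I expect to be the only real obstacle is obtaining the clean rate $\lambda_1^{t}$ rather than $(\lambda_1+\varepsilon)^t$: this is precisely what the strict ordering $\lambda_1>\lambda_2>\dots$ secures, since it rules out the polynomial‑in‑$t$ factors a genuine Jordan block of $\tilde P_K$ would produce; once that ordering is established from the explicit form of the $\lambda_i$, the remainder is routine bookkeeping of geometric sums. Alternatively, one may phrase the same argument by noting that $\lambda_1>0$ makes the $\lambda_i$ pairwise distinct, hence $P_K$ diagonalisable, and expand $\nu_{1,K}$ in the eigenbasis (its component along the simple eigenvalue $1$ being $\pi_K$, because $\nu_{t,K}-\pi_K\to 0$ by Corollary~\ref{prop-conv-mes-inv}), which gives $\nu_{t,K}-\pi_K=\sum_{i\geq 1}c_i\lambda_i^{t-1}w_i$ and the same conclusion.
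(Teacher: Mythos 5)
Your proposal is correct and follows exactly the route the paper intends: the corollary is stated as a direct consequence of Proposition~\ref{prop-spec}, and you carry out that deduction using the same conjugation $Q_K P_K = \tilde{P}_K Q_K$ introduced there, with the triangular recursion in the moment coordinates $U_t$ giving the clean rate $\lambda_1^t$. The two points the paper leaves implicit --- the strict ordering $\lambda_0>\lambda_1>\cdots>\lambda_K$ (which rules out Jordan blocks and hence polynomial factors) and the degenerate case $\lambda_1=0$ --- are handled correctly in your argument.
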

	We discuss, in the second paragraph of Section~\ref{sec:discussion},
	the role played by this eigenvalue $\lambda_1$ in our main results.
	
	\subsubsection*{Memory lifetime}
	Under Assumption~\ref{ass-main:init-cond}, $h_{-r+1,K}$ follows its invariant distribution $\pi_K$,
	a Binomial mixture by Corollary~\ref{cor:mes-inv-binmix}. 
	Thus, by Proposition~\ref{prop:dyn-h}, the processes $(h_{t,K}^y)_{t\geq 1}$ follow also
	Binomial mixtures.
	Combining the inequality provided by Lemma~\ref{lem:mass_move_right},
	inequalities on Binomial tails (Lemma~\ref{lem:tail-bin})
	and a control on the tail of the mixing distribution $g^*$ and on the support of $g_t^1$,
	we prove Theorem~\ref{theo:main-result}.
	\begin{lem}\label{lem:mass_move_right}
		Under Assumption~\ref{ass-main:init-cond}, for all $\theta\in \llbracket 0,N \rrbracket$,
		$\Proba{h_t^0 >\theta} \leq \Proba{\pi_\infty >\theta}$.
	\end{lem}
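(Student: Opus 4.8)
The plan is to prove the stronger, $K$-conditioned statement that $h_{t,K}^0 \preceq \pi_K$ in the stochastic ordering for every $t\ge 1$ and every $K\in\llbracket 0,N\rrbracket$, and then average over $K$. The three ingredients are: (i) the end-of-learning estimate $h_{1,K}^0\preceq\pi_K$; (ii) the one-step forgetting kernel $P_K$ of Proposition~\ref{prop-Markov} is \emph{stochastically monotone}; (iii) $\pi_K$ is invariant for $P_K$ (Corollary~\ref{prop-conv-mes-inv}).

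For (i): by Assumption~\ref{ass-main:init-cond}, $h_{-r+1,K}\overset{\mathcal L}{=}\pi_K$, and \eqref{def-h-t0-Bin} with $\xi_0^1=0$ gives $h_{1,K}^0\overset{\mathcal L}{=} h_{-r+1,K}-\Bin\!\left(h_{-r+1,K},\,1-(1-q_{01}^-)^r\right)$. Realizing that Binomial as the actual number of strong synapses (among the $K$ relevant ones) that depress during the $r$ presentations of $\xi_0$, one gets $h_{1,K}^0\le h_{-r+1,K}$ in this coupling, hence $h_{1,K}^0\preceq h_{-r+1,K}\overset{\mathcal L}{=}\pi_K$. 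For (ii), I would build a monotone coupling of the forgetting dynamics following the synapse-counting argument used to prove Proposition~\ref{prop-Markov}. Encode a value $h$ by the strong-synapse set $\{1,\dots,h\}$ among the $K$ relevant synapses, so that $h\le h'$ corresponds to nested sets $S\subseteq S'$. Draw one noise signal $\xi_t$ and, for each relevant index $j$, one uniform variable $U_j$ used both to decide potentiation (if $j$ is currently weak and $\xi_t^1=\xi_t^j=1$, with threshold $q^+$) and depression (if $j$ is currently strong and $(\xi_t^1,\xi_t^j)\in\{(0,1),(1,0)\}$, with threshold $q_{01}^-$ resp. $q_{10}^-$). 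A short case analysis on whether $j$ is strong or weak in each configuration shows the updated strong-synapse sets remain nested: if $j$ stays strong in the smaller configuration it also stays strong in the larger one (same depression condition), and if $j$ newly potentiates in the smaller configuration then $\xi_t^j=1$, so in the larger configuration $j$ is either already strong and not depressed, or weak and newly potentiated. Since the law of the update depends only on $h$ (by exchangeability of the relevant synapses) and reproduces \eqref{def-h-Bin}, this is a genuine monotone coupling of $P_K$, whence $\mu\preceq\nu\Rightarrow\mu P_K\preceq\nu P_K$.

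Combining the three ingredients by induction on $t\ge 1$: the base case $t=1$ is (i); and if $\nu_{t,K}^0\preceq\pi_K$ then $\nu_{t+1,K}^0=\nu_{t,K}^0 P_K\preceq\pi_K P_K=\pi_K$ by (ii) and (iii). Hence $\Proba{h_{t,K}^0>\theta}\le\Proba{\pi_K>\theta}$ for every $\theta\in\llbracket 0,N\rrbracket$, every $t\ge1$ and every $K$. Finally, $K=\sum_{j=2}^{N+1}\xi_0^j$ is independent of $\xi_0^1$, so conditioning on $\xi_0^1=0$ leaves its law unchanged, and
$$
\Proba{h_t^0>\theta}=\sum_{K=0}^N\Proba{\hat K=K}\,\Proba{h_{t,K}^0>\theta}
\le \sum_{K=0}^N\Proba{\hat K=K}\,\Proba{\pi_K>\theta}=\Proba{\pi_\infty>\theta},
$$
the last equality by Corollary~\ref{prop-conv-mes-inv}.

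The main obstacle is establishing the stochastic monotonicity of $P_K$ in part (ii): the two Binomials subtracted in \eqref{def-h-Bin} make a coupling read off directly from the formula awkward, which is why I route the argument through the nested-set representation of the synapses, where monotonicity becomes a transparent (if slightly tedious) case check. Everything else—the learning-phase bound and the averaging over $K$—is routine.
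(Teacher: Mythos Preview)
Your proof is correct and takes a genuinely different route from the paper. The paper works entirely at the level of the \emph{mixing distributions}: under Assumption~\ref{ass-main:init-cond} the laws of $h_{t,K}^0$ and $\pi_K$ are Binomial mixtures $\BinMix(K,g_t^0)$ and $\BinMix(K,g^*)$, and the paper runs the short induction $G_t^0(x)\ge G^*(x)$ for all $x$ using the explicit operator $\mathcal R$ of Notation~\ref{func_eq} together with $\mathcal R(G^*)=G^*$ (Corollary~\ref{cor:mes-inv-binmix}). Concretely, $G_1^0(x)=G^*(x/\Lambda_0)\ge G^*(x)$, and since the two affine reparametrisations inside $\mathcal R$ preserve pointwise order of CDFs, $G_{t+1}^0=\mathcal R(G_t^0)\ge \mathcal R(G^*)=G^*$; the tail inequality for each $K$ then drops out of the $\BinMix$ representation. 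By contrast, you bypass the Binomial-mixture machinery altogether and argue directly on the chain via a synapse-level monotone coupling of $P_K$. Your route is more probabilistic and self-contained---it would go through even without Proposition~\ref{prop:dyn-h}---and it isolates the structural reason (stochastic monotonicity of $P_K$ plus invariance of $\pi_K$) behind the inequality. The paper's route is shorter precisely because it cashes in the structure already built in Propositions~\ref{prop:dyn-h}--\ref{prop-R-contracting}, turning the whole lemma into a two-line CDF comparison. Both arguments deliver the same $K$-conditioned inequality before averaging over $K$.
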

	\begin{proof}
		The proof is recursive and relies on the functional equation for the cumulative distribution of the synaptic currents~\eqref{eq-cdf-H-2} under Assumption~\ref{ass-main:init-cond}.
		From \eqref{eq-cdf-H-1}, we have for all $x\in[0,1]$, $G_1^0(x) = G^*\left(\frac{x}{\Lambda_0}\right) \geq G^*(x)$.
		Then, 
		\begin{align*}
		G_2^0(x) &= f_N G_1^0\left(\frac{x-f_Nq^+}{\Lambda_1}\right) + (1-f_N) G_1^0\left(\frac{x}{\Lambda_0}\right)
		\\
		&\geq f_N G^*\left(\frac{x-f_Nq^+}{\Lambda_1}\right) + (1-f_N) G^*\left(\frac{x}{\Lambda_0}\right)
		= G^*(x),
		\end{align*}
		and so forth so that for all $t\geq 1$ and $x$, $G_t^0(x) \geq G^*(x)$. It implies that for all
		$K,\ \theta \in \nat$, $\Proba{\BinMix(K,g_t^0)>\theta} \leq \Proba{\BinMix(K,g^*)>\theta}$, which ends the proof.
	\end{proof}
	\begin{lem}\label{lem:tail-bin}
		Let $S_N\overset{\mathcal{L}}{=}\Bin(N,p)$. Then, for all $\varepsilon \in (0,1)$
		\begin{align}
		\proba\left(S_N \geq Np(1+\varepsilon)\right)
		&\leq \exp\left(\frac{-Np\ \varepsilon^2}{2+\varepsilon}\right),\label{eq:ineq-bin-pos}\\
		\proba\left(S_N \leq Np(1-\varepsilon)\right)
		&\leq \exp\left(\frac{-Np\ \varepsilon^2}{2}\right).\label{eq:ineq-bin-neg}
		\end{align}
	\end{lem}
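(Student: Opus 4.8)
The statement is the standard Chernoff-type bound for the upper and lower tails of a Binomial random variable, so the plan is to derive both inequalities from the exponential moment method (Bernstein's trick). Write $S_N = \sum_{k=1}^N X_k$ where the $X_k$ are i.i.d.\ Bernoulli$(p)$, so $\esp(e^{sX_k}) = 1 - p + p e^s$ for any $s \in \reels$. Then for $s>0$, Markov's inequality applied to $e^{sS_N}$ gives
\[
\proba\left(S_N \geq Np(1+\varepsilon)\right) \leq e^{-sNp(1+\varepsilon)}\left(1 - p + p e^s\right)^N \leq \exp\left(-sNp(1+\varepsilon) + Np(e^s - 1)\right),
\]
using $1 + x \leq e^x$ with $x = p(e^s - 1)$. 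The exponent is minimized at $s = \log(1+\varepsilon)$, which yields the classical bound $\proba(S_N \geq Np(1+\varepsilon)) \leq \left(\frac{e^\varepsilon}{(1+\varepsilon)^{1+\varepsilon}}\right)^{Np}$. To obtain the cleaner form stated, I would then invoke the elementary inequality $(1+\varepsilon)\log(1+\varepsilon) - \varepsilon \geq \frac{\varepsilon^2}{2+\varepsilon}$ for $\varepsilon \geq 0$; this gives $\frac{e^\varepsilon}{(1+\varepsilon)^{1+\varepsilon}} \leq \exp\left(-\frac{\varepsilon^2}{2+\varepsilon}\right)$, and \eqref{eq:ineq-bin-pos} follows after raising to the power $Np$.

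For the lower tail \eqref{eq:ineq-bin-neg}, I would run the symmetric argument with $s < 0$: Markov's inequality on $e^{sS_N}$ (note the inequality direction flips since $s<0$) gives $\proba(S_N \leq Np(1-\varepsilon)) \leq \exp\left(-sNp(1-\varepsilon) + Np(e^s - 1)\right)$, and optimizing at $s = \log(1-\varepsilon) < 0$ produces $\proba(S_N \leq Np(1-\varepsilon)) \leq \left(\frac{e^{-\varepsilon}}{(1-\varepsilon)^{1-\varepsilon}}\right)^{Np}$. Here I would use the inequality $(1-\varepsilon)\log(1-\varepsilon) + \varepsilon \geq \frac{\varepsilon^2}{2}$ valid for $\varepsilon \in (0,1)$ (which is tighter on this side, giving the $2$ in the denominator rather than $2+\varepsilon$), to conclude $\frac{e^{-\varepsilon}}{(1-\varepsilon)^{1-\varepsilon}} \leq \exp\left(-\frac{\varepsilon^2}{2}\right)$ and hence \eqref{eq:ineq-bin-neg}.

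The main obstacle — such as it is — is verifying the two scalar calculus inequalities $(1+\varepsilon)\log(1+\varepsilon) - \varepsilon \geq \frac{\varepsilon^2}{2+\varepsilon}$ and $(1-\varepsilon)\log(1-\varepsilon) + \varepsilon \geq \frac{\varepsilon^2}{2}$. Both are handled by defining the difference of the two sides, checking it vanishes at $\varepsilon = 0$ along with its first derivative, and showing the second derivative is nonnegative on the relevant interval (for the first inequality one finds the second derivative equals $\frac{1}{1+\varepsilon} - \frac{4}{(2+\varepsilon)^3} \geq 0$; for the second, $\frac{1}{1-\varepsilon} - 1 \geq 0$). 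Since these are well known (they appear in standard references on concentration inequalities), I would either cite a textbook or relegate the elementary verification to a footnote, keeping the main proof to the two-line moment-generating-function computations above. Everything else is routine.
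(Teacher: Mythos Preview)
Your proposal is correct and follows exactly the route taken in the paper: the exponential-moment (Chernoff) bound, optimization at $s=\log(1\pm\varepsilon)$, and then reduction to a scalar inequality on $\log(1+z)$ --- the paper records these as $\log(1+z)\geq \tfrac{2z}{2+z}$ for $z>0$ and $\log(1+z)\geq \tfrac{z}{2}\tfrac{2+z}{1+z}$ for $-1<z\le 0$, which are algebraically equivalent to the two inequalities you state. One tiny slip: in your second-derivative check for the upper-tail inequality the constant should be $8$, not $4$, i.e.\ $f''(\varepsilon)=\tfrac{1}{1+\varepsilon}-\tfrac{8}{(2+\varepsilon)^3}$, which is still nonnegative on $[0,\infty)$.
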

	This Lemma is proved in~\ref{app-proof-lemmas}.
	
	We now give our main results.
	\begin{theoreme}\label{theo:main-result}		
		For $y\in \{0,1\}$, let $(h_t^y)_{t\geq 1}$ be the solutions of \eqref{def-h-t0-Bin}
		with $\xi_0^1=y$ and \eqref{def-h-Bin}.
		Let us assume that Assumptions~\ref{ass-main:init-cond} and~\ref{ass-main:on-f} hold and
		that $q_{01}^-$ and $q^+$ are fixed in $(0,1]$ and $q_{10}^-$ in $[0,1]$. 
		
		Then, for all $0<\delta<1$ and $r\in \nat^*$,
		there exists $N(\delta,r)\in \nat$ such that for all
		$N\geq N(\delta,r)$, there exist \(\theta_{\delta,N} \in \llbracket0,N\rrbracket \) and
		$\hat{t}(\delta,r,N)$ such that for all
		$1 \leq t \leq \hat{t}(\delta,r,N)$,		
		\[
		\proba\left(h_t^0>\theta_{\delta,N}\right)
		\vee
		\proba\left(h_t^1\leq\theta_{\delta,N}\right)
		\leq \delta.
		\]
	\end{theoreme}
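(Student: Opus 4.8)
The plan is to choose the threshold $\theta_{\delta,N}$ at an intermediate scale between the "noise level" $\esp(h_t^0)$ and the "signal level" $\esp(h_t^1)$, and then control the two error probabilities separately by Binomial-tail estimates, using the fact (Proposition~\ref{prop:dyn-h} and Corollary~\ref{cor:mes-inv-binmix}) that all the relevant laws are Binomial mixtures. First I would fix the size parameter: conditionally on $K=\sum_{j\geq 2}\xi_0^j$, we have $h_t^y\overset{\mathcal L}{=}\BinMix(K,g_t^y)$, and $K\overset{\mathcal L}{=}\Bin(N,f_N)$ is concentrated around $Nf_N\to\infty$ by Assumption~\ref{ass-main:on-f}. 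So I would first restrict to the high-probability event $\{K\in[(1-\varepsilon)Nf_N,(1+\varepsilon)Nf_N]\}$ (its complement has probability $\leq 2\exp(-Nf_N\varepsilon^2/3)$ by Lemma~\ref{lem:tail-bin}, which is $\leq\delta/4$ for $N$ large), and work conditionally on such a $K$.

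The bound on $p_e^0(t,\theta_{\delta,N})=\proba(h_t^0>\theta_{\delta,N})$ is the easy half. By Lemma~\ref{lem:mass_move_right}, $\proba(h_t^0>\theta)\leq\proba(\pi_\infty>\theta)$ for every $t\geq 1$ and every $\theta$, so it suffices to bound a single quantity that does not depend on $t$. Under $\pi_\infty=\BinoMix{\hat K,g^*}$, and since $\mathrm{Supp}(g^*)\subset[0,M_\infty]$ with $M_\infty=\frac{f_Nq^+}{f_Nq^++(1-f_N)q_{10}^-}$ by Corollary~\ref{cor:mes-inv-binmix}, conditionally on $K$ we have $h^0_{t,K}\leq_{st}\Bin(K,M_\infty)$, whose mean is $KM_\infty\sim Nf_NM_\infty$. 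Here the hypothesis is that $q_{01}^-,q^+$ are fixed and $q_{10}^-$ fixed in $[0,1]$; if $q_{10}^->0$ then $M_\infty<1$ is a fixed constant, and if $q_{10}^-=0$ then $M_\infty=1$ and one must instead control the tail of $g^*$ near $1$ (this is the "control on the tail of the mixing distribution $g^*$" alluded to in the text; I would use a separate lemma giving $g^*\big((1-s,1]\big)$ small, presumably because the $f_Nq_{01}^-$-contraction in $\mathcal R$ pulls mass away from $1$ at rate $\sim f_N$). Taking $\theta_{\delta,N}=\lceil\kappa Nf_N\rceil$ for a suitable constant $\kappa$ strictly between $M_\infty$ (or the effective support) and the signal level computed below, inequality~\eqref{eq:ineq-bin-pos} gives $\proba(\Bin(K,M_\infty)>\theta_{\delta,N})\leq\exp(-cNf_N)\to 0$, hence $\leq\delta/2$ for $N\geq N(\delta,r)$.

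The bound on $p_e^1(t,\theta_{\delta,N})=\proba(h_t^1\leq\theta_{\delta,N})$ is the delicate half, and I expect the main obstacle to be lower-bounding $h_t^1$ uniformly over $1\leq t\leq\hat t(\delta,r,N)$. Here I would track the support of $g_t^1$ from below. From \eqref{eq-cdf-H-2}, $G_1^1(u)=G_{-r+1}\big(\tfrac{u-1}{(1-q^+)^r}+1\big)$, so $g_1^1$ is $g_{-r+1}$ pushed forward by $u\mapsto 1-(1-q^+)^r(1-u)$; since $g_{-r+1}=g^*$ is supported in $[0,M_\infty]$ and $q^+$ is a fixed positive constant and $r\geq 1$, the lower end of $\mathrm{Supp}(g_1^1)$ is at least $1-(1-q^+)^r\geq q^+>0$, a fixed constant. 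Then for $t\geq 1$ the recursion $G_{t+1}^1=\mathcal R(G_t^1)$ contracts the lower end of the support according to $m\mapsto m(1-f_Nq_{01}^-)\wedge(m\Lambda_1+f_Nq^+)$; both branches decrease $m$ by at most a factor $1-O(f_N)$ per step, so after $t$ steps the lower end is still $\geq q^+(1-f_N)^{Ct}\geq q^+/2$ as long as $t\leq c/f_N$. On that event, conditionally on $K$, $h^1_{t,K}\geq_{st}\Bin(K,q^+/2)$ with mean $\sim Nf_Nq^+/2$, which exceeds $\theta_{\delta,N}=\lceil\kappa Nf_N\rceil$ provided $\kappa<q^+/2$; then \eqref{eq:ineq-bin-neg} gives $\proba(h^1_{t,K}\leq\theta_{\delta,N})\leq\exp(-cNf_N)\leq\delta/2$. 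Combining the two halves over the good $K$-event yields the claim with $\hat t(\delta,r,N)$ of order $1/f_N$ (indeed, with depression probabilities proportional to $f_N$ one re-runs the same argument with $q^-_{\bullet}\sim f_N$, getting contraction factor $1-O(f_N^2)$ and hence $\hat t$ of order $1/f_N^2$, matching Theorem~\ref{theo:main_result_q_N}). The one step requiring real care is making the lower-support decay estimate quantitative enough — i.e. showing the product of the per-step contraction factors stays bounded below over the whole window $[1,\hat t]$ — and, in the borderline case $q_{10}^-=0$, supplying the tail bound on $g^*$ near $1$ that replaces the crude support bound $M_\infty=1$.
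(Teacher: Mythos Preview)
Your overall strategy is the same as the paper's: use Lemma~\ref{lem:mass_move_right} to reduce the $h^0$-error to a time-independent tail of $\pi_\infty$, track the lower edge $m_t^1$ of $\mathrm{Supp}(g_t^1)$ for the $h^1$-error, and close with Chernoff bounds. Two minor differences are harmless: the paper avoids conditioning on $K$ altogether via the identity $\Bin\big(\Bin(N,f_N),u\big)\overset{\mathcal L}{=}\Bin(N,f_Nu)$, and for $q_{10}^-=0$ it supplies the missing tail bound on $g^*$ by computing $\Esp{Y^*}$ and $\mathrm{Var}(Y^*)$ explicitly from Lemma~\ref{lem-mom-g} and applying Chebyshev to get a cutoff $M_\delta$.

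There is, however, a genuine quantitative error in your support-contraction estimate. You claim that both branches of
\[
m_{t+1}^1=\big(m_t^1\Lambda_0\big)\wedge\big(m_t^1\Lambda_1+f_Nq^+\big)
\]
shrink $m$ by at most a factor $1-O(f_N)$. This is false for the second branch when $q_{10}^-$ is a fixed positive constant (exactly the hypothesis of this theorem): then $\Lambda_1=1-f_Nq^+-(1-f_N)q_{10}^-\approx 1-q_{10}^-$ is bounded away from $1$, and the affine map contracts toward its fixed point $M_\infty$ at that constant rate. Starting from $m_1^1\geq q^+$, one has $m_t^1=M_\infty+(m_1^1-M_\infty)\Lambda_1^{\,t-1}$ while $m_t^1>f_Nq^+/(\Lambda_0-\Lambda_1)$, and only afterwards does the $\Lambda_0=1-f_Nq_{01}^-$ branch take over. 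Consequently, with your fixed threshold $\theta=\lceil\kappa Nf_N\rceil$ for a constant $\kappa$ strictly above $M_\infty$, the condition $m_t^1>\kappa$ fails after $O(1)$ steps, not $c/f_N$ steps; your claimed order $\hat t\sim 1/f_N$ does not follow. The paper's fix is to choose the threshold much closer to the noise level, namely $\theta_{\delta,N}\approx Nf_NM_\delta+\sqrt{-2\log(\delta/2)\,Nf_NM_\delta}$ with $M_\delta$ a Chebyshev quantile of $g^*$, and then to solve the affine recursion for $m_t^1$ exactly (equation~\eqref{eq:dynamique_m_t1}); this yields $\hat t$ of logarithmic order in $Nf_N$ when $q_{10}^->0$ is fixed, given by the explicit formula in Remark~\ref{rem:main-result_q_fix}, and the $1/f_N^2$ order appears only under Assumption~\ref{ass-main:on-q10-q01} where $q_{10}^-\propto f_N$.
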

	In particular, we have $t_*(\delta,r,N) \geq \hat{t}(\delta,r,N)$.
	This result relies on the study of the mixing distributions $g^*$ and $g_t^1$.
	Thanks to Lemma~\ref{lem:mass_move_right}, we know that as long as $g_t^1$ is far enough from $g^*$,
	the probability of error,
	
	\(
	\proba\left(h_t^0>\theta\right) \vee \ \proba\left(h_t^1\leq\theta\right)
	\leq \proba\left(\BinMix(K,g^*)>\theta\right) \vee \ \proba\left(\BinMix(K,g_t^1)\leq\theta\right),
	\)
	
	\noindent is small enough.
	This condition appears as an inequality depending both on the time and the accepted error $\delta$.
	As long as this inequality holds, there exists a threshold \(\theta\) such that
	the probability of error is below $\delta$ for all previous times.
	\begin{example}
		We give in Remark~\ref{rem:main-result_q_fix} an explicit formula for the lower bound
		$\hat{t}$ on $t_*$ for any couple $(\delta,r)$. We give here a detailed result for a particular choice of parameters.
		Let $q^+ = q_{01}^- = 1$, $q_{10}^-$ small enough, and  $f_N = \frac{q_{10}^-}{3+q_{10}^-}$.
		Explicit computations give
		\[
		\hat{t}(\delta,r,N)=\Bigg\lfloor\frac{\log\left(\frac{1}{9}\right)
			\ \vee\ 
			\log\left(\frac{\sqrt{-2\log(\frac{\delta}{2})Nf_N}-16\log(\frac{\delta}{2})}
			{3Nf_N}\right)}
		{\log\left(1-4f_N\right)}\Bigg\rfloor.
		\]
		For instance, for $q_{10}^- = 0.005$ we get $f_N = 0.00167$ and
		
		\(
		\qquad \qquad
		\centering
		\hat{t}(\delta = 0.001,r = 1, N = 2.10^5) = 246
		\quad \text{and} \quad
		\theta_{\delta,N} = 122.
		\)
		
		\noindent We also give a formula when the depression probabilities depend on $N$ in
		Theorem~\ref{theo:main_result_q_N}.
	\end{example}
	\begin{proof}[Proof of Theorem~\ref{theo:main-result}]
		The proof follows these lines: from Lemma~\ref{lem:mass_move_right} we have that
		$\Proba{h_t^0 > \theta} \leq \pi_\infty\left(]\theta,+\infty[ \right)$. Hence, we propose a threshold $\theta$ based on the measure $\pi_\infty$ such that
		$
		\pi_\infty\left(]\theta,+\infty[ \right) \leq \delta
		$
		and then we bound the time before which $\Proba{h_t^1 \leq \theta} \geq \delta$.
		
		We split $\pi_\infty\left(]\theta,+\infty[ \right)$ in two terms.
		We recall that $\pi_\infty = \BinoMix{K,g^*}$ with $K \overset{\mathcal{L}}{=} \Bino{N,f_N}$
		and $[0,M_\infty]$ is the smallest interval containing the support of $g^*$. So
		\begin{align*}
		\pi_\infty\left(]\theta,+\infty[ \right)
		&= \int_{0}^{M_\infty}\Proba{\Bino{K,u} > \theta}g^*(du)
		= \int_{0}^{M_\infty}\Proba{\Bino{N,f_N u} > \theta}g^*(du)
		\\
		&\leq \Proba{\Bino{N,f_N M_{\delta}} > \theta}
		+ \int_{M_{\delta}}^{M_\infty}g^*(du).
		\end{align*}
		The second equality comes from the following property: assume \(		K\overset{\mathcal{L}}{=}\Bino{N,f_N}\) and conditionally on \(K\), \(X\) is independent of \(K\) with law \(\Bino{K,p}\), then \(X \overset{\mathcal{L}}{=} \Bino{N,f_N p}\).
		Let $Y^*$ be a random variable with distribution $g^*$.
		We propose a value for $M_{\delta}$ using the Bienaym\'e-Tchebytchev inequality:
		\[
		M_{\delta} = \left(\sqrt{\frac{2\text{Var}\left(Y_*\right)}{\delta}}+\Esp{Y_*}\right)
		\wedge M_\infty
		\quad \Rightarrow \quad 
		\int_{M_{\delta}}^{M_\infty}g^*(du) \leq \frac{\delta}{2}.
		\]
		We first fix $\theta_{\delta,N}$ such that
		$\Proba{\Bino{N,f_N M_{\delta}} \geq \theta_{\delta,N} + 1} \leq \frac{\delta}{2}$.
		To do so we apply Lemma~\ref{lem:tail-bin} with
		$\varepsilon = \frac{\theta_{\delta,N} + 1}{N f_N M_{\delta}} - 1$ and obtain:
		\[
		\theta_{\delta,N} = \bigg\lfloor N f_N M_{\delta}
		+ \sqrt{-2\log\left(\frac{\delta}{2}\right)N f_N M_{\delta}}
		-\log\left(\frac{\delta}{2}\right)\bigg\rfloor.
		\]
		
		We now bound the probability of error $\Proba{h_t^1 \leq \theta_{\delta,N}}$.
		Let $[m_t^1,M_t^1]$ be the smallest interval containing the support of $g_t^1$.
		Then, we get:
		\[
		\Proba{h_t^1 \leq \theta_{\delta,N}}
		=\int_{m_t^1}^{M_t^1}\Proba{\Bino{K,u} \leq \theta_{\delta,N}}g_t^1(du)
		\leq \Proba{\Bino{N,f_N m_t^1} \leq \theta_{\delta,N}}.
		\]
		Using Lemma~\ref{lem:tail-bin} with $\varepsilon = 1 - \frac{\theta_{\delta,N}}{N f_N m_t^1}$, we get
		\begin{align*}\label{eq:bin_tail-1}
		\Proba{\Bino{N,f_N m_t^1} \leq \theta_{\delta,N}}
		\leq \exp\left(-\frac{\left(N f_N m_t^1 - \theta_{\delta,N}\right)^2}{2 N f_N m_t^1}\right).
		\end{align*}
		Using the inequality $\sqrt{x} + \sqrt{y} \geq \sqrt{x+y}$ for all $x, y>0$, we obtain that if
		\begin{equation}\label{eq:ineq_m1}
		N f_N m_t^1 \geq \theta_{\delta,N} + \sqrt{-2\log(\delta)\theta_{\delta,N}} -2\log(\delta)
		\end{equation}
		then $\Proba{h_t^1 \leq \theta_{\delta,N}} \leq \delta $.
		Let us define $m_{\delta,N} := \frac{1}{Nf_N}\left(\theta_{\delta,N} + \sqrt{-2\log(\delta)\theta_{\delta,N}} -2\log(\delta)\right)$.
		Using the bound
		$\theta_{\delta,N} \leq \left(\sqrt{Nf_NM_{\delta}}
		+ \frac{\sqrt{-2\log(\frac{\delta}{2})}}{2}\right)^2$ we get
		\begin{equation}\label{eq:ineq_m_delta_N}
		Nf_Nm_{\delta,N} + \frac{3}{2}\log(\delta) = \left(\sqrt{\theta_{\delta,N}} + \frac{\sqrt{-2\log(\delta)}}{2}\right)^2
		\leq \left(\sqrt{M_{\delta}N f_N} +\sqrt{-2\log(\frac{\delta}{2})}\right)^2.
		\qquad \qquad \qquad \qquad \qquad \quad
		\end{equation}
		We now find $m_t^1$.
		From equation~\eqref{eq-cdf-H-1} and the definition of $\mathcal{R}$ (see Notation~\ref{func_eq}),
		we have
		\begin{equation*}
		\forall t \geq 1, \qquad m_{t+1}^1=m_t^1 \Lambda_0\ \wedge\ (m_t^1 \Lambda_1 + f_N q^+).
		\end{equation*}
		We note that for $N$ large enough such that
		$m_t^1 > \frac{f_N q^+}{\Lambda_0-\Lambda_1} \geq M_\infty$, we have
		\[
		\frac{f_N q^+}{1 - \Lambda_1}= M_\infty < m_t^1 \Lambda_1 + f_N q^+ < m_t^1 \Lambda_0 < m_t^1.
		\]
		Denoting by $t_c = \inf\{t \in \nat^*, m_t^1 \leq \frac{f_N q^+}{\Lambda_0-\Lambda_1}\}$, we obtain
		\begin{equation}\label{eq:dynamique_m_t1}
		m_{t}^1 = \left(\left(m_1^1 - M_\infty\right)\Lambda_1^{(t \wedge t_c) - 1} + M_\infty\right)
		\Lambda_0^{(t - t_c)\mathbbm{1}_{t>t_c}}.
		\end{equation}
		Let us now consider $q^+$, $q_{01}^-$ and $q_{10}^-$ fixed in $(0,1]$.
		By definition, $M_{\delta} \leq M_{\infty}$, hence
		\[
		Nf_Nm_{\delta,N}
		\leq \left(\sqrt{M_{\infty}N f_N} +\sqrt{-2\log\left(\frac{\delta}{2}\right)}\right)^2-
		\frac{3}{2}\log(\delta).
		\]
		Therefore, the inequality~\eqref{eq:ineq_m1} holds true as long as
		\begin{equation}\label{eq:ineq_finale_q_fix}
		t - 1 \leq \left(\Bigg\lfloor\frac{\log\Big(
			\frac{2 \sqrt{-2\log(\frac{\delta}{2})Nf_N M_\infty}-4\log(\frac{\delta}{2})}
			{Nf_N(m_1^1 - M_\infty)}\Big)}
		{\log(\Lambda_1)}\Bigg\rfloor \wedge t_c\right)_+
		\text{ with } (x)_+ = x \mathbbm{1}_{x \geq 0}.
		\end{equation}
		But $m_1^1 = 1 - (1-q^+)^r \geq q^+$ and both $\frac{f_N q^+}{\Lambda_0-\Lambda_1}$
		and $M_\infty = \frac{f_N q^+}{f_N q^+ + (1 - f_N) (q_{01}^- + q_{10}^-)}$ tends to $0$ with increasing $N$
		so there exists $N(\delta,r)$ such that for all $N \geq N(\delta,r)$
		we can remove ``$(\ )_+$'' in the inequality~\eqref{eq:ineq_finale_q_fix}:
		that is to say for all $N \geq N(\delta,r)$ such that,
		\[
		\frac{2 \sqrt{-2\log(\frac{\delta}{2})Nf_N M_\infty}-4\log(\frac{\delta}{2})}
		{Nf_N(m_1^1 - M_\infty)} < 1 \quad \text{and} \quad m_1^1 > \frac{f_N q^+}{\Lambda_0-\Lambda_1}.
		\]
		Using the fact that for all $\delta\in (0,1)$,
		$\sqrt{-2\log(\frac{\delta}{2})} \leq -2\log(\frac{\delta}{2})$ and $m_1^1 \geq q^+$ we get
		the two following conditions on $N$:
		\begin{align}\label{eq:condition_N}
		2\exp\left(-\frac{Nf_N(m_1^1 - M_\infty)}
		{4(\sqrt{Nf_NM_\infty} + 1)}\right)<\delta
		\ \text{ and }\
		\frac{f_{N}q^+}{f_{N}q^+ + (1-f_{N})q^-_{10}- f_Nq^-_{01}}< q^+.
		\end{align}
		
		In the particular case $q_{10}^- = 0$, we have $M_\infty = 1$ so the dynamics of $m_t^1$ is simply
		$m_{t}^1 = m_1^1 \Lambda_0^{t - 1}$. We compute $\Esp{Y_*}$ and $\text{Var}\left(Y_*\right)$ using Lemma~\ref{lem-mom-g} and equation~\eqref{eq-cdf-H-1}:
		\[
		\Esp{Y_*} = \frac{f_N^2 q^+}{1-\lambda_1} = \frac{f_Nq^+}{f_Nq^+ + (1-f_N)(q_{01}^- + q_{10}^-)}
		,\ \  
		\text{Var}\left(Y_*\right) = \frac{f_N^5 (1-f_N) {q^+}^2 {q_{01}^-}^2}{(1-\lambda_1)^2(1-\lambda_2)}.
		\]
		Hence,
		\begin{equation}\label{eq:M_delta}
		M_\delta = \frac{f_Nq^+\left(1+q_{01}^-\sqrt{\frac{2f_N}{\delta(1-\lambda_2)}}\right)}
		{f_Nq^+ + (1-f_N)(q_{01}^-+q_{10}^-)}.
		\end{equation}
		We note that $1-\lambda_2 \sim_{N\infty} 2f_N(q_{01}^-+q_{10}^-)$, so $M_\delta$ converges to $0$
		with increasing $N$. Thus, by inequality~\eqref{eq:ineq_m_delta_N}, there exists a
		$N(\delta,r)$ such that for all $N \geq N(\delta,r)$, $ m_{\delta,N} < 1$.
		We conclude that for all $N \geq N(\delta,r)$,
		the inequality~\eqref{eq:ineq_m1} holds true as long as
		\begin{equation*}
		t - 1 \leq \Bigg\lfloor\frac{\log\Big(\frac{\left(\sqrt{M_{\delta}N f_N}
				+\sqrt{-2\log(\frac{\delta}{2})}\right)^2 - \frac{3}{2}\log(\delta)}{Nf_N}\Big)}
		{\log(\Lambda_0)}\Bigg\rfloor > 0.
		\end{equation*}
	\end{proof}
	\begin{remarque}\label{rem:main-result_q_fix}
		Recall that 
		$M_{\infty} = \frac{f_Nq^+}{1-\Lambda_1}$, $m_1^1 = 1-(1-q^+)^r$,
		$M_\delta = \frac{f_Nq^+\left(1+q_{01}^-\sqrt{\frac{2f_N}{\delta(1-\lambda_2)}}\right)}
		{f_Nq^+ + (1-f_N)(q_{01}^-+q_{10}^-)}$,
		$\Lambda_0 = 1-f_Nq_{01}^-,\ \Lambda_1 = 1 - f_Nq^+ - (1-f_N)q^-_{10}$ and
		$\lambda_2 = f_N \Lambda_1^2 + (1-f_N)\Lambda_0^2$.
		
		We proved that under Assumptions~\ref{ass-main:init-cond} and~\ref{ass-main:on-f},
		for all $\delta$, $r$, $N\geq N(\delta,r)$
		($N$ for which the two conditions given by~\eqref{eq:condition_N} are satisfied),
		there exists $\theta_{\delta,N} \in \llbracket 0,N \rrbracket$ and $\hat{t}$ such that
		for all $1 \leq t\leq \hat{t}(\delta,r,N)$, $\proba\left(h_t^0>\theta_{\delta,N}\right) \vee\ \proba\left(h_t^1\leq\theta_{\delta,N}\right)\leq \delta.$
		
		In particular, if $q^-_{01},q^-_{10},q^+\in (0,1]$
		\begin{align*}
		\hat{t}(\delta,r,N) - 1 &=
		\Bigg\lfloor\frac{\log\Big(
			\frac{2 \sqrt{-2\log(\frac{\delta}{2})Nf_N M_\infty}-4\log(\frac{\delta}{2})}
			{Nf_N(m_1^1 - M_\infty)}\Big)}
		{\log(\Lambda_1)}\Bigg\rfloor
		\wedge
		{\Bigg\lfloor\frac{\log\left(\frac{f_N^2q^+ q_{01}^-}{(1 - \Lambda_1)(\Lambda_0 - \Lambda_1)(m_1^1-M_{\infty})}\right)}{\log(\Lambda_1)}\Bigg\rfloor},
		\end{align*}
		and if $q^-_{10} = 0$,
		$\hat{t}(\delta,r,N) - 1 = \Big\lfloor\frac{\log\Big(\frac{\left(\sqrt{M_{\delta}N f_N}
				+\sqrt{-2\log(\frac{\delta}{2})}\right)^2 - \frac{3}{2}\log(\delta)}{Nf_N}\Big)}
		{\log(\Lambda_0)}\Big\rfloor$.
	\end{remarque}
	\begin{theoreme}\label{theo:main_result_q_N}
		Assume Assumptions~\ref{ass-main:init-cond}, \ref{ass-main:on-f} and \ref{ass-main:on-q10-q01} are satisfied.
		Then, for all $\delta \in (0,1)$, $r$ large enough, there exists \(N(\delta, r)\in \nat\)
		such that for all \( N\geq N(\delta, r) \),
		\begin{equation*}
		\hat{t}(\delta,r,N) = t_c +
		\bigg\lfloor\frac{\log\left(C(\delta,r,N)\right)}{\log(\Lambda_0)}\bigg\rfloor,
		\end{equation*}
		with $t_c$  defined in~\eqref{eq:dynamique_m_t1} and \(C(\delta,r,N)\in(0,1)\) satisfies
		$\frac{\log\left(C(\delta,r,N)\right)}{f_N} \rightarrow + \infty$.
		Moreover, if $\lim a_N$, and $\lim b_N$ exist and are finite, 
		$\frac{\log\left(C(\delta,r,N)\right)}{\log(\Lambda_0)}$ is on the order of $\frac{1}{f_N^2}$.
	\end{theoreme}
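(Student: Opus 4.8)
The plan is to rerun the argument behind Theorem~\ref{theo:main-result} and Remark~\ref{rem:main-result_q_fix}, now keeping track of the quantities that no longer vanish once the depression probabilities scale as $q_{01,N}^- = a_N f_N$, $q_{10,N}^- = b_N f_N$. By Proposition~\ref{prop:dyn-h} and Assumption~\ref{ass-main:init-cond}, $h_{t,K}^y \overset{\mathcal{L}}{=} \BinMix(K,g_t^y)$, the cumulative distribution functions being obtained by iterating $\mathcal{R}$ (Notation~\ref{func_eq}) from $G^*$. Exactly as in the proof of Theorem~\ref{theo:main-result}, Lemma~\ref{lem:mass_move_right} together with the Binomial tail bound (Lemma~\ref{lem:tail-bin}) lets one fix a threshold $\theta_{\delta,N}$, read off from $\pi_\infty = \BinoMix{\hat{K},g^*}$, for which $p_e^0(t,\theta_{\delta,N})\le\delta$ at all times, and reduces the bound $p_e^1(t,\theta_{\delta,N})\le\delta$ to the single requirement $m_t^1 \ge m_{\delta,N}$, where $[m_t^1,M_t^1]$ is the support of $g_t^1$ and $m_{\delta,N} = \tfrac{1}{Nf_N}\big(\theta_{\delta,N}+\sqrt{-2\log\delta\,\theta_{\delta,N}}-2\log\delta\big)$ (see~\eqref{eq:ineq_m1}). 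Since $m_t^1$ satisfies $m_{t+1}^1 = m_t^1\Lambda_0 \wedge (m_t^1\Lambda_1+f_N q^+)$ it is decreasing, so $\hat t(\delta,r,N)$ is the last time $t$ with $m_t^1\ge m_{\delta,N}$.

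In the present scaling $\Lambda_0 = 1-a_N f_N^2$ and $\Lambda_1 = 1-f_N q^+-(1-f_N)b_N f_N$, so $1-\Lambda_1\sim\Lambda_0-\Lambda_1\sim f_N(q^++b)$ with $b=\lim b_N$; and, contrary to Theorem~\ref{theo:main-result}, both $M_\infty = \tfrac{f_N q^+}{1-\Lambda_1} = \tfrac{q^+}{q^++(1-f_N)b_N}$ and $\tfrac{f_N q^+}{\Lambda_0-\Lambda_1} = \tfrac{q^+}{q^++(1-f_N)b_N-a_N f_N}$ stay bounded away from $0$, with common limit $\tfrac{q^+}{q^++b}$. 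For $r$ large enough that $m_1^1 = 1-(1-q^+)^r$ exceeds $\tfrac{q^+}{q^++b}$ (hence, for $N\ge N(\delta,r)$, exceeds $\tfrac{f_N q^+}{\Lambda_0-\Lambda_1}$), the dynamics~\eqref{eq:dynamique_m_t1} of $m_t^1$ has the stated two phases: a geometric decay at rate $\Lambda_1$ toward $M_\infty$, down to the crossover $t_c = \inf\{t\ge1: m_t^1\le\tfrac{f_N q^+}{\Lambda_0-\Lambda_1}\}$, followed by a geometric decay at rate $\Lambda_0$, with $m_{t_c+k}^1 = m_{t_c}^1\Lambda_0^{k}$ and $m_{t_c}^1 = (m_1^1-M_\infty)\Lambda_1^{t_c-1}+M_\infty \in (M_\infty,\tfrac{f_N q^+}{\Lambda_0-\Lambda_1}]$. (One also checks $t_c = O(f_N^{-1}\log(1/f_N))$, of lower order than the second term below.)

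The crux is that the error constraint first fails in the slow $\Lambda_0$-phase, i.e.\ $m_{\delta,N}<m_{t_c}^1$ for $N\ge N(\delta,r)$; granting this, $m_t^1\ge m_{t_c}^1>m_{\delta,N}$ throughout $[1,t_c]$, and solving $m_{t_c}^1\Lambda_0^{t-t_c}\ge m_{\delta,N}$ gives
\[
\hat t(\delta,r,N) = t_c + \bigg\lfloor\frac{\log C(\delta,r,N)}{\log\Lambda_0}\bigg\rfloor,\qquad C(\delta,r,N)=\frac{m_{\delta,N}}{m_{t_c}^1}\in(0,1).
\]
To establish $m_{\delta,N}<m_{t_c}^1$ we estimate $g^*$ through Lemma~\ref{lem-mom-g} applied to $\mathcal{R}(G^*)=G^*$: with $Y_*\sim g^*$, $\Esp{Y_*} = \tfrac{f_N^2 q^+}{1-\lambda_1} = \tfrac{q^+}{q^++(1-f_N)(a_N+b_N)}$, and the analogous second-moment identity yields $\text{Var}(Y_*)$ of order $a_N^2 f_N$ (in particular $\to0$). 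Hence $M_\delta = \big(\sqrt{2\text{Var}(Y_*)/\delta}+\Esp{Y_*}\big)\wedge M_\infty$ and $m_{\delta,N} = M_\delta + O\big((Nf_N)^{-1/2}\big)$ both converge to $\tfrac{q^+}{q^++a+b}$, which lies strictly below $M_\infty^{\lim} = \tfrac{q^+}{q^++b}$: the gap $M_\infty-\Esp{Y_*}$ (of order $a_N$) dominates the variance correction $\sqrt{2\text{Var}(Y_*)/\delta}$ (of order $a_N\sqrt{f_N}$) and the $O((Nf_N)^{-1/2})$ threshold-rounding term — and this is precisely where the limits $\tfrac{b_N^2}{Nf_N a_N}\to0$ and $Nf_N a_N\to+\infty$ of Assumption~\ref{ass-main:on-q10-q01} are used. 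Thus $C(\delta,r,N)\in(0,1)$ stays bounded away from $1$, i.e.\ $\log\big(1/C(\delta,r,N)\big)/f_N\to+\infty$. Finally $\log\Lambda_0 = \log(1-a_N f_N^2)\sim -a_N f_N^2$ while $\log C(\delta,r,N)\to\log\tfrac{q^++b}{q^++a+b}$, so $\tfrac{\log C(\delta,r,N)}{\log\Lambda_0}\sim\tfrac{1}{a_N f_N^2}\log\tfrac{q^++a+b}{q^++b}$ is of order $1/f_N^2$, which is the ``moreover''.

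The step I expect to be hardest is the control of the moments of the stationary mixing distribution $g^*$ in the $N$-dependent regime, and in particular proving that the macroscopic gap $M_\infty-\Esp{Y_*}$ between the support endpoint and the mean of $g^*$ dominates every correction term: this is exactly what keeps $C$ bounded away from $1$ and produces a genuine $\Theta(1/f_N^2)$ relaxation window at rate $\Lambda_0$, and it is where all the limiting conditions of Assumption~\ref{ass-main:on-q10-q01} get consumed. Degenerate sub-cases ($q^+=1$, $\lim b_N=0$, or $a_N\to0$) only need minor extra bookkeeping — e.g.\ $t_c=1$ and $m_t^1=m_1^1\Lambda_0^{t-1}$ when $m_1^1\le\tfrac{f_N q^+}{\Lambda_0-\Lambda_1}$ — and do not change the structure.
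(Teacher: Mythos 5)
Your skeleton is the same as the paper's: you reuse the threshold $\theta_{\delta,N}$ built from $\pi_\infty$, reduce $p_e^1\le\delta$ to the sufficient condition $m_t^1\ge m_{\delta,N}$, exploit the two-phase dynamics~\eqref{eq:dynamique_m_t1} with crossover $t_c$, and land on $\hat t=t_c+\lfloor\log C/\log\Lambda_0\rfloor$. The only cosmetic difference is the choice of $C$: you take the exact ratio $m_{\delta,N}/m_{t_c}^1$, while the paper takes the explicit upper bound $\sqrt{M_\delta/(m_1^1\wedge M_\infty)}+2\sqrt{-\log(\delta/2)/\left((m_1^1\wedge M_\infty)Nf_N\right)}$ obtained from~\eqref{eq:ineq_m_delta_N} and $m_{t_c}^1\ge m_1^1\wedge M_\infty$; both are legitimate since $\hat t$ only needs to be a lower bound, and your moment computations for $g^*$ (namely $\Esp{Y_*}$ and $\mathrm{Var}(Y_*)=O(a_N^2f_N)$ for bounded $a_N,b_N$) match the paper's formulas.

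The genuine gap is that you establish $C<1$ (equivalently $m_{\delta,N}<m_{t_c}^1$) only in the regime $a_N\to a>0$, $b_N\to b<\infty$. The theorem --- whose ``moreover'' clause is explicitly conditional on the limits being finite --- is also meant to cover $a_N\to 0$ with $b_N\to b>0$, and $a_N$ and/or $b_N\to+\infty$; the paper's proof is precisely a four-way case analysis over these regimes, and that case analysis is the actual content of the theorem. Your dismissal of $a_N\to0$ as ``minor bookkeeping'' hides a concrete error: in that regime $M_\delta\to M_\infty$, the gap $M_\infty-\Esp{Y_*}$ is of order $a_N$ and vanishes, so $C\to1$ and your stated conclusion that $C$ stays bounded away from $1$ is false; what must be shown instead is the quantitative bound that $1-C\gtrsim a_N$ dominates the $O(1/\sqrt{Nf_N})$ threshold-rounding term, which is exactly where $Nf_Na_N\to+\infty$ is consumed. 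Likewise, when $b_N\to+\infty$ the term $2\sqrt{-\log(\delta/2)/(M_\infty Nf_N)}$ grows like $\sqrt{b_N/(Nf_N)}$ and is tamed only via $b_N/(Nf_N)\to0$ and $b_N^2/(Nf_Na_N)\to0$, and the resulting $\hat t$ is of order $1/(f_N^2b_N)$ or $1/(f_N^2a_N)$ rather than $1/f_N^2$ --- consistent with the theorem restricting the $1/f_N^2$ claim to finite limits, but entirely absent from your argument. To complete the proof you would need to run the comparison of the three terms $\frac{a_N}{q^++a_N+b_N}$, $a_Nf_N\sqrt{2f_N/(\delta(1-\lambda_2))}$ and $\frac{-\log(\delta/2)}{(m_1^1\wedge M_\infty)Nf_N}$ in each of the admissible asymptotic regimes of $(a_N,b_N)$, as the paper does.
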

	We note that $\log(\Lambda_0) = \log(1 - a_N f_N^2) \sim_{N\infty} - a_N f_N^2$.
	Concerning $C(\delta,r,N)$ (and then $\hat{t}(\delta,r,N)$),
	it mainly depends on the different large $N$ asymptotic of $a_N$ and $b_N$.
	We detail in Remark~\ref{rem:main-result1} the different large $N$ asymptotic of $\hat{t}(\delta,r,N)$.
	\begin{proof}
		We use the results proved in the  proof of Theorem~\ref{theo:main-result}.
		From the dynamics of $m_t^1$ given by the equation~\eqref{eq:dynamique_m_t1} 
		and the bound $m_{t_c}^1 \geq m_1^1 \wedge M_\infty$,
		we obtain that the inequality~\eqref{eq:ineq_m1} is satisfied as long as
		\begin{equation}\label{eq:ineq_finale}
		t - 1 \leq t_c +
		\left(\Bigg\lfloor\frac{\log\left(\frac{m_{\delta,N}}
			{m_{t_c}^1}\right)}{\log(\Lambda_0)}\Bigg\rfloor\right)_+
		\leq t_c +
		\left(\Bigg\lfloor\frac{\log\left(\frac{m_{\delta,N}}
			{(m_1^1 \wedge M_\infty)}\right)}{\log(\Lambda_0)}\Bigg\rfloor\right)_+.
		\end{equation}
		We can remove ``$(\ )_+$'' in the last inequality if there exists $N_0$ such that
		\[
		\forall N \geq N_0, \qquad \frac{m_{\delta,N}}{(m_1^1 \wedge M_\infty)} < 1.
		\]
		Using the inequality~\eqref{eq:ineq_m_delta_N}, we deduce that this is the case if
		\begin{align}\label{eq:ineq_condition_existence_N0}
		C(\delta,r,N) = \sqrt{\frac{M_{\delta}}{m_1^1 \wedge M_\infty}}
		+2\sqrt{\frac{-\log(\frac{\delta}{2})}{ \left(m_1^1 \wedge M_\infty\right) N f_N}}
		< 1.
		\end{align}
		From the previous computation of $M_\delta$, see equation~\eqref{eq:M_delta}, we obtain
		\[
		\frac{M_{\delta}}{M_{\infty}}
		= \left(1-\frac{(1-f_N)a_N}{q^+ + (1-f_N)(a_N+b_N)}\right)
		\left(1+a_N f_N\sqrt{\frac{2f_N}{\delta(1-\lambda_2)}}\right).
		\]
		Thus, we compare the three terms (recalling that $m_1^1 = 1-(1-q^+)^r$)
		\[
		\frac{a_N}{q^+ + a_N + b_N}, \quad a_N f_N\sqrt{\frac{2f_N}{\delta(1-\lambda_2)}}
		\quad \text{and} \quad
		\frac{-\log(\frac{\delta}{2})}{ \left((1-(1-q^+)^r) \wedge M_\infty\right) N f_N}.
		\]
		First, $(1-\lambda_2) \sim_{N\infty} 2f_N^2(a_N+b_N+q^+)$. Then, we have to separate the different
		cases:
		\begin{itemize}[leftmargin=*]
			\item If $b_N$ tends to $+\infty$,
			both $M_{\delta}$ and $M_\infty$ converge to $0$.
			Hence, $\left(1 - (1-q^+)^r\right) \wedge M_\infty = M_\infty$
			and the Assumption~\ref{ass-main:on-q10-q01}
			, in particular $\lim\limits_{N \infty}
			\ q_{01,N}^- = \lim\limits_{N \infty}\ q_{10,N}^-
			= \lim\limits_{N \infty} \frac{b_N^2}{N f_N a_N}
			= \lim\limits_{N \infty} \frac{b_N}{N f_N}
			= 0$,
			enables us to conclude that if $b_N = \smallO{a_N}$,
			$C(\delta,r,N) \sim_{N\infty} \sqrt{\frac{b_N}{a_N}} + 2\sqrt{\frac{-\log(\frac{\delta}{2})b_N}{q^+ N f_N}} \rightarrow 0$,
			else,
			$C(\delta,r,N) \sim_{N\infty} \left(1-\frac{a_N}{a_N + b_N}\right)$
			so the inequality~\eqref{eq:ineq_condition_existence_N0} holds true for any $r$
			and for a $N$ large enough.
			
			\item If $a_N$ tends to $+\infty$ and not $b_N$,
			then $M_{\delta}$ converges to $0$ and $M_\infty$ converges
			to $1$ (resp. $\frac{q^+}{q^+ + b}$) if $b_N$ converges to $0$ (resp. $b$).
			Thus, $C(\delta,r,N)$ converges to $0$ with large $N$ and for any $r$,
			the inequality~\eqref{eq:ineq_condition_existence_N0} is satisfied.
			
			\item If $a_N$ tends to $0$ and $b_N$ converges to $b>0$,
			then $M_{\delta}$ converges to $M_\infty$. Then, there exists $r_0$
			such that $\left(1 - (1-q^+)^{r_0}\right) \wedge M_\infty = M_\infty$.
			Using the assumption $\lim\limits_{N \infty} a_N N f_N = +\infty$ we have
			for all $r\geq r_0$, $C(\delta,r,N)\sim_{N\infty} \left(1-\frac{a_N}{q^+ + b}\right)$.
			
			\item
			In all other cases, $M_{\delta}$ and $M_\infty$ converges to a value in $(0,1)$.
			Moreover, $M_{\delta} < M_\infty$ so there exists $r_0$ such that for all $r \geq r_0$,
			$\left(1 - (1-q^+)^{r_0}\right) \wedge M_\infty > M_\delta$,
			so $C(\delta,r,N)$ converges in $(0,1)$ with large $N$.
		\end{itemize}
	\end{proof}
	\begin{remarque}\label{rem:main-result1}
		In the large $N$ asymptotic (under Assumptions~\ref{ass-main:on-f} and~\ref{ass-main:on-q10-q01}),
		we can compute the terms equivalent to $\hat{t}$ in the different $a_N$, $b_N$ cases
		($a\in \reels_*^+$ and $b\in \reels^+$):
		\begin{table}[h]
			\begin{center}
				\begin{tabular}{|c|c|}
					\hline 
					conditions on $a_N,\ b_N$ and $r$
					& $\hat{t}(\delta,r,N)$ for large \(N\)
					\\
					\hline
					$b_N \rightarrow +\infty,\  b_N=\smallO{a_N} ,\ \forall r $
					& $\frac{\log\left(\sqrt{\frac{b_N}{a_N}} + 2\sqrt{\frac{-\log(\frac{\delta}{2})b_N}{q^+ N f_N}}\right)}{f_N^2a_N}$
					\\
					\hline
					$a_N,b_N \rightarrow +\infty \text{ of same order},\ \forall r $
					& $-\frac{\log\left(1-\frac{a_N}{a_N + b_N}\right)}{2f_N^2 a_N}$
					\\
					\hline 
					$a_N = \smallO{b_N},\ b_N \rightarrow +\infty,\ \forall r $
					& $\frac{1}{2f_N^2b_N}$
					\\
					\hline 
					$a_N \rightarrow +\infty,\ b_N \rightarrow b\in \reels^+,\ \forall r $
					& $\frac{-\log\left(\sqrt{\frac{q^+}{\big((1-(1-q^+)^r) \wedge \frac{q^+}{q^+ + b}\big)a_N}}+
						2\sqrt{\frac{-\log(\frac{\delta}{2})(q^+ + b)}{q^+ N f_N}}\right)}{f_N^2a_N}$
					\\
					\hline
					$a_N \rightarrow 0,\ b_N \rightarrow b>0,\ \forall r>r_0 $
					& $\frac{1}{2f_N^2(q^+ + b)}$
					\\
					\hline 
					$a_N = a,\ b_N \rightarrow 0 \text{ ou } b_N = 0,\ \forall r>r_0 $
					& $-\frac{\log\left(\frac{q^+}{(1-(1-q^+)^r)(q^+ + a)}\right)}{2f_N^2 a}$
					\\
					\hline 
					$a_N = a,\ b_N = b,\ \forall r>r_0 $
					& $-\frac{\log\left(1-\frac{a}{q^+ + a + b}\right)}{2f_N^2 a}$ \\
					\hline
				\end{tabular}
			\end{center}
			\caption{The large $N$ equivalent of $\hat{t}(\delta,r,N)$
				in function of \(a_N\) and \(b_N\).}\label{fig:table_asympt}
		\end{table}
	\end{remarque}
	\begin{remarque}\label{rem:main-result2}
		Note that we have also proved the following result: 
		
		\noindent For every \(\delta > 0\) and \(N\) large enough, there exists \(r_0\) such that,
		if the initial signal is presented at least \(r_0\) times, then it is \textit{well}
		memorized after at least \(\hat{t}(\delta,r,N)\) presentations of \textit{noisy} signals.
		Moreover, in the large $r$ asymptotic, $h_{1,K}^0\overset{\mathcal{L}}{=} \delta_0$ (dirac in $0$)
		and $h_{1,K}^1\overset{\mathcal{L}}{=} \delta_K$ (dirac in $K$).
		Thus, the initial error is null.
		However, the $\hat{t}$ increases with $r$ until reaching a threshold value
		which is given by the expression of Remarks~\ref{rem:main-result_q_fix} and~\ref{rem:main-result1}
		replacing the quantities $m_1^1$ by $1$.
	\end{remarque}
	\section{Simulations}\label{sec-4}
	\vspace{-0.5em}
	Our code follows these lines. We draw $\xi_0$ and $K = \sum_{j=2}^{N+1} \xi_0^j$.
	We simulate a trajectory of $h_{t,K}$ long enough to be under the invariant measure.
	We perform $r$ presentations of the signal to be learnt and then compute the
	trajectories of $h_{t,K}^y$, $y\in \{0,1\}$.
	We reiterate this procedure $N_{MC} = 10^7$ times to get an approximation of
	the distributions of $h_{t}^y$.
	
	The result of Theorem~\ref{theo:main-result} is interesting for large values of $Nf_N$ (small errors)
	combined with a small $f_N$ (non-negligible $\hat{t}$).
	In this context, we need to compute many trajectories before the synaptic currents cross
	a reasonable threshold $\theta$.
	\begin{figure}[h] 
		\centering 
		\subfloat[]{
			\includegraphics[width=0.45\textwidth]{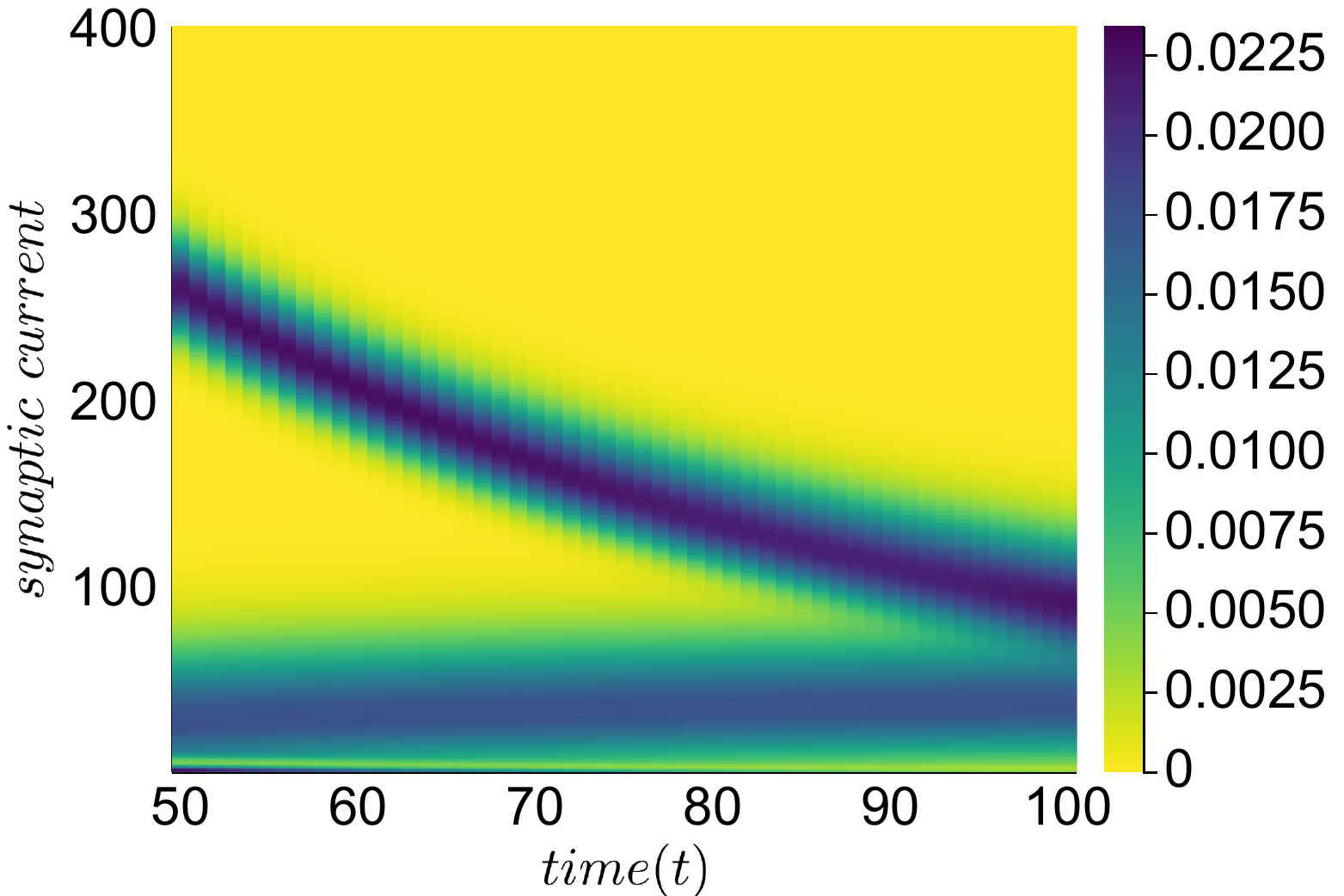} \label{sub:heatmap-f05-qpm5-sup13} }
		\subfloat[]{ \includegraphics[width=0.45\textwidth]{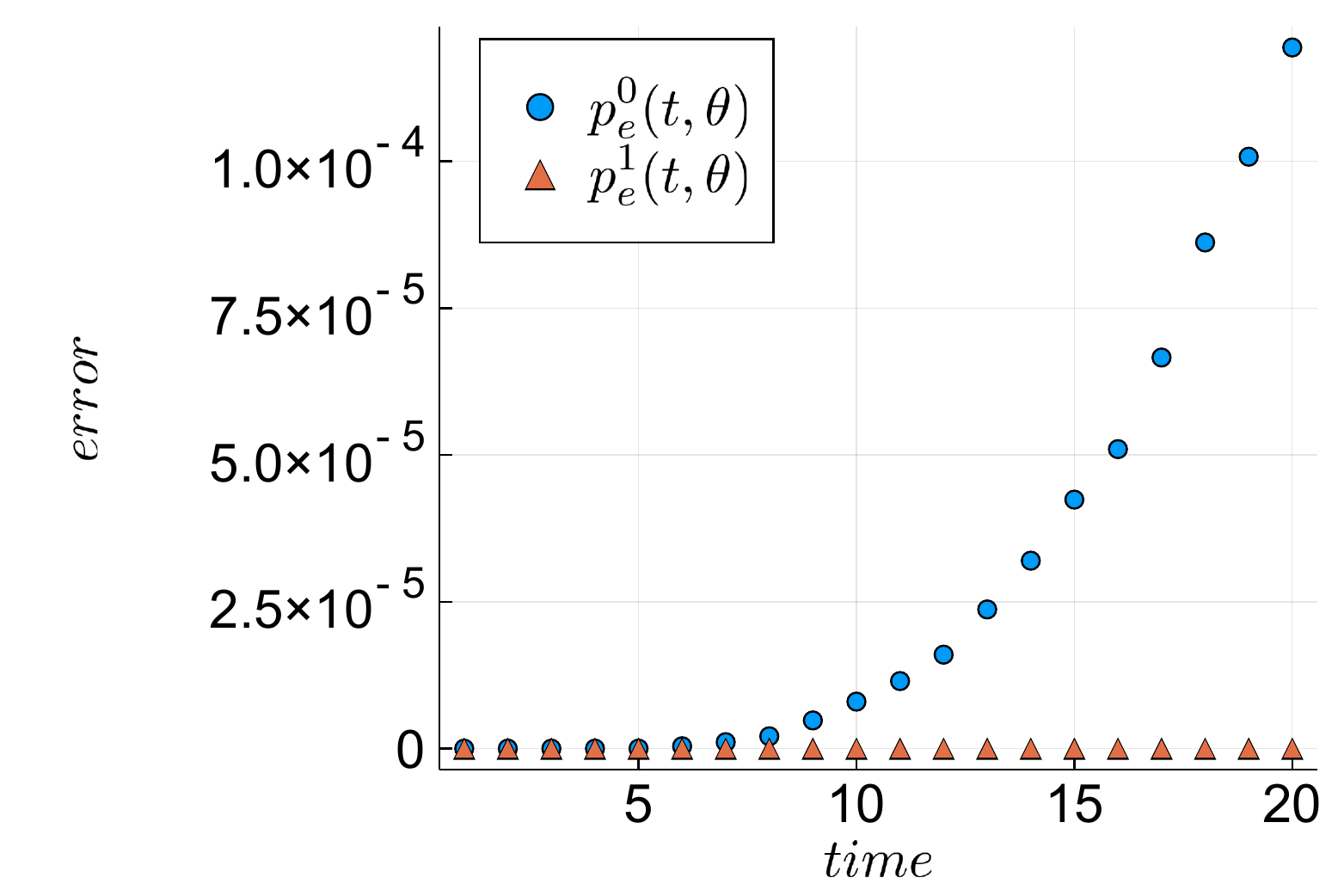} \label{sub:erreur-f05-qpm5-time20} }
		\\ 
		\subfloat[]{ \includegraphics[width=0.45\textwidth]{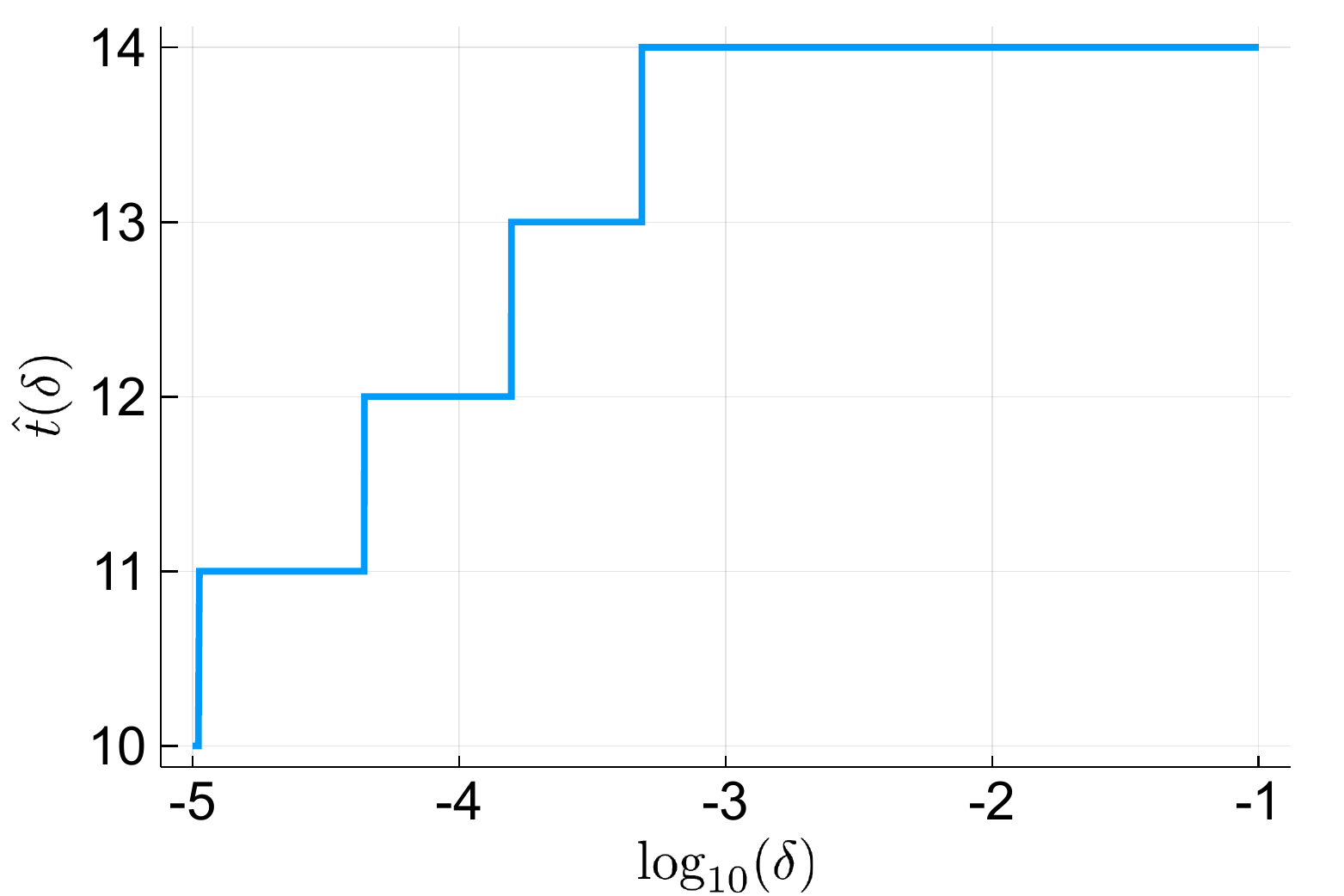}
			\label{sub:t_chapeau-f05-qpm5} } 
		\subfloat[]{
			\includegraphics[width=0.45\textwidth]{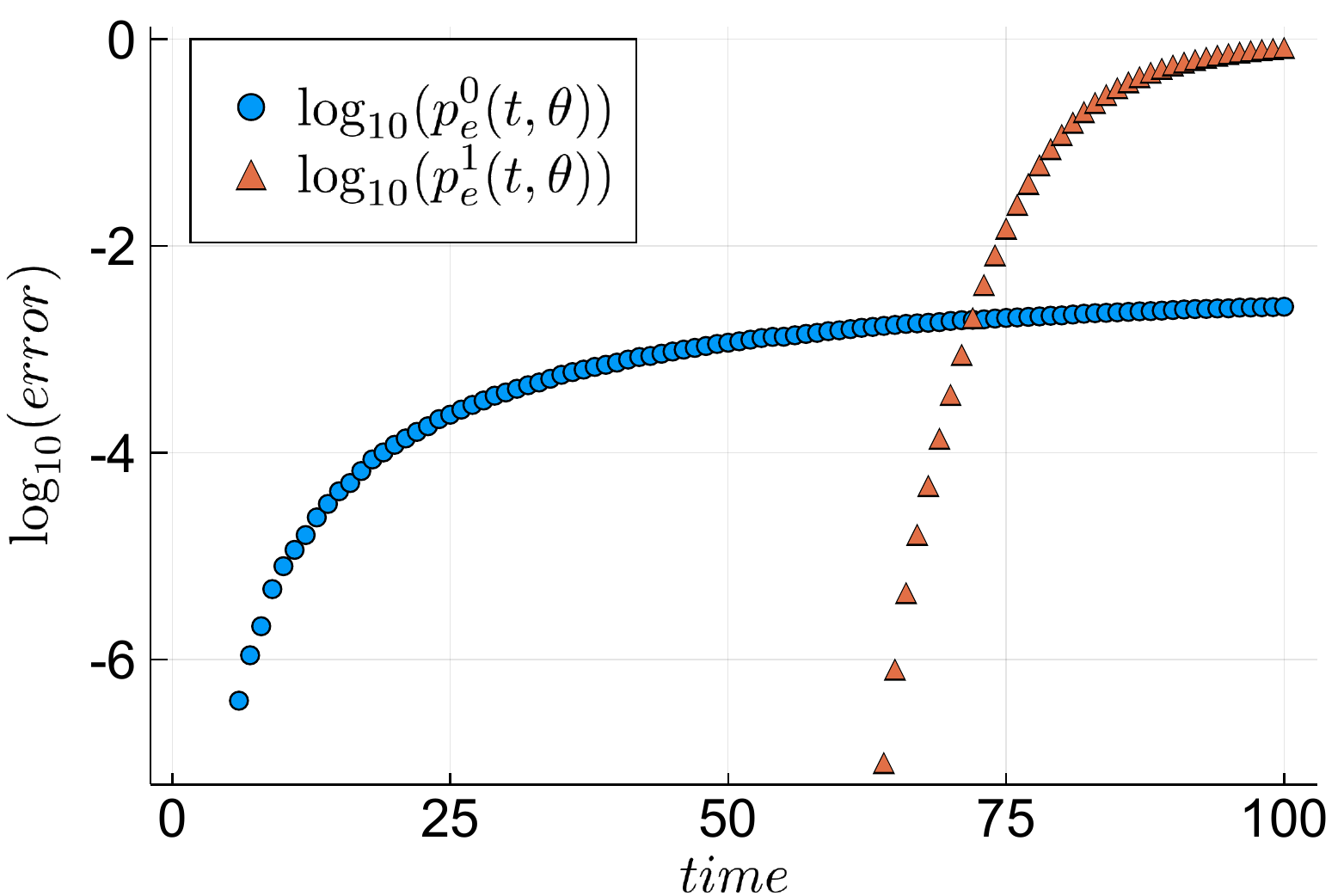} \label{sub:erreur-f05-qpm5-time100} }
		\caption{
			\eqref{sub:heatmap-f05-qpm5-sup13}
			The sum of the distributions of $h_{t,K}^0$ and $h_{t,K}^1$.
			The colour bar gives the probability values.
			\eqref{sub:erreur-f05-qpm5-time20} (resp.~\eqref{sub:erreur-f05-qpm5-time100})
			The numerical errors $p_e^0(t,\theta)$ and $p_e^1(t,\theta)$
			on a short (resp. large) timescale.
			\eqref{sub:t_chapeau-f05-qpm5}
			$\hat{t}$ as a function of $\delta$ on the logarithmic to the base ten scale.
			Parameters: $\theta = 117,\ N = 20\ 000,\ f_N = 0.05,\ 
			q^+ = q^-_{01}= 0.5,\ q^-_{10}= 0.05,\ r=3.$
		}\label{image.simu-result} 
	\end{figure}
	
	In Figure~\ref{sub:heatmap-f05-qpm5-sup13}, the top (resp. bottom) roughly 
	represents the distribution of $h_{t,K}^1$ (resp. $h_{t,K}^0$).
	Before time $t=50$, the distribution of $h_t^0$ is highly concentrated in $0$.
	Indeed, looking carefully to Figure~\ref{sub:heatmap-f05-qpm5-sup13}, we can observe a residue
	of this high probability (dark blue) for very weak synaptic currents
	until time $t=65$, see also Figure~\ref{sub:histogrammes-f05-qpm5-h0}. 
	This concentration drastically reduces the contrast of the plot.
	That is why the time axis starts at $t=50$ in Figure~\ref{sub:heatmap-f05-qpm5-sup13}.
	This figure shows that a threshold $\theta$ around one hundred is a good choice:
	it seems to maximises the time for which the threshold estimation holds true.
	With this threshold, the numerical errors $p_e^0(t,\theta)$ and $p_e^1(t,\theta)$
	does not exceed $10^{-4}$, see Figure~\ref{sub:erreur-f05-qpm5-time20}, before time $15$.
	It is coherent with $\hat{t}$ plotted in Figure~\ref{sub:t_chapeau-f05-qpm5}. 
	Indeed, the time $\hat{t}$ is equal to $12$ for errors on the order of $10^{-4}$, see
	Figure~\ref{sub:t_chapeau-f05-qpm5}. 
	Moreover, in Remark~\ref{rem:main-result1}, the result $\hat{t}$ is a maximum between two times. 
	The second one does not depend on the error $\delta$
	(it is called $t_c$ in the proof of Theorem~\ref{theo:main-result},
	see equation~\eqref{eq:dynamique_m_t1}).
	This explains the plateau starting at an error just before $10^{-3}$ in
	Figure~\ref{sub:t_chapeau-f05-qpm5}.
	Indeed, for this set of parameters and $\delta$ large enough,
	the time $\hat{t}$ is equal to $t_c$.
	Finally, in Figure~\ref{sub:erreur-f05-qpm5-time100}, we note that $p_e^0$
	is above $p_e^1$ for small values of $t$.
	Then, around time $t=70$, $p_e^1$ increases quickly until a value close to one
	whereas $p_e^0$ stays below $10^{-2}$.
	This is because the majority of the mass of the distribution of $h_{t,K}^0$
	stays less than $\theta$.
	On the other hand, most of the mass of the distribution of $h_{t,K}^1$ crosses
	$\theta$ around time \(t=70\).
	So, the error \(p_e^1\) becomes large.
	We present the histograms of the distributions of the synaptic currents at certain times.
	\newpage
	\begin{figure}[h]
		\centering 
		\subfloat[]{\includegraphics[width=0.45\textwidth]{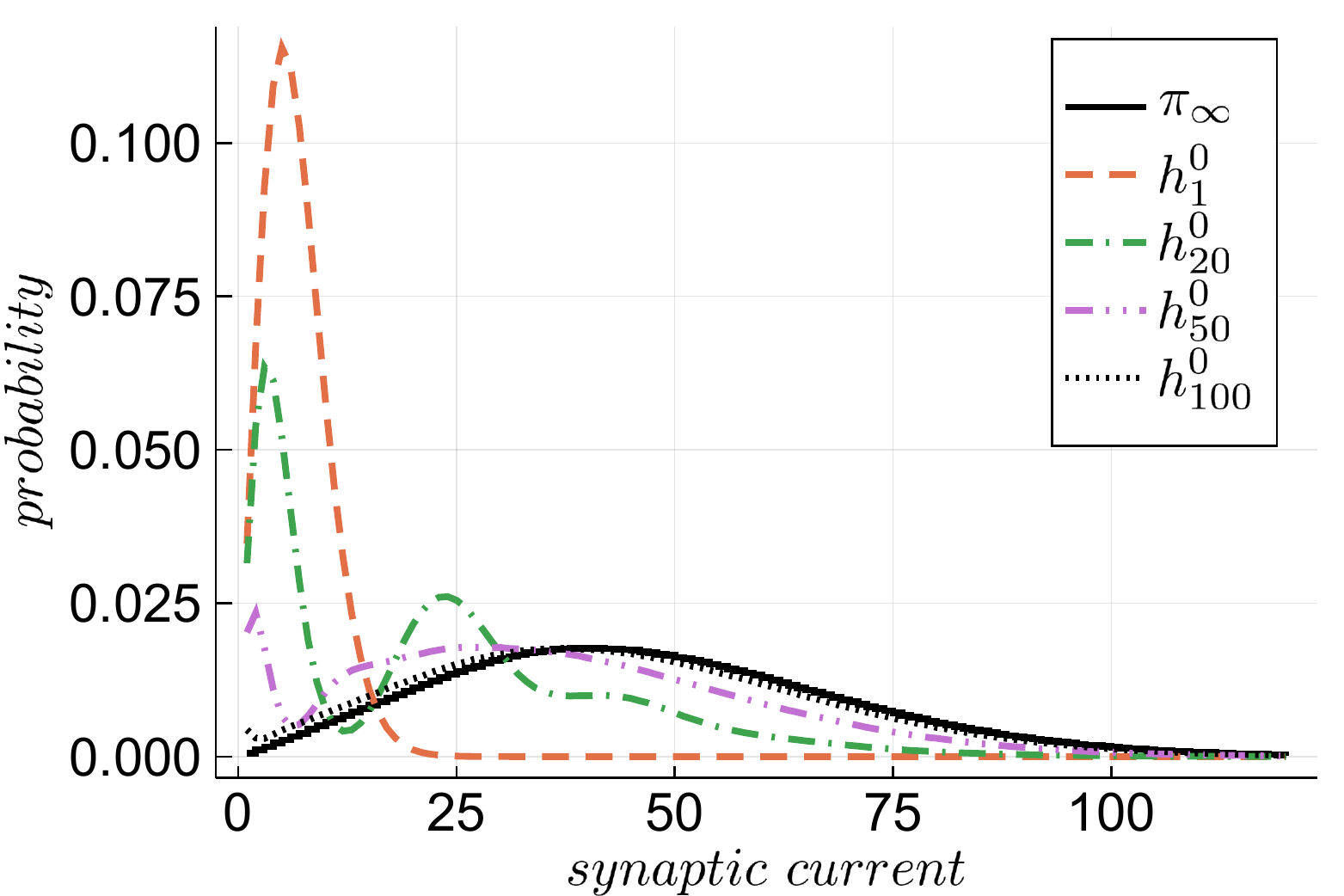} \label{sub:histogrammes-f05-qpm5-h0} }
		\subfloat[]{ \includegraphics[width=0.45\textwidth]{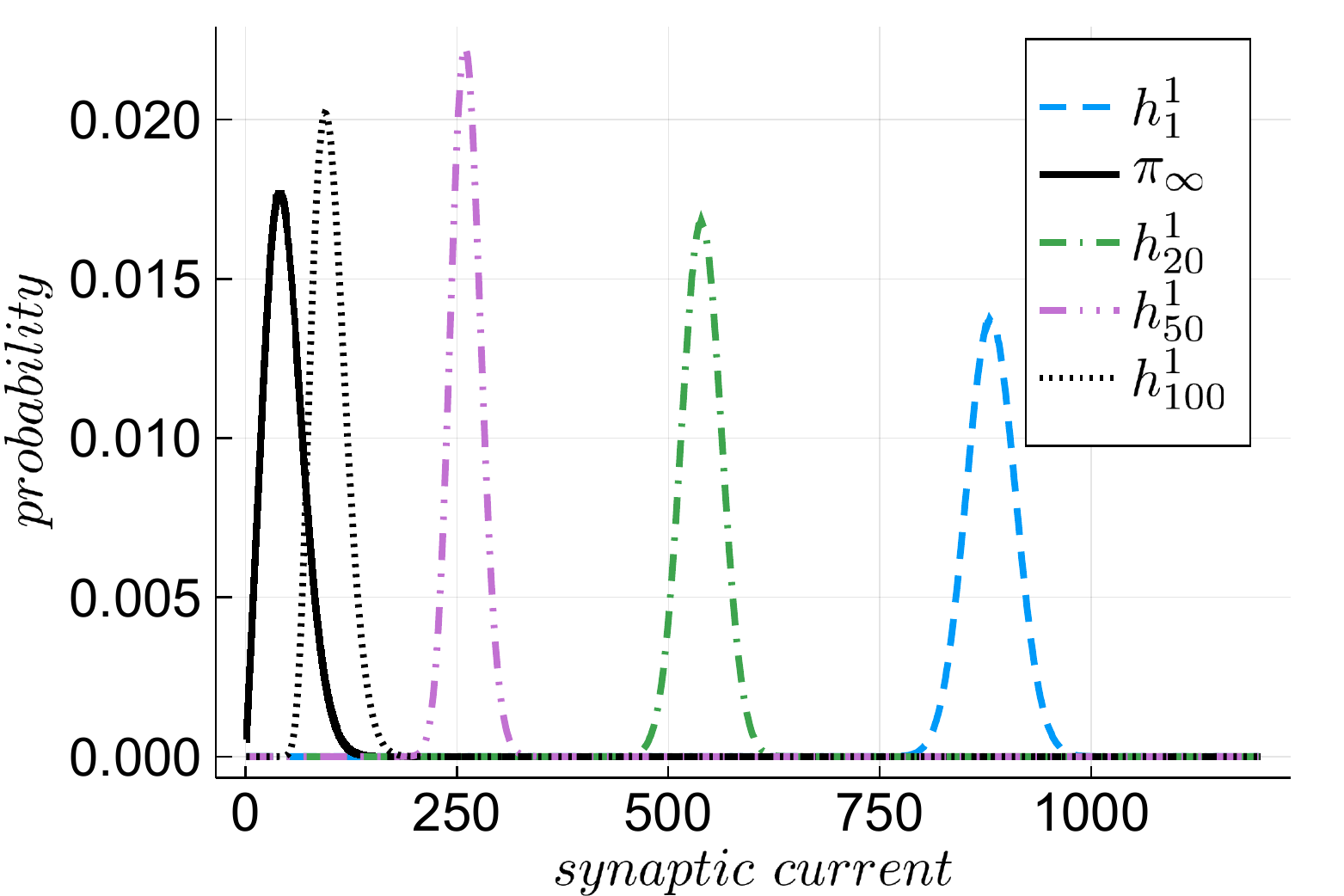} \label{sub:histogrammes-f05-qpm5-h1} }
		\\
		\subfloat[]{\includegraphics[width=0.45\textwidth]{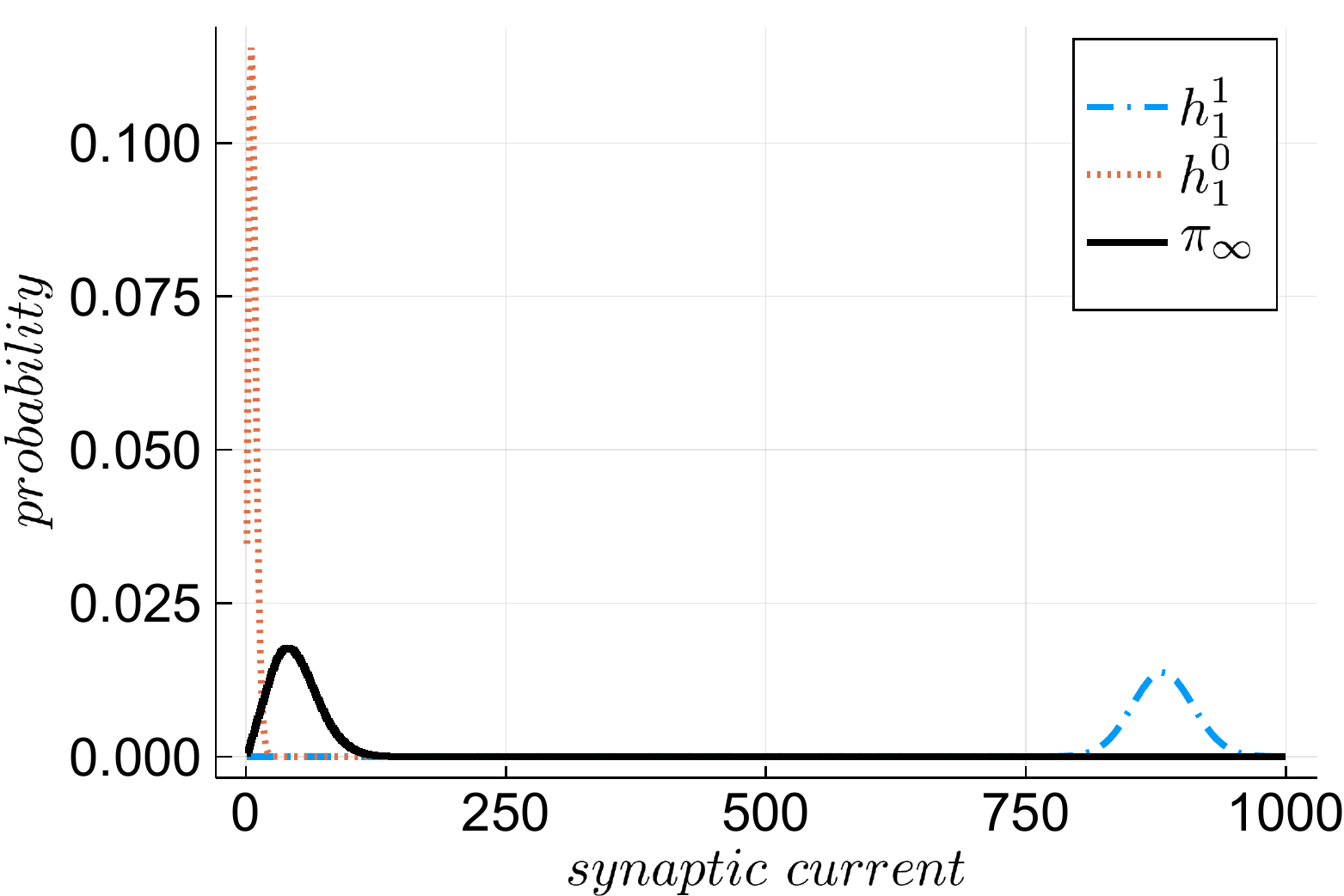} \label{sub:histogrammes-f05-qpm5-t_init} }
		\subfloat[]{ \includegraphics[width=0.45\textwidth]{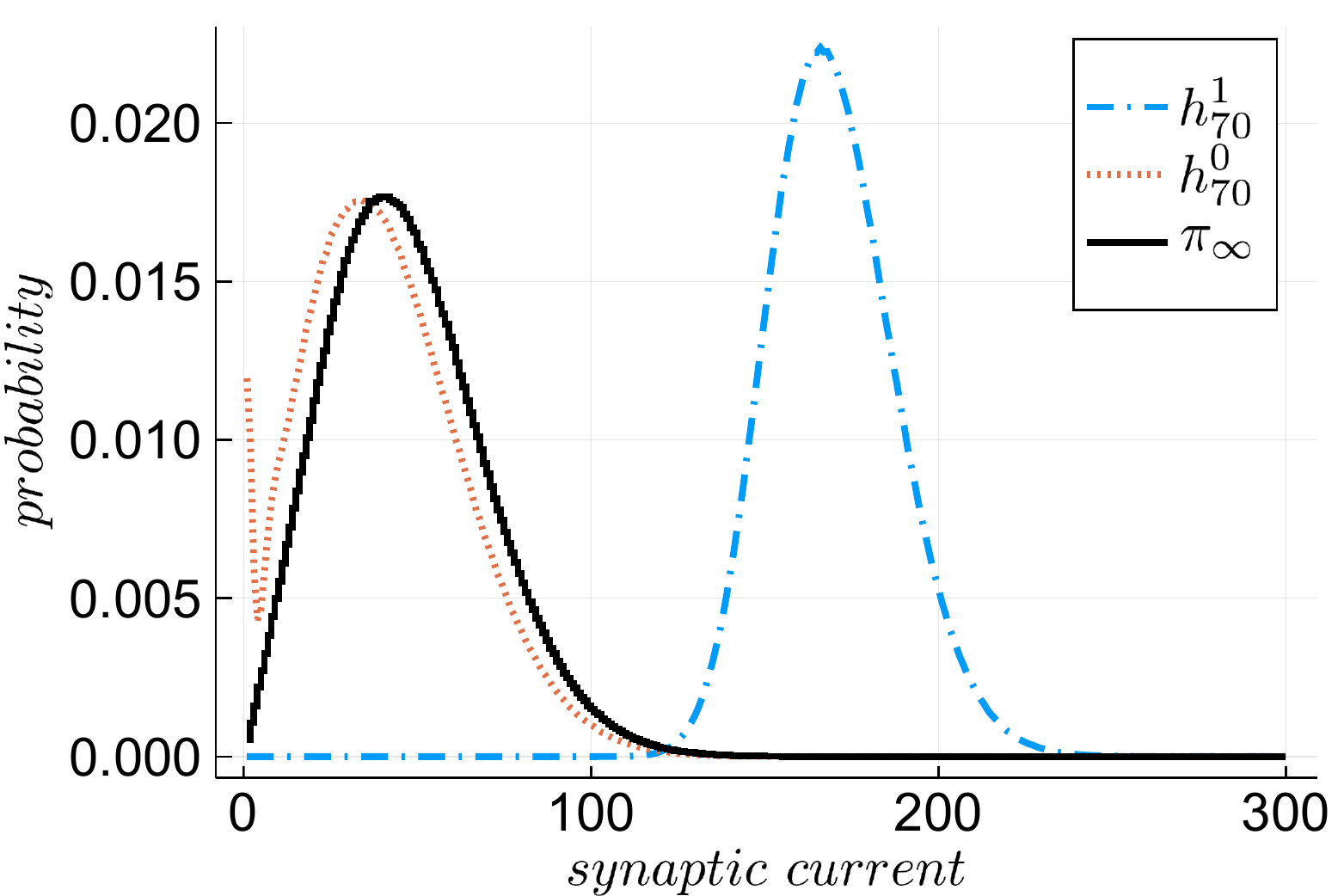} \label{sub:histogrammes-f05-qpm5-t-50-80} }
		\caption{
			\eqref{sub:histogrammes-f05-qpm5-h0}
			Histograms of the distributions of $h_t^0$ at different times.
			\eqref{sub:histogrammes-f05-qpm5-h1}
			Histograms of the distributions of $h_t^1$ at different times.
			\eqref{sub:histogrammes-f05-qpm5-t_init}
			Distributions of $h_1^y$ just after the learning phase and the invariant measure.
			\eqref{sub:histogrammes-f05-qpm5-t-50-80}
			Distributions of $h_t^y$ at $t=70$ and the invariant measure.
			Parameters: $N = 20\ 000,\ f_N = 0.05,\ 
			q^+ = q^-_{01}= 0.5,\ q^-_{10}= 0.05,\ r=3.$
		}\label{image.histo-result}
	\end{figure}
	We note again that the invariant measure is concentrated around small values.
	This enables the post learning distribution of $h_1^0$ to have a small variance,
	see Figures~\ref{sub:histogrammes-f05-qpm5-h0} 
	and~\ref{sub:histogrammes-f05-qpm5-t_init}. 
	However, the variance of this distribution increases quickly.
	In particular, the distribution of $h_t^0$ has a multimodal shape with a
	high proportion of the mass staying near $0$ for more than $50$ presentations after learning. 
	On the other hand, the distribution of $h_t^1$ keeps a unimodal shape with a variance decreasing
	at the beginning, then increasing before decreasing again, see Figure~\ref{sub:histogrammes-f05-qpm5-h1}.
	Distributions stay well separated approximately until time $t=70$, see
	Figure~\ref{sub:histogrammes-f05-qpm5-t-50-80}.
	
	In order to illustrate the role played by the parameter $r$, we plot the distributions just after the
	learning phase for different values of $r$.
	\newpage
	\begin{figure}[h] 
		\centering 
		\subfloat[]{
			\includegraphics[width=0.45\textwidth]{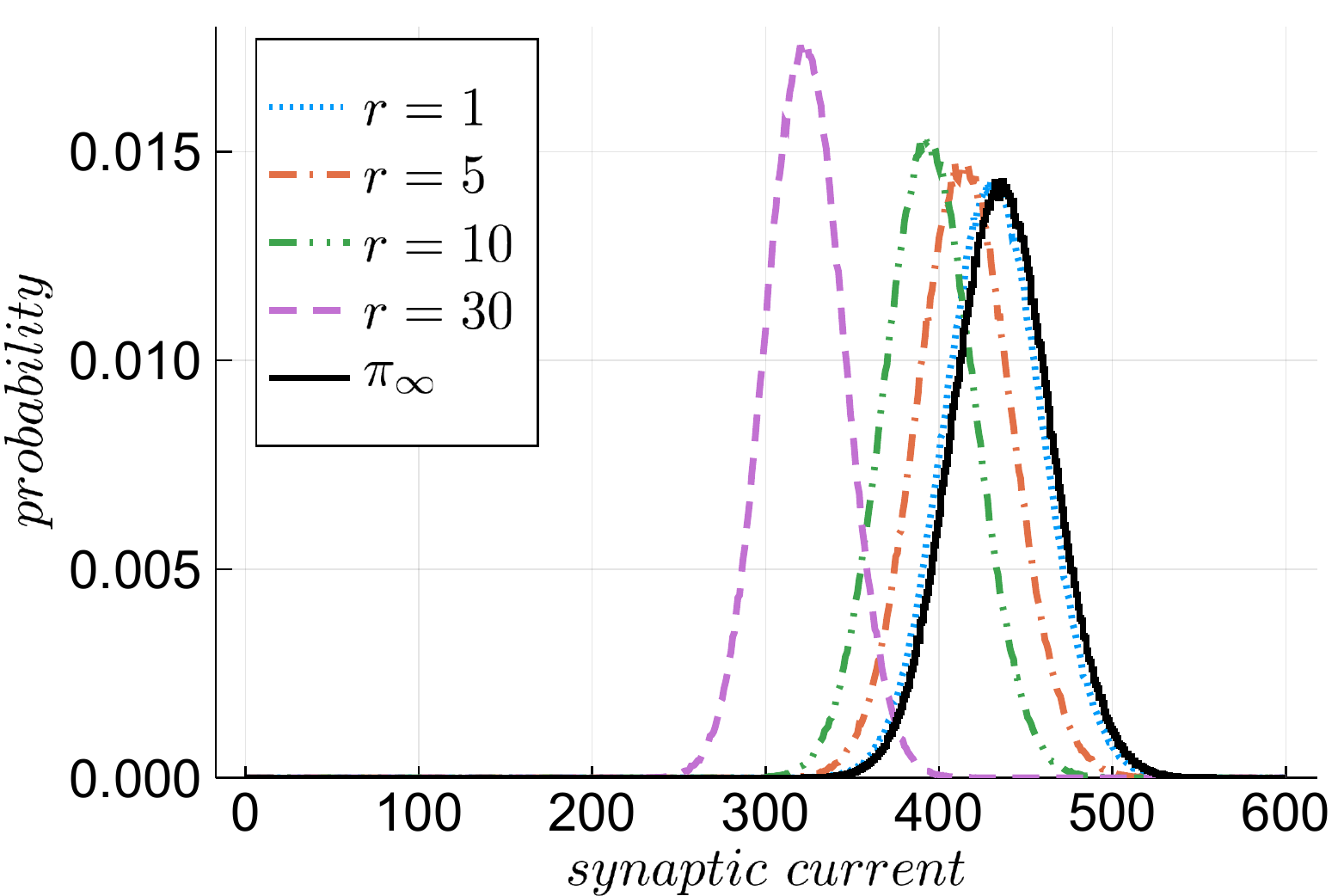} \label{sub:oubli-apprentissage-h0} }
		\subfloat[]{ \includegraphics[width=0.45\textwidth]{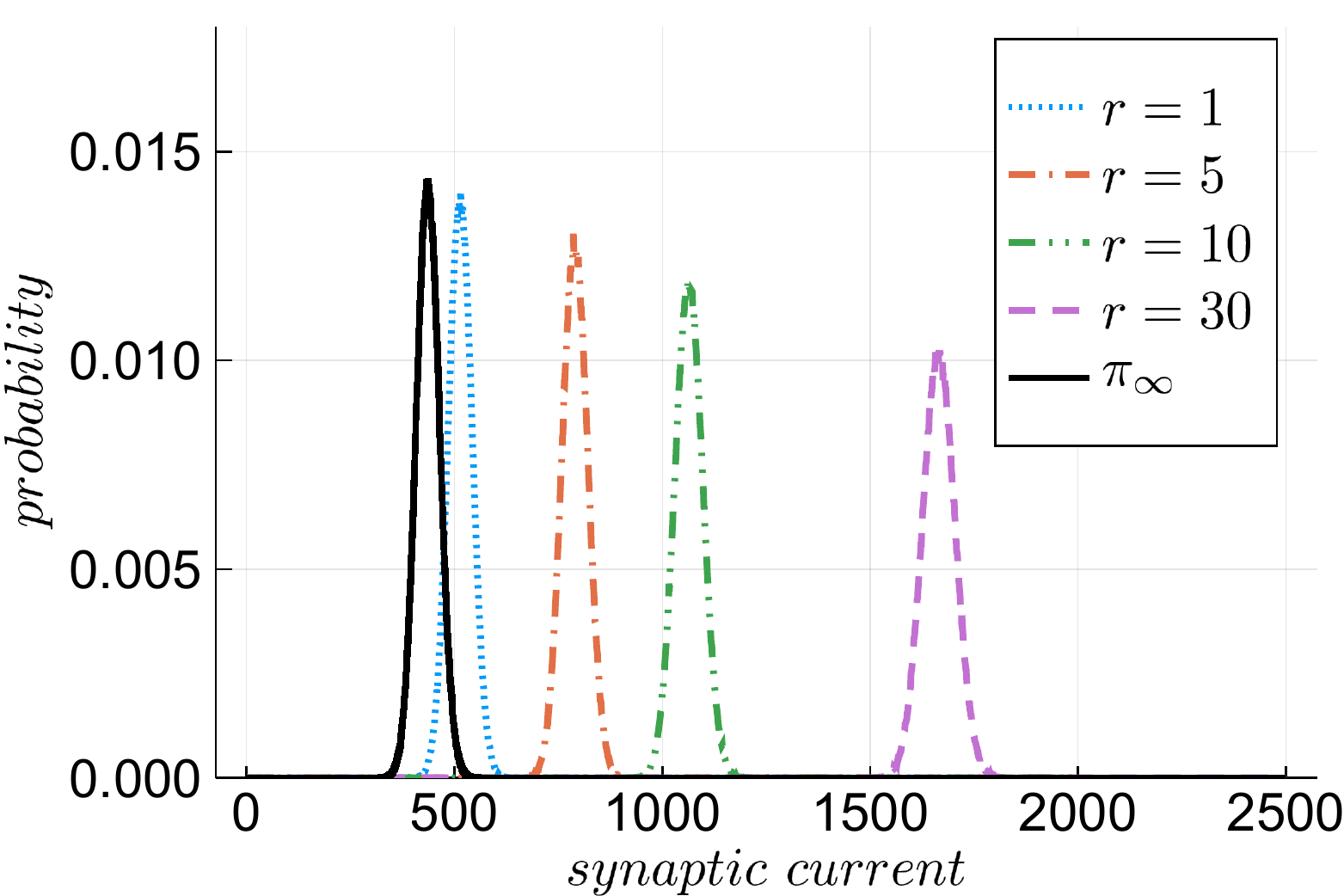} \label{sub:oubli-apprentissage} }
		\\ 
		\subfloat[]{ \includegraphics[width=0.45\textwidth]{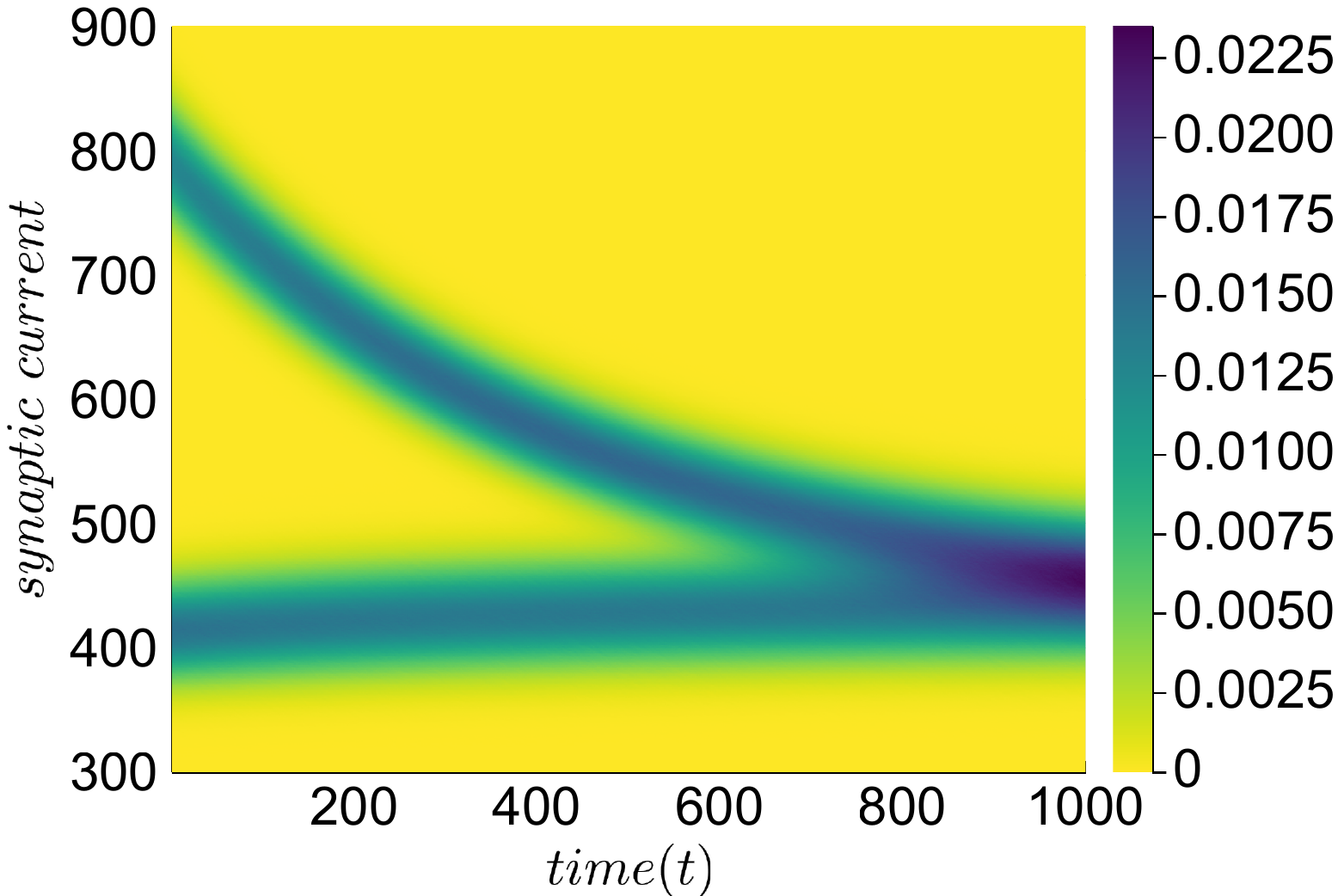}
			\label{sub:oubli-apprentissage-heatmap-r5} } 
		\subfloat[]{
			\includegraphics[width=0.45\textwidth]{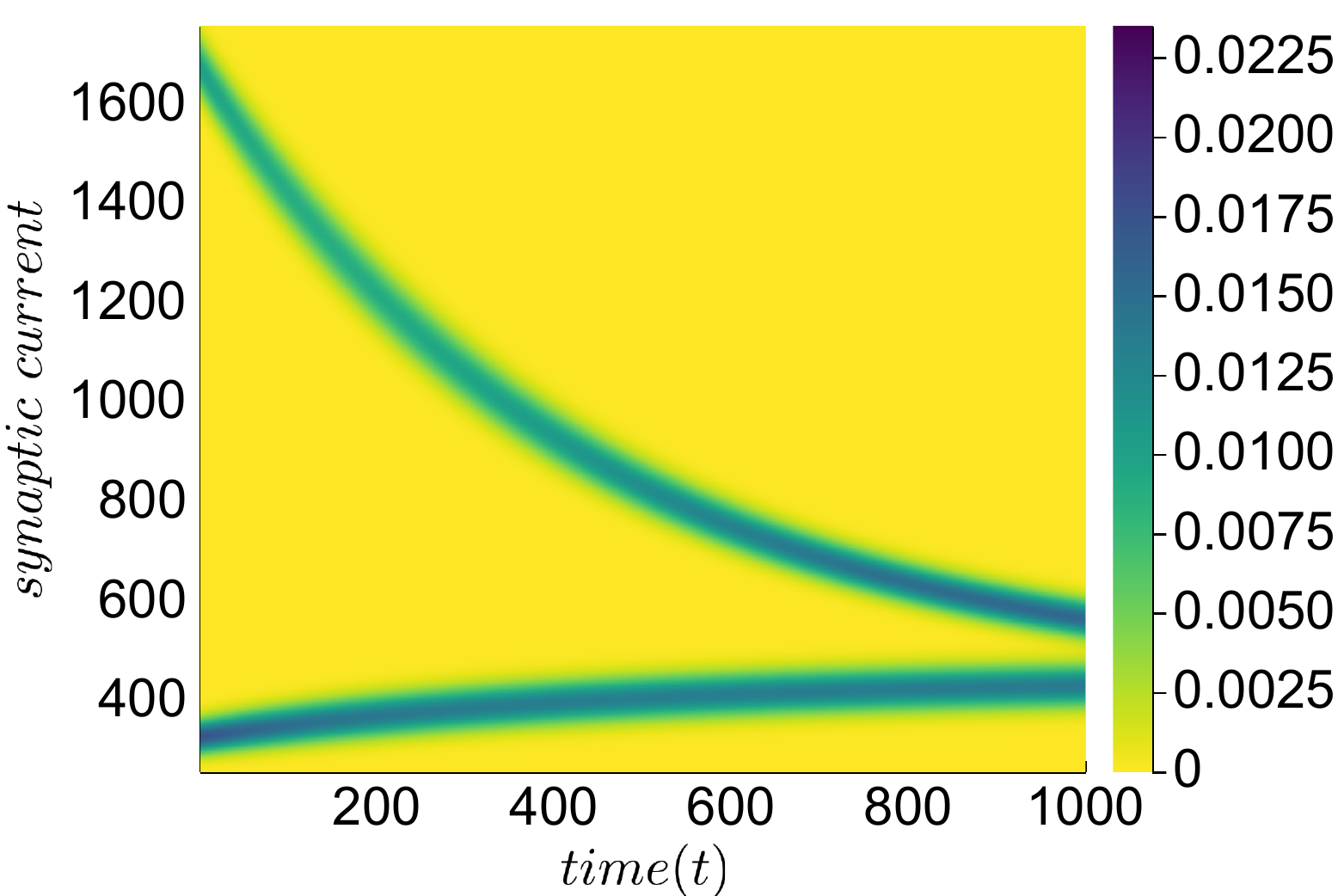} \label{sub:oubli-apprentissage-heatmap-r30} }
		\caption{
			\eqref{sub:oubli-apprentissage-h0}
			Distributions of	$h_1^0$, just after learning, 
			for different values of $r$ and the invariant measure.
			\eqref{sub:oubli-apprentissage}
			Distributions of $h_1^1$, for different values of $r$ and the invariant measure.
			\eqref{sub:oubli-apprentissage-heatmap-r5}
			The sum of the distributions of $h_t^0$ and $h_t^1$ for $r=5$.
			\eqref{sub:oubli-apprentissage-heatmap-r30}
			The sum of the distributions of $h_t^0$ and $h_t^1$ for $r=30$.
			The colour bar gives the probability values.
			Parameters: $N_{MC} =10^6,\ N = 20\ 000,\ f_N = 0.1,\ 
			q^-_{01} = q^-_{10} = 0.01 \text{ and } q^+= 0.05.$
		}\label{image.oubli-apprentissage} 
	\end{figure}
	Because of the parameters choice, the distributions of $h_1^0$ are close to the invariant measure
	$\pi_{\infty}$ whereas the distributions of $h_1^1$ are further from it,
	see Figures~\ref{sub:oubli-apprentissage-h0} and ~\ref{sub:oubli-apprentissage}.
	Moreover, the forgetting is really slow.
	However, if we want the signal to be learnt correctly with such a small $q^+$,
	then $r$ has to be high enough.
	This shows the need of a large $r$ in view of a	slow forgetting.
	Figures~\ref{sub:oubli-apprentissage-heatmap-r5} and~\ref{sub:oubli-apprentissage-heatmap-r30}
	show well the difference brought by a higher value of $r$:
	the separation between the two distributions is clearer.
	
	\section{Discussion}\label{sec:discussion}
	We provide a mathematical framework to study the memory retention of random signals by a
	recurrent neural network with binary neurons and binary synapses.
	We thus consider a paradigm linking synaptic plasticity and memory:
	a stimulus is remembered as long as its trace in the synaptic weights is strong enough.
	In order to measure the memory of a stimulus, we study the synaptic current onto one neuron
	during the presentation of this stimulus.
	First, we compute the spectrum of the transition matrix of the Markov chain associated to
	the synaptic current.
	This enables us to conclude that the eigenvalues are strictly different whatever the parameters are.
	In particular, we can compute the rate of convergence of the chain to its invariant measure,
	see Corollary~\ref{cor:conv-inv-binmix}.
	Then, we carry on the work done by~\cite{amit_precise_2010} on the dynamics of the
	distributions of the synaptic current and their invariant distribution.
	This leads us to control the form of these distributions.
	Their properties give enough information to find a lower bound on the time a neuron keeps a
	good estimate on its response to the first stimulus and hence remembers it. 
	We measure the quality of this estimation by performing a statistical test based on the
	observation of the synaptic current onto one neuron. 
	We define an error associated to this test which depends on two distributions:
	the distribution of the synaptic current knowing that the neuron was selective to
	the initial signal and the distribution knowing that the neuron was not selective.
	Finally, unlike previous studies, we take into account the possibility that heterosynaptic
	and homosynaptic depressions scale differently in the network size $N$
	and we consider the role of presenting several times a signal in the learning phase.
	
	We use the model presented by~\cite{amit_learning_1994} because of its relative simplicity and
	its consideration of synapse correlations.
	Their study focused on the first two moments of the synaptic current.
	It leads to a result on the memory capacity of the network which depends on a global variable,
	the so-called signal-to-noise ratio (SNR).
	In particular, they studied the SNR in the large $N$ asymptotic.
	They obtained a large SNR when the coding level $f_N$ is low and the depression probabilities are
	proportional to $f_N$: $q_{01,N}^- \propto q^+ f_N$ and $q_{10,N}^- \propto q^+ f_N$.
	The lowest coding level possible $f_N$ is on the order of $\frac{\log(N)}{N}$ and it gives a memory
	capacity on the order of $\frac{-1}{\log(\lambda_1)}\sim_{N\infty}\frac{1}{f_N^2}$.
	In~\cite{romani_optimizing_2008,amit_precise_2010}, they assumed that $q_{10,N}^- = 0$
	and showed the same result using a Gaussian approximation of the synaptic currents.
	Under the same assumption as in~\cite{amit_learning_1994}
	$\left(q_{01,N}^-,\ q_{10,N}^-\propto q^+ f_N \text{ and } f_N \rightarrow 0\right)$,
	our result also predicts a forgetting time on the order of 
	$\frac{-1}{\log(\Lambda_0)}\sim_{N\infty}\frac{1}{f_N^2}$, see Theorem~\ref{theo:main_result_q_N}.
	Moreover, we give a result for depression probabilities not depending on $N$ and our result link
	the probability of error to the parameters. 
	Note the presence of $\Lambda_0$ in our result rather than $\lambda_1$ in previous studies.
	This difference comes from our different measure of memory lifetime.
	The SNR analysis is based on the convergence of the means of the synaptic currents
	whereas our retrieval criterion requests the knowledge of their entire distributions.
	Indeed, we search for a memory lifetime obtained with a control on the errors \(p_e^0\) and \(p_e^1\).
	We conjecture that we could prove similar result as ours with $\lambda_1$ rather than
	$\Lambda_0$.
	Finally, our results do not necessarily need the large $N$ asymptotic.
	Nevertheless, in this asymptotic, the expression of $\hat{t}$ simplifies, see
	Remark~\ref{rem:main-result1}.
	
	In this study, we assume that learning is generated by the divergence of the distributions
	of the synaptic currents $h_{t}^0$ and $h_{t}^1$ from their invariant distribution,
	see Figure~\ref{fig:image-discussion}.
	\begin{figure}[h] 
		\centering 
		\includegraphics[width=0.9\textwidth]{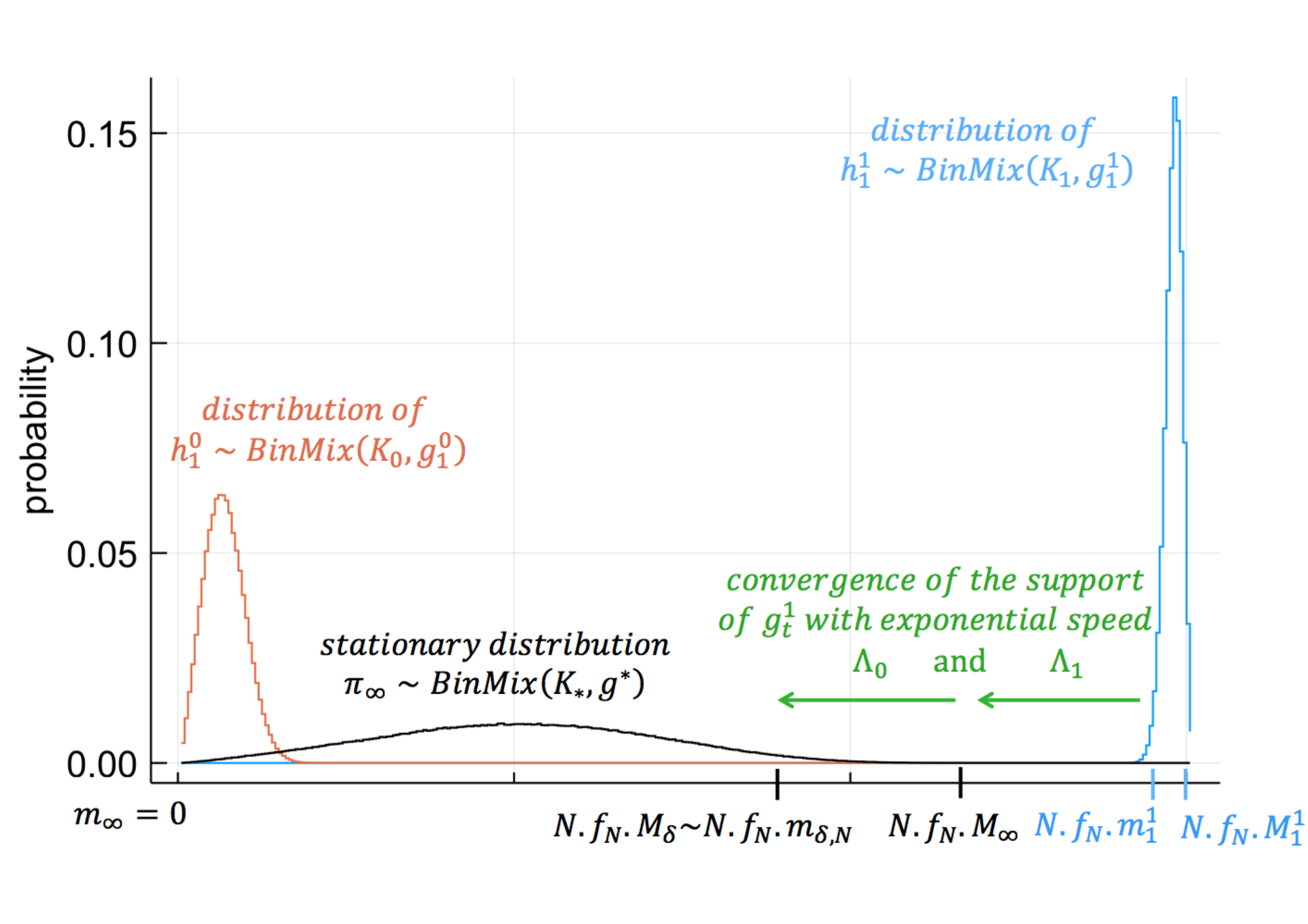}
		\caption{
			Illustration of the notations. The variables $K_0,\ K_1,\ K_*$ have Binomial laws with
			parameters \(N\) and \(f_N\).
			They are respectively independent from $h_1^0,\ h_1^1,\ \pi_\infty$.
		}\label{fig:image-discussion} 
	\end{figure}
	\noindent The main role of the number of signal presentations ($r$)
	is to separate these two distributions.
	Indeed, the larger the $r$, the more separated the support of the mixing distributions
	$g_1^0$ and $g_1^1$ are.
	In our proofs, we compare $g_t^1$ to $g^*$ after showing that as long as $g_t^1$ is far enough
	from $g^*$ it is far enough from $g_t^0$, see Lemma~\ref{lem:mass_move_right}.
	As a consequence, the expression of $\hat{t}$ is an increasing function of $m_1^1-M_\infty$,
	and so of $r$.
	
	Let us now discuss the roles of the coding level, the potentiation and depression probabilities.
	They affect both learning and forgetting.
	The coding level directly affects the number of synapses candidate to depression and potentiation.
	Indeed, looking at an individual synapse, its probability to potentiate is $f_N^2q^+$ and
	its probability to depress is $f_N(1-f_N)(q_{10}^- + q_{01}^-)$.
	Thus, when the coding level is close to one, the fluctuations are important
	and seem to cause a fast forgetting as shown in the illustrations of Section~\ref{sec-2}.
	Therefore, we used a low coding level, see Assumption~\ref{ass-main:on-f}.
	This choice slows down the forgetting.
	However, $f_N$ cannot be too small because it is detrimental to the learning phase
	as the distance between the two conditional distributions depends on $f_N$.
	More particularly, it depends on $Nf_N$ which then need to be large enough,
	see Assumption~\ref{ass-main:on-f}.
	The last parameters we can tune are the potentiation and depression probabilities.
	As for $f_N$, there is a compromise between their role in learning and forgetting.
	Indeed, in order to promote learning, they need to be close enough to one
	but on the contrary, small probabilities reduce the forgetting rate.
	So we propose to take a potentiation probability ($q^+$) on the order of $1$,
	to learn quickly, and small depression probabilities, to forget slowly.
	Potentiation increases the synaptic currents so it leads to a shift of the distribution of $h_t^1$
	to the right and for the same reasons, depression results in a shift of the distribution of
	$h_t^0$ to the left.
	Therefore, smaller depression than potentiation implies that the distribution of $h_t^1$ is
	significantly shifted to the right whereas the distribution of $h_t^0$ is slightly shifted to the left.
	In view of learning, the initial separation between distributions can be limited if the invariant
	distribution $\pi_{\infty}$ is already concentrated on high values of synaptic currents. 
	As there are two depression probabilities, this situation can be avoided by choosing
	one probability big enough and the other one smaller.
	For example, when depression probabilities depend on $N$ under Assumption~\ref{ass-main:on-q10-q01},
	both $q_{10,N}^-$ and $q_{01,N}^-$ converge to $0$.
	If they both converge too fast ($a_N$ and $b_N$ converge to $0$),
	the invariant measure is concentrated around one and no learning is possible.
	However, if either $a_N$ or $b_N$ does not converge to $0$, then the invariant measure is 
	not concentrated around $0$ and learning is possible. Then, depending on the different
	large $N$ asymptotic of $a_N$ and $b_N$, we computed the different memory lifetime summarized
	in Table~\ref{fig:table_asympt}. The best memory lifetimes are on the order of $\frac{1}{f_N^2}$
	and are obtained when $a_N$ (resp. $b_N$)
	converges to $0$ and $b_N$ (resp. $a_N$) converges to a constant in $\reels^+$ (resp. $\reels_*^+$)
	or $(a_N,b_N)$ converges to constants in $\reels_*^+ \times \reels^+$.
	Thus, if one wants to increase the memory lifetime beyond this order,
	we seem to need a model more complex. 
	
	Our study is valid for a classic learning, which needs multiple stimulus presentations,
	but also for a one shot learning.
	This last one is possible only with a specific choice of parameters.
	Indeed, when presenting a stimulus, the synaptic weights between selective neurons
	need to be potentiated with a high probability (high $q^+$).
	When presenting other stimuli, these same weights need to have a very small probability of
	undergoing depression (low $q^-_{01}$ and $q^-_{10}$ ).
	As a result, following the presentation of a stimulus, selective neurons develop strong
	links and then these connections take time to disappear.
	Thus, the experiment associated with this model would focus on recognition memory.
	A well-known experiment in this field was carried out by \cite{standing_learning_1973}.
	He showed that humans are able to recognize up to 10,000 images,
	presented only once, with 90 percent success rate.
	
	Many perspectives can be studied as a follow-up to this study.  
	First, the analysis carried out on the synaptic current onto one neuron could
	be extended to the entire vector of synaptic currents.
	The correlations between synaptic weights would then play a major role.
	In addition, the model could be completed in order to get closer to biology.
	Indeed, the formation of synaptic memory is far more complex than in our model.
	In particular, the link between the dynamics of the neurons and the synaptic weight is missing. Improving the model in this direction could be done by considering more structured and complex
	external signals, adding neural layers and a more realistic membrane potential neural dynamics.
	In the literature, adding synaptic states does not seem to be successful as the authors stated
	in~\cite{fusi_limits_2007,huang_capacity_2011}, whereas meta-plastic transitions brought better SNR
	results~\cite{fusi_cascade_2005,roxin_efficient_2013,benna_computational_2016}.
	Adding neural dynamics in such models would be a next challenging step.
	Nevertheless, the model analysed here illustrates well the trade-off between the
	plastic and the stable characteristics of memory.
	Indeed, learning implies changes of synaptic weights (plasticity) as well as mechanisms
	which maintain them (stability).
	In mathematical terms, stability is related to the minimal convergence rate and
	plasticity refers to the sensibility to disturbance.
	We see that there is a compromise: the more a dynamics is sensitive to
	disturbances, the less it is stable and vice-versa.
	
	\section*{Appendix} \appendix \section{Proofs}\label{app:proofs}
	\subsection{Proof of Proposition~\ref{prop:dyn-h}}\label{app:prop-dyn-h}
	\begin{nota}\label{not:G-ab}
		Let $Z$ be a random variable in $[0,1]$ with distribution $g_Z$ and
		cumulative distribution function $G_Z$.
		We denote by $g_{Z,(a,b)}\in \mathcal P([0,1])$ the distribution such that 
		\[
		\forall u \in \reels,\qquad
		G_{Z,(a,b)}(u) =
		G_Z\left(\frac{u-b}{a-b}\right).
		\]
	\end{nota} 
	Proposition~\ref{prop:dyn-h} relies on the following
	\begin{lem}\label{lem-still-BinMix} 
		Let $Z$ be a mixture of Binomial $Z =  \BinMix(K,Y_Z)$. Let $0 \leq b < a < 1$.
		Conditionally on \(Z\), consider two independent Binomial distributions \(\Bin(Z,a)\)
		and \(\Bin(K-Z,b)\) and define \(X = \Bin(Z,a) + \Bin(K-Z,b)\).
		Then
		\begin{equation}\label{eq:20191007_1}
		X\overset{\mathcal{L}}{=} \BinMix\left(K,Y_X\right) \ \text{  with  }\ \ Y_X = (a-b)Y_Z+b.
		\end{equation}
		In particular, $G_X(u) = G_{Z,(a,b)}(u).$
	\end{lem}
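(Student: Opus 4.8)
The plan is to reduce the statement to an elementary fact about ordinary Binomial laws by conditioning on the mixing variable. By Remark~\ref{rem-equivalence-BinMix-Bin} I may realize $Z$ as $Z \mid Y_Z \overset{\mathcal{L}}{=} \Bin(K,Y_Z)$, where $Y_Z$ has distribution $g_{Y_Z}$ and is independent of the Binomial randomness. Fixing $Y_Z = u \in [0,1]$, it then suffices to identify the conditional law of $X$ given $\{Y_Z = u\}$ as $\Bin(K, b + (a-b)u)$; once this is done, the conclusion $X \overset{\mathcal{L}}{=} \BinMix(K,Y_X)$ with $Y_X = b + (a-b)Y_Z$ follows from the same equivalence, noting that $Y_X$ takes values in $[b,a]\subset[0,1)$ because $0 \le b < a < 1$, so that $\BinMix(K,Y_X)$ is well defined.

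To identify the conditional law I would compute the probability generating function. Given $\{Y_Z = u\}$ we have $Z \sim \Bin(K,u)$, and conditionally on $Z$ the two Binomials are independent, so for $s \in [0,1]$,
\[
\esp\left[s^X \mid Y_Z = u\right] = \esp\left[(1-a+as)^{Z}(1-b+bs)^{K-Z} \,\middle|\, Y_Z = u\right] = \bigl(u(1-a+as) + (1-u)(1-b+bs)\bigr)^{K},
\]
using $\esp[\alpha^Z\beta^{K-Z}\mid Y_Z=u] = (u\alpha+(1-u)\beta)^K$ for the Binomial $Z$. Rearranging the bracket gives $1 - p_u + p_u s$ with $p_u = ua + (1-u)b = b + (a-b)u$, which is exactly the generating function of $\Bin(K,p_u)$. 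Equivalently, one may argue directly by a thinning/colouring picture: each of the $K$ underlying trials is retained in $X$ independently with probability $ua + (1-u)b$, the factors $u$ or $1-u$ recording whether the trial contributed to $Z$ and the factors $a$ or $b$ recording whether the corresponding thinning kept it.

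Finally, the cumulative distribution function statement is immediate: since $a-b>0$,
\[
G_X(v) = \proba(Y_X \le v) = \proba\!\left(Y_Z \le \tfrac{v-b}{a-b}\right) = G_Z\!\left(\tfrac{v-b}{a-b}\right) = G_{Z,(a,b)}(v)
\]
by Notation~\ref{not:G-ab}. I do not expect a genuine obstacle here; the only point needing a little care is the passage from ``conditionally independent given $Z$'' to the per-trial thinning description — which the generating-function computation sidesteps entirely — together with the bookkeeping ensuring $Y_X\in[0,1]$.
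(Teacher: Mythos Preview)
Your argument is correct. Conditioning on $Y_Z=u$ and computing the probability generating function of $X$ is a clean and fully rigorous route: the tower property with $\esp[s^X\mid Z,Y_Z]=(1-a+as)^{Z}(1-b+bs)^{K-Z}$ followed by $\esp[\alpha^{Z}\beta^{K-Z}\mid Y_Z=u]=(u\alpha+(1-u)\beta)^K$ gives exactly $(1-p_u+p_us)^K$ with $p_u=b+(a-b)u$, and the identification of $G_X$ is immediate.

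The paper, however, does not go through generating functions. It builds an explicit coupling: it represents $Z=\sum_{i=1}^K\mathbbm{1}_{\{U_i\le G_Z^{-1}(\tilde U)\}}$ with i.i.d.\ uniforms $U_i,\xi_i,\eta_i,\tilde U$, writes $X$ as a sum of indicators of the event $\{U_i\le G_Z^{-1}(\tilde U),\ \xi_i\le a\}\cup\{U_i>G_Z^{-1}(\tilde U),\ \eta_i\le b\}$, and then reads off the success probability as the area of the corresponding region, namely $b+(a-b)G_Z^{-1}(\tilde U)$. This yields directly $X=\sum_i\mathbbm{1}_{\{W_i\le b+(a-b)Y_Z\}}$ for fresh uniforms $W_i$. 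What the paper's approach buys is an explicit pathwise realisation of $X$ as $\BinMix(K,Y_X)$ on the same probability space; what your approach buys is brevity and a purely distributional computation that avoids the bookkeeping of the coupling. Both are perfectly valid here, and your version is arguably the more economical proof of the stated distributional identity.
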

	\begin{proof}
		Let $\tilde{U}$, $(U_i)_{1\leq i \leq K}$, $(\xi_i)_{1\leq i \leq K}$,
		$(\eta_i)_{1\leq i \leq K}$ and $(W_i)_{1\leq i \leq K}$ be
		\textit{i.i.d.} random variables following the uniform law on $[0,1]$. 
		By the first point of Remark~\ref{rem-equivalence-BinMix-Bin}, $Z$ is the sum of
		$(Z_i)_{1\leq i \leq K}$ \textit{i.i.d.} Bernoulli of parameter $Y_Z=G_Z^{-1}(\tilde{U})$.
		Thus, we obtain that conditionally on $Z$,
		\[
		X = \underbrace{\sum_{i=1}^{K}Z_i\mathbbm{1}_{\left\{ \xi_i\leq
				a\right\}}}_{\overset{\mathcal{L}}{=}  \Bin(Z,a)} +
		\underbrace{\sum_{i=1}^{K}(1-Z_i)\mathbbm{1}_{\left\{ \eta_i\leq
				b\right\}}}_{\overset{\mathcal{L}}{=}   \Bin(K-Z,b)}
		\]
		where the Binomials are independent.
		Then, let consider 
		\(\forall i, Z_i= \mathbbm{1}_{\left\{U_i\leq G_Z^{-1}(\tilde{U})\right\}}\).
		Thus,
		\begin{align}
		\nonumber X &= \sum_{i=1}^{K}\mathbbm{1}_{\left\{U_i\leq
			G_Z^{-1}(\tilde{U})\right\}}\mathbbm{1}_{\left\{ \xi_i\leq
			a\right\}}+\sum_{i=1}^{K}\mathbbm{1}_{\left\{U_i>
			G_Z^{-1}(\tilde{U})\right\}}\mathbbm{1}_{\left\{ \eta_i\leq b\right\}}.\\
		\text{	So }\quad
		\label{eq:couple-unif}
		X & = \sum_{i=1}^{K}\mathbbm{1}_{\left\{U_i\leq G_Z^{-1}(\tilde{U}), \xi_i\leq
			a\right\} \bigcup \left\{U_i> G_Z^{-1}(\tilde{U}), \eta_i\leq b\right\}}.
		\end{align}
		\begin{figure}[h]
			\centering
			\begin{tikzpicture}
			\fill[color=gray!20] (1,0) -- (1,1.5) -- (4,1.5)  -- (4,0) -- cycle;
			\fill[color=gray!100] (0,0) -- (1.5,0) -- (1.5,3)  -- (0,3) -- cycle;
			\draw[->] (0,0) -- (4.5,0);
			\draw (4.5,0) node[right] {$U_i$};
			\draw [->] (0,0) -- (0,4.5);
			\draw (0,4.5) node[above] {$\xi_i, \eta_i$};
			\draw (0,1.5) node[left] {$b$};
			\draw (0,3) node[left] {$a$};
			\draw (1.5,0) node[below] {$G_Z^{-1}(\tilde{U})$};
			\draw (4,0) node[below] {$1$};
			\draw (0,4) node[left] {$1$};
			\draw (-0.1,0) node[below] {$0$};
			\draw [dashed] (0,4) -- (4,4);
			\draw [dashed] (4,0) -- (4,4);
			\end{tikzpicture}
			\caption{In gray, the domain to which the couple $\left(U_i,\xi_i,\eta_i\right)$
				needs to belong to from the equation~\eqref{eq:couple-unif}.
			}\label{fig:couple-unif}
		\end{figure}
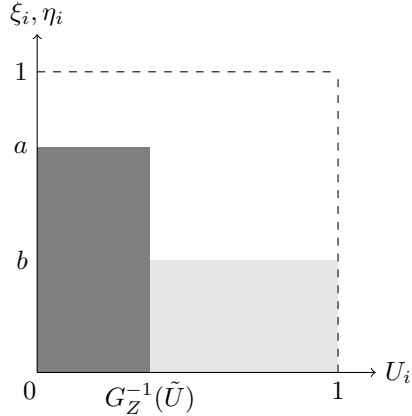
		
		For all Borel set $\mathcal{D}\subset [0,1]^3$,
		$\proba\left(\left(U_i,\xi_i, \eta_i\right)\in \mathcal{D}\right) =
		\mathcal{V}\left(\mathcal{D}\right)$ where $\mathcal{V}\left(\mathcal{D}\right)$
		is the volume of $\mathcal{D}$.
		Thus, let $W_i \overset{\mathcal{L}}{=} \mathcal{U}\left([0,1]\right)$, then 
		$\proba\left(\left(U_i,\xi_i,\eta_i\right)\in \mathcal{D}\right)
		= \proba\left(W_i \leq \mathcal{V}\left(\mathcal{D}\right)\right).$
		We put $\xi_i$ and $\eta_i$ on the same axis as they do not depend one on the other so
		that the volume
		$\mathcal{V} \left( \left\{ U_i \leq G_Z^{-1}( \tilde{U} ), \xi_i \leq a \right\} \bigcup
		\left\{ U_i > G_Z^{-1}( \tilde{U} ), \eta_i \leq b \right\} \right)$
		is equal to the sum of the tow grey areas (see Figure~\ref{fig:couple-unif}).
		We deduce that
		\[	
		X = \sum_{i=1}^{K}\mathbbm{1}_{\left\{ W_i\leq b+(a-b)G_Z^{-1}(\tilde{U})\right\}}
		= \sum_{i=1}^{K}\mathbbm{1}_{\left\{ G_Z\left(\frac{W_i-b}{a-b}\right)\leq
			\tilde{U}\right\}}
		= \sum_{i=1}^{K}\mathbbm{1}_{\left\{G_X(W_i)\leq \tilde{U}\right\}},
		\]
		\noindent with \(G_X(w) = G_{Z,(a,b)}(w)\).
		We conclude that \eqref{eq:20191007_1} is satisfied.
	\end{proof}
	Proof of Proposition~\ref{prop:dyn-h}.
	\begin{proof}
		We first show~\eqref{eq:recursive-h-t0} and~\eqref{eq-cdf-H-1} for $h_{t,K}$,
		then the rest follows.
		
		At $t=1$, from equation~\eqref{def-h-t0-Bin} we get
		\begin{align*}
		\mathcal{L}\left(h_{1,K}|\xi^1_{0}=1,h_{-r+1,K}\right) &= \Bin\left(h_{-r+1,K},1\right) + \Bin\left(K-h_{-r+1,K},
		1-(1-q^+)^r\right)\\ 
		\mathcal{L}\left(h_{1,K}|\xi^1_{0}=0,h_{-r+1,K}\right) &= \Bin\left(h_{-r+1,K}, (1-q_{01}^-)^r\right).
		\end{align*}
		\noindent Applying twice Lemma~\ref{lem-still-BinMix} with \((a,b) = (1,1-(1-q^+)^r)\) and
		then \((a,b) = (1-(1-q_{01}^-)^r,0)\), we obtain using notation~\ref{not:G-ab}
		\begin{align*}
		\mathcal{L}\left(h_{1,K}|\xi^1_{0}=1,h_{-r+1,K}\right)	& \overset{\mathcal{L}}{=}  
		\BinMix\left(K,g_{-r+1,(1,1-(1-q^+)^r)}\right) \\ 
		\mathcal{L}\left(h_{1,K}|\xi^1_{0}=0,h_{-r+1,K}\right) &
		\overset{\mathcal{L}}{=}   \BinMix\left(K,g_{-r+1,(1-(1-q_{01}^-)^r,0)}\right).
		\end{align*}
		\noindent Thus,
		
		\noindent\( 
		\proba\left(h_{1,K}=j|h_{-r+1,K}\right)\)\\
		\noindent\(\begin{aligned}
		=&\proba\left(\xi^1_{0}=1\right)\proba(h_{1,K}=j|\xi^1_{0}=1,h_{-r+1,K})
		+
		\proba\left(\xi^1_{0}=0\right)\proba(h_{1,K}=j|\xi^1_{0}=0,h_{-r+1,K})\\ 
		=&
		\displaystyle \binom{K}{j}\int_{0}^{1}u^j(1-u)^{K-j}\big(f_N g_{-r+1,(1,1-(1-q^+)^r)}(du)
		+
		(1-f_N)g_{-r+1,(1-(1-q_{01}^-)^r,0)}(du)\big),
		\end{aligned}\)
		\\
		
		which enables to get~\eqref{eq:recursive-h-t0}.
		
		\noindent Now, assume that $h_{t,K}\overset{\mathcal{L}}{=}\BinMix(K,g_t)$,
		for some fixed \(t\geq 1\).
		Then, by equation~\eqref{def-h-Bin}
		\begin{align*} 
		\mathcal{L}\left(h_{t+1,K}|\xi^1_{t}=1,\ h_{t,K}\right) &= \Bin\left(K-h_{t,K},f_N q^+\right) + \Bin\left(h_{t,K},1-(1 - f_N) q_{10}^-\right), \\ \mathcal{L}\left(h_{t+1,K}|\xi^1_{t}=0,\ h_{t,K}\right) &= \Bin\left(h_{t,K},1-f_N q_{01}^-\right), 
		\end{align*} 
		where Binomials are independent conditionally on $h_{t,K}$.
		Applying twice Lemma~\ref{lem-still-BinMix} with \((a,b) = (1-(1 -
		f_N) q_{10}^-,f_N q^+)\) and \((a,b) = (1-f_Nq_{01}^-,0)\), we get
		\begin{align*} 
		\mathcal{L}\left(h_{t+1,K}|\xi^1_{t}=1\right)	& \overset{\mathcal{L}}{=}  
		\BinMix\left(K,g_{t,(1-(1 - f_N) q_{10}^-,f_N q^+)}\right) \\ 
		\mathcal{L}\left(h_{t+1,K}|\xi^1_{t}=0\right) &
		\overset{\mathcal{L}}{=}   \BinMix\left(K,g_{t,(1-f_Nq_{01}^-,0)}\right).
		\end{align*} 
		Hence, $h_{t+1,K}\overset{\mathcal{L}}{=}   \BinMix\left(K,f_N g_{t,(1-(1 - f_N) q_{10}^-,f_Nq^+)} + (1-f_N) g_{t,(1-f_Nq_{01}^-,0)}\right)$, and we deduce that 
		$h_{t+1,K}\overset{\mathcal{L}}{=}   \BinMix(K,g_{t+1})$ with $G_{t+1}(x)
		= \mathcal{R}(G_t)(x)$.
		
		For the processes $\left(h_{t,K}^y\right)_{t\geq 0}$, we proceed exactly with the 
		same method with the fact that $\xi_0^1 = y$ in Proposition~\ref{prop-Markov}.
	\end{proof}
	
	\subsection{Proof of Proposition~\ref{prop-R-contracting}}\label{app-R-contracting}
	\begin{proof}
		\textbf{1. The map \(\mathcal{R}\) is a contraction}
		
		\noindent Let
		$\Gamma_1,\Gamma_2\in F_{[0,1]}$. We recall that $\Lambda_1 = 1-(1-f_N)q_{10}^--f_Nq^+$,
		$\Lambda_0 = 1-f_Nq_{01}^-$.
		\noindent$\begin{aligned} 
		&\|\mathcal{R}(\Gamma_2) - \mathcal{R}(\Gamma_1)\|_{L^1(0,1)}\\
		&\leq \int_{0}^{1}f_N\left| \Gamma_2\left(\frac{u-f_N q^+}{\Lambda_1}\right)
		-\Gamma_1\left(\frac{u-f_N q^+}{\Lambda_1}\right)\right| 
		+(1-f_N)\left|
		\Gamma_2\left(\frac{u}{\Lambda_0}\right)-\Gamma_1\left(\frac{u}{\Lambda_0}\right)\right| du\\
		&=f_N \int_{f_N q^+}^{1-(1 - f_N) q_{10}^-}\left| \Gamma_2\left(\frac{(u-f_N q^+) }{\Lambda_1}\right)-\Gamma_1\left(\frac{(u-f_N q^+) }{\Lambda_1}\right)\right| du\\ 
		& \qquad \qquad + (1-f_N)\int_{0}^{\Lambda_0}\left| \Gamma_2\left(\frac{u}{\Lambda_0}\right)-\Gamma_1\left(\frac{u}{\Lambda_0}\right)\right| du
		\end{aligned}$\\
		\noindent$\begin{aligned}
		&=f_N \Lambda_1 \int_{0}^{1}\left| \Gamma_2\left(u\right)-\Gamma_1\left(u\right)\right| du
		+ (1-f_N)\Lambda_0\int_{0}^{1}\left| \Gamma_2\left(u\right)-\Gamma_1\left(u\right)\right| du\\ 
		&= \underbrace{\left(f_N \Lambda_1 + (1-f_N)\Lambda_0\right)}_{\lambda_1}\|\Gamma_2-\Gamma_1\|_{L^1(0,1)}. 
		\end{aligned}$\\
		As $\lambda_1 <1$, the map $\mathcal{R}$ acting on $F_{[0,1]}$ is strictly contracting
		in $L^1(0,1)$.
		
		\noindent	\textbf{2. Existence and uniqueness of a fixed point}
		
		\noindent	We now prove the second point of the Lemma.
		For all $\Gamma_0\in F_{[0,1]}$, by contraction of $\mathcal{R}$,
		$\left(\mathcal{R}^n\left(\Gamma_0\right)\right)_{n\geq 0}$ is a Cauchy sequence
		for the $L^1(0,1)$ norm.
		By completeness of $L^1(0,1)$, this sequence converges to some $\Gamma \in L^1(0,1)$. 
		It remains to prove that \(\Gamma\) can be chosen in \(F_{[0,1]}\).
		First, any limit $\Gamma$ is non decreasing almost everywhere.
		Define $G^*(x) = \lim\limits_{y \rightarrow x_+}\Gamma(y)$.
		The function \(G^*\) is c\`adl\`ag and satisfies for every \(x \leq 0\), \(G^*(x)=0\) and
		for every \(x \geq 1\), \(G^*(x)=1\). 
		Thus $G^*\in F_{[0,1]}$ and $\mathcal{R}(G^*)=G^*$.
		Finally, the uniqueness of $G^*$ is deduced from the fact that $\mathcal{R}$ is strictly contracting.
	\end{proof} 
	\subsection{Proof of Lemma~\ref{lem:tail-bin}}\label{app-proof-lemmas}
	\begin{proof}
		We use the method of \cite{chernoff_measure_1952}.
		Let $S_N$ be the sum of $X_1, X_2, \cdots , X_N$ which
		are independent Bernoulli random variables of parameter $p$.
		
		\noindent For all $\varepsilon \in (0,1), u\in \reels^+$,
		
		\noindent$\begin{aligned}
		\proba\left(S_N \geq Np(1+\varepsilon)\right)&
		= 
		\proba\left(\e^{uS_N} \geq \e^{Np(1+\varepsilon)u}\right)
		\leq 
		\frac{\esp\left(\e^{uS_N}\right)}{\e^{Np(1+\varepsilon)u}}
		= 
		\frac{\prod_{i=1}^{N}\esp\left(\e^{uX_i}\right)}{\e^{Np(1+\varepsilon)u}} 
		\\
		&\leq
		\frac{\left(1+p(\e^u-1)\right)^N}{\e^{Np(1+\varepsilon)u}}
		\leq 
		\frac{\e^{Np(\e^u-1)}}{\e^{Np(1+\varepsilon)u}} = \e^{Np(\e^u-1-(1+\varepsilon)u)}.
		\end{aligned}$
		\\
		
		\noindent The minimum of the last term is reached for $u=\log(1+\delta)$ so
		\[
		\proba\left(X>Np(1+\varepsilon)\right) 
		\leq
		\left(\frac{\e^{\varepsilon}}{(1+\varepsilon)^{1+\varepsilon}}\right)^{Np}
		=
		\exp\left( Np\big( \varepsilon - (1+\varepsilon)\log(1+\varepsilon)\big) \right).
		\]
		From the inequality, $\forall z>0,\ \log(1+z)\geq \frac{2z}{2+z}$,
		we obtain~\eqref{eq:ineq-bin-pos}. In order to show~\eqref{eq:ineq-bin-neg},
		we proceed with the same method and use the inequality $\log(1+z)\geq \frac{z}{2}\frac{2+z}{1+z}$
		whenever $-1<z\leq0$.
	\end{proof}
	\section*{Acknowledgements}
	I am indebted to the help from Etienne Tanr\'e and Romain Veltz.
	
	\noindent I am very grateful to two anonymous referees for valuable comments and suggestions which really helped improving the paper.
	
	\noindent This research has received funding from the European Union's Horizon 2020 Framework Programme
	for Research and Innovation under the Specific Grant Agreement No. 785907 (Human Brain Project SGA2). 
	\bibliographystyle{apalike}
	\bibliography{biblio-memory} 

\begin{thebibliography}{}

\bibitem[Amit and Fusi, 1994]{amit_learning_1994}
Amit, D.~J. and Fusi, S. (1994).
\newblock Learning in {Neural} {Networks} with {Material} {Synapses}.
\newblock {\em Neural Computation}, 6(5):957--982.

\bibitem[Amit and Mongillo, 2003]{amit_spike-driven_2003}
Amit, D.~J. and Mongillo, G. (2003).
\newblock Spike-{Driven} {Synaptic} {Dynamics} {Generating} {Working} {Memory}
  {States}.
\newblock {\em Neural Computation}, 15(3):565--596.

\bibitem[Amit and Huang, 2010]{amit_precise_2010}
Amit, Y. and Huang, Y. (2010).
\newblock Precise capacity analysis in binary networks with multiple coding
  level inputs.
\newblock {\em Neural computation}, 22(3):660--688.

\bibitem[Benna and Fusi, 2016]{benna_computational_2016}
Benna, M.~K. and Fusi, S. (2016).
\newblock Computational principles of synaptic memory consolidation.
\newblock {\em Nature Neuroscience}, 19(12):1697--1706.

\bibitem[Brunel et~al., 1998]{brunel1998slow}
Brunel, N., Carusi, F., and Fusi, S. (1998).
\newblock Slow stochastic hebbian learning of classes of stimuli in a recurrent
  neural network.
\newblock {\em Network: Computation in Neural Systems}, 9(1):123--152.

\bibitem[Chernoff, 1952]{chernoff_measure_1952}
Chernoff, H. (1952).
\newblock A {Measure} of {Asymptotic} {Efficiency} for {Tests} of a
  {Hypothesis} {Based} on the sum of {Observations}.
\newblock {\em The Annals of Mathematical Statistics}, 23(4):493--507.

\bibitem[Dubreuil et~al., 2014]{dubreuil_memory_2014}
Dubreuil, A.~M., Amit, Y., and Brunel, N. (2014).
\newblock Memory capacity of networks with stochastic binary synapses.
\newblock {\em PLoS computational biology}, 10(8):e1003727.

\bibitem[Elliott, 2014]{elliott_memory_2014}
Elliott, T. (2014).
\newblock Memory {Nearly} on a {Spring}: {A} {Mean} {First} {Passage} {Time}
  {Approach} to {Memory} {Lifetimes}.
\newblock {\em Neural Computation}, 26(9):1873--1923.

\bibitem[Fusi and Abbott, 2007]{fusi_limits_2007}
Fusi, S. and Abbott, L. (2007).
\newblock Limits on the memory storage capacity of bounded synapses.
\newblock {\em Nature neuroscience}, 10(4):485--493.

\bibitem[Fusi et~al., 2005]{fusi_cascade_2005}
Fusi, S., Drew, P.~J., and Abbott, L. (2005).
\newblock Cascade {Models} of {Synaptically} {Stored} {Memories}.
\newblock {\em Neuron}, 45(4):599--611.

\bibitem[Gardner and {~Derrida}, 1988]{gardner1988optimal}
Gardner, E. and {~Derrida}, B. (1988).
\newblock Optimal storage properties of neural network models.
\newblock {\em Journal of Physics A: Mathematical and general}, 21(1):271.

\bibitem[Hopfield, 1982]{hopfield_neural_1982}
Hopfield, J.~J. (1982).
\newblock Neural networks and physical systems with emergent collective
  computational abilities.
\newblock {\em Proceedings of the national academy of sciences},
  79(8):2554--2558.

\bibitem[Huang and Amit, 2011]{huang_capacity_2011}
Huang, Y. and Amit, Y. (2011).
\newblock Capacity analysis in multi-state synaptic models: a retrieval
  probability perspective.
\newblock {\em Journal of Computational Neuroscience}, 30(3):699--720.

\bibitem[Miller, 2012]{miller_neural_2012}
Miller, A. (2012).
\newblock {\em Neural network models for {Recognition} {Memory}}.
\newblock PhD thesis, Hebrew University of Jerusalem.

\bibitem[Romani et~al., 2008]{romani_optimizing_2008}
Romani, S., Amit, D.~J., and Amit, Y. (2008).
\newblock Optimizing one-shot learning with binary synapses.
\newblock {\em Neural computation}, 20(8):1928--1950.

\bibitem[Roxin and Fusi, 2013]{roxin_efficient_2013}
Roxin, A. and Fusi, S. (2013).
\newblock Efficient {Partitioning} of {Memory} {Systems} and {Its} {Importance}
  for {Memory} {Consolidation}.
\newblock {\em PLoS Computational Biology}, 9(7):e1003146.

\bibitem[Sommer and {Dayan}, 1998]{sommer_bayesian_1998}
Sommer, F. and {Dayan}, P. (1998).
\newblock Bayesian retrieval in associative memories with storage errors.
\newblock {\em IEEE Transactions on Neural Networks}, 9(4):705--713.

\bibitem[Standing, 1973]{standing_learning_1973}
Standing, L. (1973).
\newblock Learning 10000 pictures.
\newblock {\em Quarterly Journal of Experimental Psychology}, 25(2):207--222.

\bibitem[{Willshaw} et~al., 1969]{willshaw_non-holographic_1969}
{Willshaw}, D.~J., {Buneman}, O.~P., and {Longuet-Higgins}, H.~C. (1969).
\newblock Non-{Holographic} {Associative} {Memory}.
\newblock {\em Nature}, 222:960.

\bibitem[Zenke, 2014]{zenke_memory_2014}
Zenke, F. (2014).
\newblock {\em Memory formation and recall in recurrent spiking neural
  networks}.
\newblock PhD thesis, Ecole Polytechnique Federale de Lausanne.

\end{thebibliography}
\end{document}